\newtheorem{remark}{Remark}
\newtheorem{corollary}{Corollary}
\newtheorem{definition}{Definition}
\newcommand{\remove}[1]{{}}
\newcommand{\cut}[1]{}
\newcommand{\nnb}{\nonumber \\}
\newcommand{\eq}[1]{\begin{align}#1\end{align}}
\newcommand{\ba}{\left[ \begin{array}}
\newcommand{\ea}{\\ \end{array} \right]}
\newcommand{\vf}{{\mathbf{f}}}
\newcommand{\vr}{{\mathbf{r}}}
\newcommand{\vs}{{\mathbf{s}}}
\newcommand{\vu}{{\mathbf{u}}}
\newcommand{\vx}{{\mathbf{x}}}
\newcommand{\vy}{{\mathbf{y}}}
\newcommand{\vz}{{\mathbf{z}}}
\newcommand{\vS}{{\mathbf{S}}}
\newcommand{\vZ}{{\mathbf{Z}}}
\newcommand{\cA}{{\mathcal{A}}}
\newcommand{\cB}{{\mathcal{B}}}
\newcommand{\cE}{{\mathcal{E}}}
\newcommand{\cF}{{\mathcal{F}}}
\newcommand{\cG}{{\mathcal{G}}}
\newcommand{\cH}{{\mathcal{H}}}
\newcommand{\cI}{{\mathcal{I}}}
\newcommand{\cL}{{\mathcal{L}}}
\newcommand{\cN}{{\mathcal{N}}}
\newcommand{\cO}{{\mathcal{O}}}
\newcommand{\cP}{{\mathcal{P}}}
\newcommand{\cQ}{{\mathcal{Q}}}
\newcommand{\cS}{{\mathcal{S}}}
\newcommand{\cT}{{\mathcal{T}}}
\newcommand{\cV}{{\mathcal{V}}}
\newcommand{\cZ}{{\mathcal{Z}}}
\newcommand{\RR}{\mathbb{R}}
\newcommand{\EE}{\mathbb{E}}
\newcommand{\St}{{\mathrm{subject~to}}} 
\newcommand{\grad}{{\nabla}}    
\newcommand{\tr}{{\mathrm{tr}}} 
\newcommand{\prox}{\mathbf{prox}}
\DeclareMathOperator*{\argmin}{arg\,min}
\DeclareMathOperator*{\Min}{minimize}
\newcommand{\bc}{\begin{center}}
\newcommand{\ec}{\end{center}}
\newcommand{\bdm}{\begin{displaymath}}
\newcommand{\edm}{\end{displaymath}}
\newcommand{\beq}{\begin{equation}}
\newcommand{\eeq}{\end{equation}}
\newcommand{\bfl}{\begin{flushleft}}
\newcommand{\efl}{\end{flushleft}}
\newcommand{\bt}{\begin{tabbing}}
\newcommand{\et}{\end{tabbing}}
\newcommand{\beqn}{\begin{align}}
\newcommand{\eeqn}{\end{align}}
\newcommand{\beqs}{\begin{align*}} 
\newcommand{\eeqs}{\end{align*}}  
\newtheorem{assumption}{Assumption}
\newtheorem{theorem}{{Theorem}}
\newtheorem{lemma}{{Lemma}}
\newtheorem{proposition}{Proposition}
\def\Zint{{\mathchoice{\setbox1=\hbox{\sf Z}\copy1\kern-.75\wd1\box1}
{\setbox1=\hbox{\sf Z}\copy1\kern-.75\wd1\box1}
{\setbox1=\hbox{\scriptsize\sf Z}\copy1\kern-.75\wd1\box1}
{\setbox1=\hbox{\scriptsize\sf Z}\copy1\kern-.75\wd1\box1}}}
\def\hlinewd#1{%
  \noalign{\ifnum0=`}\fi\hrule \@height #1 \futurelet
   \reserved@a\@xhline}
\title{Decentralized Consensus Optimization with Asynchrony and Delays}
\author{\IEEEauthorblockN{Tianyu Wu, Kun Yuan, Qing Ling,  Wotao Yin, and Ali H. Sayed \\}
        \vspace{-0.4cm}

        \thanks{This work was supported in part by NSF grants CCF-1524250, DMS-1317602, ECCS-1407712, and ECCS-1462397, NSF China grant 61573331, and DARPA project N66001-14-2-4029. A short version of this work appeared in the conference publication \cite{wu2016decentralized}.

        T. Wu and W. Yin are with the Department of Mathematics, University of California, Los Angeles, CA90095, USA. K. Yuan and A. H. Sayed are with the Department of Electrical Engineering, University of California, Los Angeles, CA90095, USA. Q. Ling is with the Department of Automation, University of Science and Technology of China, Hefei, Anhui 230026, China. Emails: \{wuty11,kunyuan,wotaoyin,sayed\}@ucla.edu,~qingling@mail.ustc.edu.cn

        }

        }
\renewcommand{\footnoterule}{%
        \kern -1pt
        \hrule width 1.1in height 1.2pt
        \kern 2pt
}
\begin{document}

\def\helvetica{phvr7t.tfm}
\def\helveticaoblique{phvro7t.tfm}
\def\helveticabold{phvb7t.tfm}
\def\helveticaboldoblique{phvbo7t.tfm}

\font\sfb=\helveticabold
=\helveticaboldoblique
\maketitle

\begin{abstract}
We propose an asynchronous, decentralized algorithm for consensus optimization. The algorithm runs over a network in which the agents communicate with their neighbors and perform local computation.

In the proposed algorithm, each agent can compute and communicate independently at different times, for different durations, with the information it has even if the latest information from its neighbors is not yet available. Such an asynchronous algorithm reduces the time that agents would otherwise waste idle because of communication delays or because their neighbors are slower. It also eliminates the need for a global clock for synchronization.

Mathematically, the algorithm involves both primal and dual variables, uses fixed step-size parameters, and provably converges to the exact solution {under a random agent assumption and both bounded and unbounded delay assumptions}. When running synchronously, the algorithm performs just as well as existing competitive synchronous algorithms such as PG-EXTRA, which diverges without synchronization. Numerical experiments confirm the theoretical findings and illustrate the performance of the proposed algorithm.
%
%
\end{abstract}
\begin{keywords}
  decentralized, asynchronous, delay, consensus optimization.
\end{keywords}

%

\section{Introduction and Related Work}\label{sec:introduction}
This paper considers a connected network of $n$ agents that cooperatively solve the \textit{consensus optimization} problem
\eq{
&\Min_{x\in \RR^p}\quad \bar{f}(x):=\frac{1}{n}\sum_{i=1}^{n} f_i(x), \nnb
&\text{where}~ f_i(x):=s_i(x) + r_i(x),~i=1,\ldots,n.\label{prob-general}
}
We assume that the functions $s_i$ and $r_i: \RR^p \rightarrow \RR$ are convex \textit{differentiable} and possibly \textit{nondifferentiable} functions, respectively. We call $f_i=s_i+r_i$  a \emph{composite} objective function. Each $s_i$ and $r_i$ are kept private by agent $i=1,2,\cdots,n$, and $r_i$ often serves as the regularization term or the indicator function to a certain constraint on the optimization variable $x \in \mathbb{R}^p$ that is common to all the agents. 
Decentralized  algorithms   rely on agents' local computation, as well as  the information exchange between agents. Such algorithms are generally  robust to failure of critical relaying agents and  scalable with network size.

In decentralized {\color{black}computation, especially with heterogeneous agents or due to processing and communication delays, it can be inefficient or impractical} to synchronize  multiple nodes and links. To see this, let {\color{black}$x^{i, k}\in\RR^p$} be the local variable of agent $i$ at iteration $k$, and let {\color{black}$X^k=[x^{1,k},x^{2,k},\ldots,x^{n,k}]^\top\in\RR^{n\times p}$} collect all local variables, where $k$ is the iteration index.
 In a synchronous implementation, in order to perform an iteration that updates the entire $X^{k}$ to $X^{k+1}$,  all agents will need to wait for the slowest agent or be held back by the slowest communication. {\color{black}In addition, a clock coordinator is necessary for synchronization, which can be expensive and demanding to maintain in a large-scale decentralized network.}
	
{\color{black}Motivated by these considerations, this paper proposes an asynchronous decentralized algorithm where actions by agents are not required to run synchronously. To allow agents to  compute and communicate at different moments, for different durations, the proposed algorithm introduces delays into the iteration --- the update of $X^k$ can rely on delayed information received from neighbors. The information may be several-iteration out of date. Under uniformly bounded (but  arbitrary)  delays and that {the next update is done by a random agent}
	, this paper will show that the sequence $\{X^{k}\}_{k\ge 0}$ converges to a solution to Problem \eqref{prob-general} with probability one.}

What can cause delays? Clearly, communication latency introduces delays. Furthermore, as agents start and finish their iterations independently, one agent may have updated its variables
 while its neighbors are working on their current iterations that still use old (i.e., delayed) copies of those variables; this situation is illustrated in Fig. \ref{fig:delay}. {\color{black}For example,
 	before iteration $3$, agents $1$ and $2$ have finished updating their local variables $x^{1,2}$ and $x^{2,1}$ respectively, but agent $3$ is still relying on delayed neighboring variables $\{x^{1,0}, x^{2,0}\}$, rather than the updated variables $\{x^{1,2},x^{2,1}\}$,  to update $x^{3,3}$.} Therefore, both computation and communication cause delays.


\begin{figure}
        \includegraphics[scale=0.4]{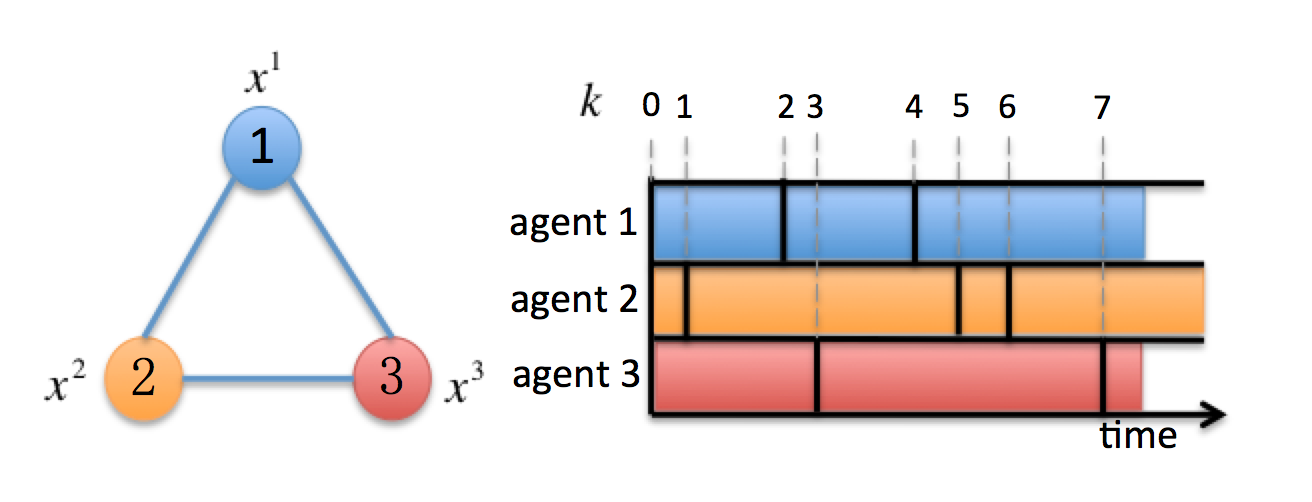}
        \caption{Network and uncoordinated computing.}
        \label{fig:delay}
\end{figure}



\subsection{Relationship to certain synchronous algorithms}
The proposed algorithm, if running synchronously, can be algebraically reduced to PG-EXTRA \cite{shi2015proximal}; they  solve problem \eqref{prob-general} with a fixed step-size parameter and are typically faster than algorithms using diminishing step sizes.
Also, both algorithms generalize EXTRA \cite{shi2014linear}, which only deals with differentiable functions.
{However, divergence (or convergence to wrong solutions) can be observed when one runs EXTRA and PG-EXTRA in the \emph{asynchronous} setting,} 
where the proposed algorithm works correctly. {The proposed algorithm in this paper must use additional dual variables that are associated with edges, thus leading to a moderate cost of updating and communicating the dual variables.}

The proposed algorithm is also very different from decentralized
ADMM \cite{schizas2008consensus,shi2014linear,chang2015multi} except that both algorithms can use fixed
parameters. Distributed consensus methods \cite{nedic2009distributed,yuan2013convergence} that rely on fixed step-sizes can also converge fast, albeit only to {\em approximate}  solutions \cite{nedic2009distributed,yuan2013convergence}.

%

{\color{black}Several useful diffusion strategies \cite{chen2013distributed,chen2015learning,chen2015learning2,sayed2014adaptive,vlaski2015proximal,vlaski2016diffusion} have also been developed for solving {\em stochastic} decentralized optimization problems where realizations of random cost functions are observed at every iteration. To keep continuous learning alive, these strategies also employ fixed step-sizes, and they converge fast to a small neighborhood of the true solution. The diffusion strategies operate on the primal domain, but they can outperform some primal-dual strategies in the stochastic setting due to the influence of gradient noise \cite{towfic2015stability}. These studies focused on synchronous implementations. Here, our emphasis is on asynchronous networks, where delays are present and become critical, and also on deterministic optimization where convergence to the true solution of Problem \eqref{prob-general} is desired.}

%
\vspace{-3mm}
\subsection{Related decentralized algorithms under different settings}
Our setting of asynchrony is different from \textit{randomized single activation}, which is assumed for the randomized gossip algorithm \cite{boyd2006randomized,dimakis2010gossip}. Their setting activates only one edge at a time and does \emph{not} allow any delay. That is, before each activation, computation and communication associated with previous activations must be completed, and only one edge in each neighborhood can  be activated at any time. Likewise, our setting is different from \textit{randomized multi-activation} such as \cite{kar2008sensor,fagnani2008randomized} for consensus averaging, and
 \cite{nedic2011asynchronous,iutzeler2013asynchronous,lorenzo2012decentralized,wei20131,hong2015stochastic,bianchi2016coordinate,zhang2015bi} for consensus optimization, which activate multiple edges each time and still do not allow any delay. These algorithms can be alternatively viewed as synchronous algorithms running in a sequence of  varying subgraphs. Since each iteration waits for the slowest agent or longest communication, a certain coordinator or global clock is needed.

 {\color{black} Our setting is also different from \cite{nedic2015distributed,zhao2015asynchronous,zhao2015asynchronous2,zhao2015asynchronous3}, in which other sources of asynchronous behavior in networks are allowed, such as {different arrival times of data at the agents}, random on/off activation of edges and neighbors, random on/off updates by the agents, and random link failures, etc. Although the results in these works provide notable evidence for the resilience of decentralized algorithms to network uncertainties, they do not consider delays.
}
%

We also distinguish our setting  from the \emph{fixed communication delay} setting \cite{tsianos2011distributed,tsianos2012distributed}, where the information passing through each edge takes a fixed number of iterations to arrive. Different edges can have different such numbers, and agents can compute with only the information they have, instead of waiting. As demonstrated in \cite{tsianos2011distributed}, this  setting can be transferred into  \emph{no communication delay}  by replacing an edge with a chain of dummy nodes. Information passing through a chain of $\tau$ dummy  nodes simulates an edge with a $\tau$-iteration delay. The computation in this setting is still synchronous, so a coordinator or global clock is still needed. \cite{tsianos2011distributed,liu2011distributed} consider \emph{random communication delays} in their setting. However, they are only suitable for consensus averaging, not generalized for the optimization problem \eqref{prob-general}.
	

Our setting is identical to the setting outlined in Section 2.6 of \cite{Peng_2015_AROCK}, the \emph{asynchronous decentralized ADMM}. Our algorithm, however, handles composite functions and avoids solving possibly complicated subproblems.
The recent paper \cite{eisen2016decentralized} also considers both random computation and communication delays. However, as the focus of that paper is a quasi-Newton method, its analysis assumes twice continuously differentiable functions and thus excludes nondifferentiable functions $s_i$ in our problem. In fact, \cite{eisen2016decentralized} solves a different problem and cannot be directly used to solve our Problem \eqref{prob-general} even if its objective functions are smooth. It can, however, solve an approximation problem of \eqref{prob-general} with either reduced solution accuracy or low speed. 

{Our setting is also related with \cite{cannelli2016asynchronous}, in which an asynchronous framework is proposed to solve a composite non-convex optimization problem, where delays and inconsistent reads are allowed. However, the framework in \cite{cannelli2016asynchronous} is designed for parallel computing within one single agent rather than a multi-agent network, which is a fundamental difference with this paper. A summary of various decentralized asynchronous algorithms is listed in Table \ref{literature comparision}. 
}


\begin{table*}
	\setlength{\belowcaptionskip}{-4mm}
	\caption{\small {A comparison among asynchronous decentralized algorithms} \vspace{-0mm}}
	\centering
		\begin{tabular}{|c|c|c|c|c|c|} 
			\hline \scriptsize
			Algorithm & {\scriptsize Synchronization Cost} & {\scriptsize {Allow delays in Recursions}} & {\scriptsize Idle Time} &\scriptsize Cost Function & \scriptsize Exact Convergence$^*$\\
			\hline 
			\scriptsize Gossip\cite{boyd2006randomized,dimakis2010gossip} & \scriptsize Reduced & \scriptsize No & \scriptsize Yes & \scriptsize Average Consensus & \scriptsize Yes\\
			\hline
			\scriptsize Randomized Consensus\cite{kar2008sensor,fagnani2008randomized} & \scriptsize Reduced & \scriptsize No & \scriptsize Yes & \scriptsize Average Consensus & \scriptsize Yes\\
			\hline
			\scriptsize \scriptsize Randomized DGD\cite{nedic2011asynchronous,nedic2015distributed} & \scriptsize Reduced & \scriptsize No & \scriptsize Yes & \scriptsize Aggregate Optimization & \scriptsize No\\
			\hline
			\scriptsize Asynchronous
			 Diffusion\cite{zhao2015asynchronous,zhao2015asynchronous2,zhao2015asynchronous3} & \scriptsize Reduced & \scriptsize No & \scriptsize Yes & \scriptsize Aggregate stochastic Optimization & \scriptsize No\\
			\hline
			\scriptsize Randomized ADMM\cite{iutzeler2013asynchronous,wei20131,zhang2015bi} & \scriptsize Reduced & \scriptsize No & \scriptsize Yes & \scriptsize Aggregate Optimization & \scriptsize Yes\\
			\hline
			\scriptsize Randomized Prox-DGD\cite{hong2015stochastic} & \scriptsize Reduced & \scriptsize No & \scriptsize Yes & \scriptsize Composite Aggregate Optimization & \scriptsize No\\
			\hline
			{\scriptsize Asynchronous Newton\cite{eisen2016decentralized}} & \scriptsize{None} & \scriptsize{Yes} & \scriptsize{No} & \scriptsize{Aggregate Optimization} & \scriptsize{No}\\
			\hline
			{\scriptsize Asynchronous ADMM\cite{Peng_2015_AROCK} } & \scriptsize{None} & \scriptsize{Yes} & \scriptsize{No} & \scriptsize{Aggregate Optimization} & \scriptsize{Yes}\\
			\hline
			\scriptsize \textbf{\scriptsize Asynchronous Prox-DGD \eqref{dpg-asyn}} & \scriptsize \textbf{None} & \scriptsize \textbf{Yes} & \scriptsize \textbf{No} & \scriptsize \textbf{\scriptsize Composite Aggregate Optimization} & \scriptsize \textbf{No}\\
			\hline
			\textbf{\scriptsize Proposed Alg. \ref{alg:asyn}} & \scriptsize \textbf{None} & \scriptsize \textbf{Yes} & \scriptsize \textbf{No} & \scriptsize \textbf{Composite Aggregate Optimization} & \scriptsize \textbf{Yes}\\
			\hline 
			
			
		\end{tabular}
		\label{literature comparision}
		\begin{tablenotes}
			\item[*]$^*$ {Convergence to the exact optimal solution when step size is constant.}
		\end{tablenotes}
\end{table*}


\vspace{-1mm}
\subsection{Contributions}
This paper introduces an asynchronous, decentralized algorithm for problem \eqref{prob-general} that provably converges to an optimal solution assuming that the next update is performed by a random agent
and that communication is subject to arbitrary {and possibly unbounded} delays. If running synchronously, the proposed algorithm is as fast as the competitive PG-EXTRA algorithms except, for asynchrony, the proposed algorithm involves updating and communicating additional edge variable. 

Our asynchronous setting is considerably less restrictive than the settings  under which existing {non-synchronous} or {non-deterministic} decentralized algorithms are proposed. In our setting,  the computation and communication of agents are uncoordinated. A global clock is not needed.

{\color{black}
Technical contributions are also made. We borrow ideas from monotone operator theory and primal-dual operator splitting to derive the proposed algorithms in compact steps. To establish convergence under delays, we cannot follow the existing analysis of PG-EXTRA \cite{shi2015proximal}; instead, motivated by \cite{Peng_2015_AROCK,peng2016coordinate}, a new non-Euclidean metric is introduced to absorb the delays.
%
In particular, the foundation of the analysis is not the monotonicity of the objective value sequence. Instead, the proposed approach establishes monotonic conditional expectations of certain distances to the solution. We believe this new analysis can extend to a general set of primal-dual algorithms beyond decentralized optimization.
}

\vspace{-2mm}
\subsection{Notation}\label{sec:notation}
Each agent $i$ holds a local variable $x^i \in \RR^p$, whose value at iteration $k$ is denoted by $x^{i,k}$. {We introduce variable $X$ to stack all local variables $x^{i}$:
\eq{
        X := \ba{ccc}
        \rule[0.8mm]{0.5cm}{0.3mm} & \left(x^{1}\right)^\top & \rule[0.8mm]{0.5cm}{0.3mm} \vspace{1mm} \\
        & \vdots & \\
        \rule[0.8mm]{0.5cm}{0.3mm} & \left(x^{n}\right)^\top & \rule[0.8mm]{0.5cm}{0.3mm}
        \ea \in \RR^{n \times p}.\label{notation-vx}
}}

In this paper, we denote the $j$-th row of a matrix $A$ as $A_j$. Therefore, for $X$ defined in \eqref{notation-vx}, it holds that $X_i=(x^i)^\top$, which indicates that the $i$th row of $X$ is the local variable $x^i \in \RR^p$ of agent $i$. Now we define functions
\eq{\label{notation-vf}
        \vs(X):= \sum_{i=1}^{n} s_i(x^i),\quad \vr(X):= \sum_{i=1}^{n} r_i(x^i),
}
as well as
\eq{\label{fx}
\vf(X) := \sum_{i=1}^{n}f_i(x^i)=\vs(X) + \vr(X).
} The gradient of $\vs(X)$ is defined as
\eq{\label{notation-grad-s}
\grad \vs(X) := \ba{ccc}
\rule[0.8mm]{0.5cm}{0.2mm} & \big(\grad s_1(x^1 )\big)^\top & \rule[0.8mm]{0.5cm}{0.2mm} \vspace{1mm} \\
& \vdots & \\
\rule[0.8mm]{0.5cm}{0.2mm} & \big(\grad s_n(x^n)\big)^\top & \rule[0.8mm]{0.5cm}{0.2mm}
\ea \in \RR^{n\times p}.
}
The inner product on $\RR^{n\times p}$ is defined as
$
\langle
X,\widetilde{X}\rangle:=\tr(X^\top\widetilde{X})\hspace{-1mm}=\hspace{-1mm}\sum_{i=1}^n(x^i)^\top \widetilde{x}^i,
$
where $X, \widetilde{X}$ are arbitrary matrices.

\section{Algorithm Development}

Consider a strongly connected network $\cG=\{\cV, \cE\}$ with  agents $\cV=\{1,2,\cdots, n\}$ and   \emph{undirected} edges $\cE=\{1,2,\cdots,m\}$. By convention, all edges $(i,j)\in\cE$ obey $i<j$. To each edge $(i,j)\in \cE$, we assign a weight $w_{ij}> 0$, which is used by agent $i$ to scale the data $x^j$ it receives from agent $j$. Likewise, let $w_{ji}=w_{ij}$ for agent $j$. If $(i,j)\notin \cE$, then $w_{ij}=w_{ji}=0$. {For each  $i$, we let
$
	\cN_i := \{j|(i,j)\in \cE \mbox{ or } (j,i)\in \cE\} \cup \{i\}
$
denote the set of neighbors of agent $i$ (including agent $i$ itself). We also let 
$
	\cE_i := \{(i,j)|(i,j)\in \cE \mbox{ or } (j,i)\in \cE\}
$
denote the set of all edges connected to $i$. Moreover, we also assume that there exists at least one agent index $i$ such that $w_{ii}>0$.}


%
%

Let $W=[w_{ij}]\in \RR^{n\times n}$ denote the weight matrix, which is  symmetric and doubly stochastic. Such $W$ can be generated through the maximum-degree or Metropolis-Hastings rules \cite{sayed2014adaptation}. It is easy to verify that
$\mathrm{null}\{I-W\}=\mathrm{span}\{\mathds{1}\}$.

The proposed algorithm involves a matrix $V$, which we now define. Introduce the diagonal matrix $D\in \RR^{m\times m}$ with diagonal entries $D_{e,e}=\sqrt{w_{ij}/2}$ for each edge $e=(i,j)$. Let $C=[c_{ei}]\in \RR^{m\times n}$ be the incidence matrix of $\cG$:
%
{\color{black}\eq{
\label{inci-mat}
c_{ei} =
\begin{cases}
       \begin{array}{ll}
               +1,\ &\text{$i$ is the lower indexed agent connected to $e$}, \\
               -1,\ &\text{$i$ is the higher indexed agent connected to $e$}, \\
               0,\ &\text{otherwise}.
       \end{array}
\end{cases}
}}With incidence matrix \eqref{inci-mat},
we further define
\eq{
V:=DC \in \RR^{m\times n}. \label{V-definition}
}
as the  scaled incidence matrix. It is easy to verify:
\begin{proposition}[Matrix factorization identity] \label{lm-matrix-factorization}
	When $W$ and $V$ is generated according to the above description, we have $V^\top V=(I-W)/2.$

\end{proposition}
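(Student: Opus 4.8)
The plan is to compute $V^\top V$ directly from the definition $V = DC$ and compare it entrywise with $(I-W)/2$. Since $D$ is diagonal with $D_{e,e} = \sqrt{w_{ij}/2}$ for edge $e=(i,j)$, we have $V^\top V = C^\top D^\top D C = C^\top D^2 C$, where $D^2$ is the diagonal matrix with entries $D^2_{e,e} = w_{ij}/2$. So everything reduces to understanding the weighted Gram matrix $C^\top D^2 C$ of the incidence matrix, which is a scaled graph Laplacian.

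The key steps, in order. First, I would write out the $(i,j)$ entry of $C^\top D^2 C$ as $\sum_{e\in\cE} c_{ei}\, D^2_{e,e}\, c_{ej}$. For the off-diagonal case $i \neq j$: the only edge $e$ that can contribute is the edge connecting $i$ and $j$ (if it exists), and for that edge $c_{ei}c_{ej} = -1$ since one endpoint is the lower-indexed and the other the higher-indexed agent; hence the entry equals $-w_{ij}/2$ if $(i,j)\in\cE$ (or $(j,i)\in\cE$) and $0$ otherwise — in both cases $-w_{ij}/2$, matching $-(W)_{ij}/2 = ((I-W)/2)_{ij}$ since $(I)_{ij}=0$. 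Second, for the diagonal case $i=j$: $c_{ei}^2 = 1$ for every edge $e$ incident to $i$ and $0$ otherwise, so the entry equals $\sum_{e\in\cE_i} w_{e}/2 = \frac{1}{2}\sum_{j\in\cN_i,\, j\neq i} w_{ij}$. Third, I would invoke the fact that $W$ is doubly stochastic, so its $i$th row sums to $1$: $\sum_{j} w_{ij} = 1$, i.e. $w_{ii} + \sum_{j\neq i} w_{ij} = 1$, which gives $\sum_{j\neq i} w_{ij} = 1 - w_{ii}$. Therefore the diagonal entry of $C^\top D^2 C$ equals $(1 - w_{ii})/2 = ((I-W)/2)_{ii}$. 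Combining the off-diagonal and diagonal computations establishes $V^\top V = (I-W)/2$ entrywise.

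I do not expect a genuine obstacle here — the statement is the elementary identity that a scaled incidence matrix factors a scaled Laplacian — so the only care needed is bookkeeping: making sure the sign convention in \eqref{inci-mat} produces $c_{ei}c_{ej}=-1$ on a shared edge (it does, because the two endpoints always receive opposite signs), and making sure the ``$w_{ij}$ vs.\ $w_{ji}$'' symmetry and the $(i,j)\in\cE \Leftrightarrow i<j$ convention are used consistently so that the weight attached to edge $e=(i,j)$ is unambiguously $w_{ij}=w_{ji}$. The doubly-stochastic property is the only nontrivial input, and it is exactly what converts the degree-like diagonal sum into $1 - w_{ii}$, matching the $I - W$ structure. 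The assumption that some $w_{ii}>0$ plays no role in this particular identity (it is used elsewhere) and can be ignored in the proof.
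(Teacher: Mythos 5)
Your proof is correct and is exactly the verification the paper leaves to the reader (the paper states the proposition with only ``It is easy to verify'' and gives no written proof): the entrywise computation of $C^\top D^2 C$, the sign cancellation $c_{ei}c_{ej}=-1$ on a shared edge, and the use of row-stochasticity to turn the diagonal degree sum into $(1-w_{ii})/2$ are all the right ingredients. No gaps.
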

\subsection{Problem formulation}
\label{sec-primal-dual algrithm}
Let us reformulate Problem~\eqref{prob-general}.
First, it is equivalent to
\eq{\label{prob-genral-local}
        \Min_{x^1,\cdots, x^n\in \RR^p} \quad& \sum_{i=1}^{n} s_i(x^i) + \sum_{i=1}^{n} r_i(x^i), \nnb
        \St \quad & x^1=x^2=\cdots=x^n.
}
Since $\mathrm{null}\{I-W\}=\mathrm{span}\{\mathds{1}\}$ and recall the definition of $X$ in \eqref{notation-vx}, Problem \eqref{prob-genral-local} is equivalent to
\eq{\label{prob-compact-I-W}
        \Min_{X\in \RR^{n\times p}}&\quad \vs(X) + \vr(X), \nnb
        \St&\quad (I-W) X = 0.
}
{\color{black}
By Proposition \ref{lm-matrix-factorization}, we have $(I-W)X=0 \Rightarrow V^\top V X=0 \Rightarrow X^\top V^\top V X =0 \Rightarrow V X = 0$. On the other hand, $V X = 0 \Rightarrow V^\top V X = 0 \Rightarrow (I - W)X = 0$. 
Therefore, we conclude that
$
(I - W)X = 0 \Leftrightarrow VX = 0.
$
Therefore, Problem \eqref{prob-compact-I-W} is further equivalent to
\eq{\label{prob-compact-V}
        \Min_{\vx\in \RR^{n\times p}}&\quad \vs(X) + \vr(X), \nnb
        \St&\quad V X = 0.
}
{\color{black}
Now we denote 
\eq{
	\label{notation-vy}
	Y := \ba{ccc}
	\rule[0.8mm]{0.5cm}{0.3mm} & \left(y^{1}\right)^\top & \rule[0.8mm]{0.5cm}{0.3mm} \vspace{1mm} \\
	& \vdots & \\
	\rule[0.8mm]{0.5cm}{0.3mm} & \left(y^{m}\right)^\top & \rule[0.8mm]{0.5cm}{0.3mm}
	\ea \in \RR^{m \times p}
}}as the dual variable, Problem \eqref{prob-compact-V} can be reformulated into the saddle-point problem
\begin{equation}
\max_{Y\in\RR^{m\times p}}\min_{X\in\RR^{n\times p}} \vs(X)+\vr(X)+\frac{1}{\alpha}\langle Y,VX\rangle,\label{prob:saddle}
\end{equation}
where $\alpha>0$ is a constant parameter. {Notice that a similar formulation using the incidence and Laplacian matrices was employed in \cite{towfic2015stability} to derive primal-dual distributed optimization strategies over networks.}
\subsection{Synchronous algorithm}
\label{sec:syn-extra}

Problem~\eqref{prob:saddle} can be solved iteratively by the  primal-dual algorithm that is adapted from~\cite{condat2013primal}\cite{vu2013splitting}: 
\begin{equation}
\begin{cases}
X^{k\hspace{-0.2mm}+\hspace{-0.2mm}1}\hspace{-0.8mm}=\hspace{-0.8mm}\prox_{\alpha\vr}[X^k\hspace{-1mm}-\hspace{-1mm}\alpha\nabla\vs(X^k)\hspace{-1mm}-\hspace{-1mm}V^\top(2Y^{k+1}\hspace{-1mm}-\hspace{-1mm}Y^k)],\\
Y^{k+1}\hspace{-0.6mm}=\hspace{-0.6mm}Y^k+VX^k, \label{alg:cv}
\end{cases}
\end{equation}
where the proximal operator is defined as
{\color{black}
\eq{\label{prox-operator}
	\prox_{\alpha\vr}(U):=\argmin_{X\in \RR^{n\times p}} \big\{\vr(X)+\frac{1}{2\alpha}\|X-U\|^2_{\rm F}\big\}.
}}Next, in the $X$-update, we eliminate  $Y^{k+1}$ and use $I-2V^\top V=W$ to arrive at:
\begin{equation}
\begin{cases}
X^{k+1}=\prox_{\alpha\vr}[WX^k\hspace{-0.8mm}-\hspace{-0.8mm}\alpha\nabla\vs(X^k)\hspace{-0.8mm}-\hspace{-0.8mm}V^\top Y^k],\\
Y^{k+1}=Y^k+VX^k,\label{alg:cv2}\\
\end{cases}
\end{equation}
which computes $(Y^{k+1},X^{k+1})$ from $(Y^{k},X^{k})$. Algorithm \eqref{alg:cv2} is essentially equivalent to PG-EXTRA developed in \cite{shi2015proximal}. {\color{black}Algorithm \eqref{alg:cv2} can run in a decentralized manner. To do so, we associate each row of the dual variable $Y$ with an edge $e=(i,j)\in \cE$, and for simplicity we let agent $i$ store and update the variable $y^e$ (the choice is arbitrary). We also define
\eq{
	\cL_i:=\{e=(i,j)\in \cE,\ \forall\,j>i\}, \label{Li}
}
as the index set of dual variables that agent $i$ needs to update.

Recall $V$ is the scaled incidence matrix defined in \eqref{V-definition}, while $W$ is the weight matrix associated with the network. The calculations of $VX$, $V^\top Y$ and $W X$ require communication, and  other operations are local. For agent $i$, it updates its local variables $x^{i,k+1}$ and $\{y^{e,k+1}\}_{e\in \cL_i}$ according to
\begin{equation}\label{syn-decentralized}
\begin{cases}
\displaystyle{x^{i,k+1}\hspace{-0.5mm}=\hspace{-0.5mm}\prox_{\alpha r_i}\hspace{-0.5mm}\Big(\hspace{-1.2mm}\sum_{j\in \mathcal{N}_i} w_{ij}x^{j,k}\hspace{-0.3mm}-\hspace{-0.3mm}\alpha\nabla s_i(x^{i,k})\hspace{-0.5mm}-\hspace{-0.5mm}\sum_{e\in \mathcal{E}_i} v_{e i}y^{e, k}\Big),}\\
\displaystyle{
	y^{e,k+1} \hspace{-0.5mm}=\hspace{-0.5mm} y^{e,k} \hspace{-0.5mm}+\hspace{-0.5mm} \big(v_{ei} x^{i,k} \hspace{-0.5mm}+\hspace{-0.5mm} v_{ej} x^{j,k}\big),\ \forall\, e\in \cL_i,}\\
\end{cases}
\end{equation}
where $v_{ei}$ and $v_{ej}$ are the $(e,i)$-th and $(e,j)$-th entries of the matrix $V$, respectively. }

Algorithm~\ref{alg:syn} implements recursion~\eqref{syn-decentralized} in the synchronous fashion, which requires two \emph{synchronization barriers} in each iteration $k$. The first one holds computing until an agent  receives all necessary input; after the agent finishes computing its variables,  the second barrier  prevents it from sending out information until all of its neighbors  finish their computation (otherwise, the information intended   for iteration $k+1$ may arrive at a neighbor too early).

\begin{algorithm}
	\SetKwInOut{Input}{Input}
	\Input{Starting point $\{x^{i,0}\},\{y^{e,0}\}$. Set counter $k=0$\;}
	\While{all agents $i\in\cV$ in parallel }{
		\vspace{1mm}
		Wait until  $\{x^{j,k}\}_{j \in \mathcal{N}_i}$ and  $\{y^{e, k}\}_{e\in \cE_i}$ are received;\\[2pt]
		
		Update $x^{i,k+1}$ according to \eqref{syn-decentralized};\\[2pt]
		Update $\{y^{e,k+1}\}_{e\in \cL_i}$ according to \eqref{syn-decentralized};\\[2pt]
		
		Wait until all neighbors finish computing;\\[2pt]
		Set $k \leftarrow k+1$;\\[2pt]
		Send out $\hspace{-0.5mm}x^{i,k+1}\hspace{-0.5mm}$ and $\hspace{-0.5mm}\{y^{e,k+1}\}_{e\in \cL_i}$ to neighbors\;
	}
	\caption{Synchronous  algorithm based on~\eqref{syn-decentralized}}
	\label{alg:syn}\end{algorithm}

\vspace{-6mm}
\subsection{Asynchronous algorithm}
\label{sec:aysn-extra}

In the asynchronous setting, each agent computes and communicates independently without any coordinator. Whenever an arbitrary agent finishes a round of its variables' updates, we let the iteration index $k$ increase by $1$ (see Fig. \ref{fig:delay}). As discussed in Sec. \ref{sec:introduction}, both communication latency and uncoordinated computation result in delays. Let $\tau^k \in \RR^n_+$ and $\delta^k\in \RR^m_+$ be vectors of delays at iteartion $k$. We define $X^{k-\tau^k}$ and $Y^{k-\delta^k}$ as delayed primal and dual variables occurring at iteration $k$:
\eq{
X^{k-\tau^k} &:=
\ba{ccc}
\rule[0.8mm]{0.5cm}{0.3mm} & \left(x^{1, k-\tau^k_1}\right)^\top & \rule[0.8mm]{0.5cm}{0.3mm} \vspace{1mm} \\
& \vdots & \\
\rule[0.8mm]{0.5cm}{0.3mm} & \left(x^{n, k - \tau^k_n}\right)^\top & \rule[0.8mm]{0.5cm}{0.3mm}
\ea \in \RR^{n \times p},\\
Y^{k-\delta^k} &:=
\ba{ccc}
\rule[0.8mm]{0.5cm}{0.3mm} & \left(y^{1, k-\delta^k_1}\right)^\top & \rule[0.8mm]{0.5cm}{0.3mm} \vspace{1mm} \\
& \vdots & \\
\rule[0.8mm]{0.5cm}{0.3mm} & \left(y^{m, k - \delta^k_m}\right)^\top & \rule[0.8mm]{0.5cm}{0.3mm}
\ea \in \RR^{m \times p},\nonumber
}
where $\tau_j^k$ is the $j$-th element of $\tau_k$ and $\delta_e^k$ is the $e$-th element of $\delta^k$.
In the asynchronous setting, recursion \eqref{alg:cv2} is calculated with delayed variables, i.e.,
\eq{\label{asyn-compact-1}
\begin{cases}
	\widetilde{X}^{k+1}\hspace{-1mm}=\hspace{-1mm}\prox_{\alpha\vr}[W\hspace{-0.5mm}X^{k-\tau^k}\hspace{-1.8mm}-\hspace{-0.8mm}\alpha\nabla\vs(X^{k-\tau^k})\hspace{-0.8mm}-\hspace{-0.8mm}V^\top Y^{k-\delta^k}],\\
	\widetilde{Y}^{k+1}\hspace{-1mm}=\hspace{-1mm}Y^{k-\delta^k}+V X^{k-\tau^k}.\\
\end{cases}
}
Suppose agent $i$ finishes update $k+1$. To guarantee convergence, instead of letting $x^{i,k+1}=\widetilde{x}^{i,k+1}$ and $y^{e,k+1}=\widetilde{y}^{e,k+1}$ directly, we propose a relaxation step
\eq{\label{agent-i-update}
\begin{cases}
	\begin{array}{l}
		x^{i,k+1} = x^{i,k} + \eta_i \big( \widetilde{x}^{i,k+1} - x^{i,k-\tau^k_i} \big), \\
		y^{e,k+1} = y^{e,k} + \eta_i \big( \widetilde{y}^{e,k+1} - y^{e,k-\delta^k_e} \big),\ \forall e\in \cL_i.
	\end{array}
\end{cases}
}
where $\widetilde{x}^{i,k+1} - x^{i,k-\tau^k_i}$ and $\widetilde{y}^{e,k+1} - y^{e,k-\delta^k_e}$ behave as updating directions, and $\eta_i \in (0,1)$ behaves as a step size. To distinguish from the step size $\alpha$, we call $\eta_i$ the relaxation parameter of agent $i$.
Its value depends on how out of date agent $i$ receives information from its neighbors. Longer delays require a smaller $\eta_i$, which leads to slower convergence. Since the remaining agents have not finished their updates yet, it holds that
\eq{\label{agent-j-unchanged}
\begin{cases}
	\begin{array}{rl}
		x^{j,k+1} = x^{j,k} & \forall j\neq i,\\
		y^{e,k+1} = y^{e,k} & \forall e \notin \cL_i.
	\end{array}
\end{cases}
}
To write \eqref{agent-i-update} and \eqref{agent-j-unchanged} compactly, we let $S_{\rm p}^i: \RR^{n\times p}\to \RR^{n\times p}$ be the primal selection operator associated with agent $i$. For any matrix $A\in \RR^{n\times p}$, $[S_{\rm p}^i(A)]_j = A_j$ if $j=i$; otherwise $[S_{\rm p}^i(A)]_j = 0$,
where $[S_{\rm p}^i(A)]_j$ and $A_j$ denote the $j$-th row of matrix $S_{\rm p}^i(A)$ and $A$, respectively. Similarly, we also let $S_{\rm d}^{\cL_i}: \RR^{m\times p}\to \RR^{m\times p}$ be the dual selection operator associated with agent $i$. For any matrix $B\in \RR^{m\times p}$, $[S_{\rm d}^{\cL_i}(B)]_e = B_e$ if $e\in \cL_i$; otherwise $[S_{\rm d}^{\cL_i}(B)]_e = 0$.
With the above defined selection operators, \eqref{agent-i-update} and \eqref{agent-j-unchanged} are equivalent to
\eq{\label{asyn-compact-2}
\begin{cases}
	\begin{array}{l}
		X^{k+1} = X^k + \eta_i S_{\rm p}^i \big( \widetilde{X}^{k+1} - X^{k-\tau^k} \big),\\
		Y^{k+1} = Y^k + \eta_i S_{\rm d}^{\cL_i} \big( \widetilde{Y}^{k+1} - Y^{k-\delta^k} \big).
	\end{array}
\end{cases}
}
Recursions \eqref{asyn-compact-1} and \eqref{asyn-compact-2} constitute the asynchronous algorithm.
%
%
%

{
Similar to the synchronous algorithm, the asynchronous recursion \eqref{asyn-compact-1} and \eqref{asyn-compact-2} can also be implemented in a decentralized manner. When agent $i$ is activated at iteration $k$:
\begin{align}
\hspace{-2mm}\begin{cases}
\text{Compute:}\\
\displaystyle \widetilde{x}^{i,k\hspace{-0.3mm}+\hspace{-0.3mm}1}\hspace{-1.2mm}=\hspace{-1.2mm} \prox_{\alpha r_i}\hspace{-0.8mm}\Big(\hspace{-1.5mm}\sum_{j\in\cN_i} \hspace{-2mm} w_{ij}x^{j,k\hspace{-0.3mm}-\hspace{-0.3mm}\tau_j^k}\hspace{-1.5mm}-\hspace{-0.8mm}\alpha \hspace{-0.5mm}\nabla\hspace{-0.3mm} s_i(x^{i,k\hspace{-0.3mm}-\hspace{-0.3mm}\tau_i^k})\hspace{-1.1mm}-\hspace{-2.1mm}\sum_{e \in\cE_i}\hspace{-1.5mm}v_{e i}y^{e,k\hspace{-0.3mm}-\hspace{-0.3mm}\delta_{e}^k}\hspace{-0.8mm}\Big)\\
\widetilde{y}^{e,k+1}\hspace{-1.2mm}=\hspace{-1.2mm}y^{e,k-\delta_e^k} + \big( v_{ei}x^{i,k-\tau_i^k}+  v_{ej}x^{j,k-\tau_j^k} \big),~ \forall e\in\cL_i;\\[5pt]
\text{Relaxed updates:} \\
x^{i,k+1}=x^{i,k}+\eta_i\left(\widetilde{x}^{i,k+1}-x^{i,k-\tau_i^k} \right), \vspace{2mm} \\
y^{e,k+1}=y^{e,k}+\eta_i \left(\widetilde{y}^{e, k+1}-y^{e,k-\delta_e^k} \right), ~ \forall e\in\cL_i,
\end{cases}\label{update_asyn}\end{align}
The entries of $X, Y$ not held by agent $i$ remain unchanged
from $k$ to $k+1$.
Algorithm~\ref{alg:asyn} summarizes the asynchronous updates.}
\begin{algorithm}
\SetKwInOut{Input}{Input}
\Input{Starting point $\{x^{i,0}\},\{y^{e,0}\}$. Set counter $k=0$;}
\While{each agent $i$ asynchronously}{
  Compute per~\eqref{update_asyn} using the information it has \\ \hspace{3mm} available\;
  Send out $\hspace{-0.5mm}x^{i,k+1}\hspace{-0.5mm}$ and $\hspace{-0.5mm}\{y^{e,k+1}\}_{e\in \cL_i}$ to neighbors\;
 }
 \caption{Asynchronous  algorithm based on~\eqref{update_asyn}}
\label{alg:asyn}\end{algorithm}

\vspace{-4mm}
{
\subsection{Asynchronous proximal decentralized gradient descent}
Sec. \ref{sec:aysn-extra} has presented a decentralized primal-dual algorithm for problem \eqref{prob-general}. {\color{black}In \cite{vlaski2015proximal,vlaski2016diffusion,chen2012fast}, proximal gradient descent algorithms are derived to solve stochastic composite optimization problems in a distributed manner by networked agents. Using techniques similar to those developed in \cite{vlaski2015proximal,vlaski2016diffusion,chen2012fast} we can likewise derive a proximal variant for the consensus gradient descent algorithm in \cite{nedic2009distributed}; see \eqref{dpg} below.
Following Sec. \ref{sec:aysn-extra}, its asynchronous version will also be developed; see \eqref{dpg-asyn} below.  
}

To derive the synchronous algorithm, we penalize the constraints of Problem \eqref{prob-compact-V} (which is equivalent to  Problem \eqref{prob-general}) and obtain an auxiliary problem:
\eq{
\Min_{X\in \RR^{n\times p}} \quad \vs(X) + \vr(X) + \frac{1}{\alpha}\|VX\|^2,\label{penalty-consensus}
}
where $\alpha > 0$ is a penalty constant.
By $V^\top V = \frac{1}{2}(I - W)$ in Proposition \ref{lm-matrix-factorization}, Problem \eqref{penalty-consensus} is exactly
\eq{
        \Min_{X\in \RR^{n\times p}} \quad \underbrace{\vs(X) + \frac{1}{2 \alpha}X^\top (I-W) X}_{\rm smooth} + \vr(X),\label{penalty-consensus-I-W}
}
which has the smooth-plus-proximable form. Therefore,  applying
 the proximal gradient method with step-size $\alpha$ yields the iteration:
 \eq{\label{prox-dgd-compact}
\vx^{k+1} & = \prox_{\alpha \vr} \left( X^{k} - \alpha [\grad \vs(X^k) - \frac{1}{\alpha}(I - W) X^k] \right) \nnb
& = \prox_{\alpha \vr} \left(W X^{k} - \alpha \grad \vs(X^k) \right),
}
Recursion \eqref{prox-dgd-compact} is ready for decentralized implementation --- each agent $i\in \cV$ performs
\eq{\label{dpg}
        x^{i,k+1} = \prox_{\alpha r_i} \Big( \sum_{j\in \cN_i} w_{ij} x^{j,k} - \alpha \grad s_i(x^{i,k})\Big).
}
Since algorithm \eqref{dpg} solves the penalty problem \eqref{penalty-consensus-I-W} instead of the original problem \eqref{prob-general}, one must reduce $\alpha$ during the iterations or use a small value. {\color{black}Recursion \eqref{dpg} is similar to the proximal diffusion gradient descent algorithm derived in \cite{vlaski2015proximal,vlaski2016diffusion} in which each agent first applies local proximal gradient descent and then performs weighted averaging with neighbors. {According to \cite{sayed2014adaptive,sayed2014adaptation}, the diffusion strategy removes the asymmetry problem that can cause consensus implementations to become unstable for stochastic optimization problems.}}

%
%

The asynchronous version of \eqref{dpg} allows delayed variables and adds relaxation to \eqref{dpg}; each agent $i$ performs
\eq{\label{dpg-asyn}
        \begin{cases}
                \begin{array}{rl}
                        \widetilde{x}^{i,k+1}\hspace{-3mm} &=\hspace{-1mm} \prox_{\alpha r_i} \hspace{-1mm}\left( \sum_{j\in \cN_i}\hspace{-1mm} w_{ij} x^{j,k - \tau_j^k} \hspace{-1mm}-\hspace{-0.8mm} \alpha \grad s_i(x^{i,k-\tau^k_i})\right), \\
                        x^{i,k+1} \hspace{-3mm}&=\hspace{-1mm} x^{i,k} + \eta_i \left( \widetilde{x}^{i,k+1} - x^{i,k-\tau^k_i} \right).
                \end{array}
        \end{cases}
}
Its convergence follows from treating the proximal gradient iteration as a fixed-point iteration and applying results from \cite{Peng_2015_AROCK}. {Unlike Algorithm \ref{alg:asyn}, the asynchronous algorithm \eqref{dpg-asyn} uses only primal variables, so it is easier to implement. 
Like all other distributed gradient descent algorithms \cite{nedic2009distributed,sayed2014adaptive,vlaski2015proximal,vlaski2016diffusion,chen2012fast}, however,  algorithm \eqref{dpg} and its asynchronous version \eqref{dpg-asyn} must use diminishing step sizes to guarantee exact convergence, which causes slow convergence.}
%


}

\section{Convergence Analysis}
\subsection{Preliminaries}\label{sec:op}
{Our convergence analysis is based on the theory of nonexpansive operators, as used in e.g.,~\cite{bianchi2016coordinate}.}

{In this subsection we first present several definitions, lemmas and theorems that underlie the convergence analysis. }

{
	First we introduce a new symmetric matrix 
	\begin{equation}\label{metric}
	G:=\begin{bmatrix}
	I_n&V^\top\\
	V&I_m
	\end{bmatrix} \in \RR^{m+n}. 
	\end{equation} 
	As we will discuss later, $G$ is an important auxiliary matrix that helps establish the convergence properties of the synchronous Algorithm~\ref{alg:syn} and asynchronous Algorithm~\ref{alg:asyn}. Meanwhile, it also determines the range of step size $\alpha$ that enables the algorithms to converge. {\color{black}Recall that the weight matrix $W$ associated with the network is symmetric and doubly stochastic. Besides, there exists at least one agent $i$ such that $w_{ii}>0$. Under these conditions, it is shown in \cite{sayed2014adaptation} that $W$ has an eigenvalue $1$ with multiplicity $1$, and all other eigenvalues are strictly inside the unit circle. With such $W$, we can show that $G$ is positive definite.}
}

\begin{lemma} It holds that $G \succ 0$.
\end{lemma}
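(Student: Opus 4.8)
The plan is to reduce $G \succ 0$ to a spectral property of $W$ by a Schur complement computation. Partition $G$ into its natural $2\times 2$ block form with blocks $I_n$, $V^\top$, $V$, and $I_m$. The $(2,2)$ block $I_m$ is positive definite, so by the Schur complement characterization of positive definiteness, $G \succ 0$ holds if and only if the Schur complement of $I_m$ in $G$ is positive definite, i.e.
\[
I_n - V^\top I_m^{-1} V \;=\; I_n - V^\top V \;\succ\; 0 .
\]

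Next I would substitute the factorization identity of Proposition~\ref{lm-matrix-factorization}, $V^\top V = (I-W)/2$, which turns this condition into
\[
I_n - \tfrac{1}{2}(I - W) \;=\; \tfrac{1}{2}(I_n + W) \;\succ\; 0 ,
\]
so the lemma is equivalent to the eigenvalue bound $\lambda_{\min}(W) > -1$. This bound is precisely the structural fact about $W$ recalled immediately before the lemma: since $W$ is symmetric, doubly stochastic, and has at least one strictly positive diagonal entry, the result cited from \cite{sayed2014adaptation} ensures that $1$ is a simple eigenvalue of $W$ and all other eigenvalues lie strictly inside the unit circle; in particular $\lambda_{\min}(W) > -1$, hence $\tfrac{1}{2}(I_n + W) \succ 0$ and therefore $G \succ 0$.

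An equivalent, self-contained route avoids the Schur complement lemma: for any $u \in \RR^n$ and $v \in \RR^m$, completing the square gives
\[
\begin{bmatrix} u \\ v \end{bmatrix}^{\!\top}\! G \begin{bmatrix} u \\ v \end{bmatrix} = \|v + Vu\|^2 + u^\top (I_n - V^\top V) u = \|v + Vu\|^2 + \tfrac{1}{2}\, u^\top (I_n + W)\, u ,
\]
which is strictly positive whenever $(u,v)\neq 0$, again by the same eigenvalue bound. I do not expect a genuine obstacle here: the only nontrivial ingredient is the spectral inequality $\lambda_{\min}(W) > -1$, which is handed to us by the assumptions on $W$; everything else is a one-line manipulation, and the only care required is to invoke the correct spectral property (namely that $-1$ is strictly excluded from the spectrum of $W$, not merely $\lambda_{\min}(W)\ge -1$).
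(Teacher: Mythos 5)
Your proof is correct and follows essentially the same route as the paper: a Schur complement reduction of $G \succ 0$ to $I - V^\top V \succ 0$, followed by the factorization $V^\top V = (I-W)/2$ from Proposition~\ref{lm-matrix-factorization} and the spectral bound $\lambda_{\min}(W) > -1$. The additional completing-the-square verification is a nice self-contained alternative but not needed.
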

\begin{proof}
	{\color{black}According to the Schur Complement condition on positive definiteness, we known
		$
			G \succ 0 \Longleftrightarrow I \succ 0, I - V^\top V \succ 0.
		$
		Recall Proposition \ref{lm-matrix-factorization} and $\lambda_{\min}(W)>-1$, it holds that 
		$I - V^\top V = \frac{1}{2}(I + W) \succ 0,$
		which proves $G \succ 0$.
	}	
\end{proof}

{
	To analyze the convergence of the proposed Algorithms~\ref{alg:syn} and \ref{alg:asyn}, we still need a classic result from the nonexpansive operator theory. In the following we provide related definitions and preliminary results.}
\begin{definition}
	Let $\cH$ be a finite dimensional vector space (e.g. $\RR^p\mbox{ or }\RR^{n\times p}$) equipped with a certain inner product $\langle\cdot,\cdot\rangle$ and its induced norm $\|\vx\|=\sqrt{\langle\vx,\vx\rangle},\forall\vx\in\cH$. {\color{black}An operator $\cP:\cH\to\cH$} is called nonexpansive if $$\|\cP\vx-\cP\vy\|\leq\|\vx-\vy\|,\forall\vx,\vy\in\cH.$$
\end{definition}

\begin{definition}\label{defi-fixed-point}
	For any operator $\cP:\cH\to\cH$ (not necessarily nonexpansive), we define 
	$\mathbf{Fix} \cP:=\{\vx\in\cH|\vx=\cP\vx\}$
	to be the set of fixed points of operator $\cP$.
\end{definition}

{
\begin{definition}\label{defi-ave-operator}
	When the operator $\cP$ is nonexpansive and $\beta\in(0,1)$, the combination operator
	$$\cQ := (1-\beta)I+\beta\cP$$ 
	is defined as the \emph{averaged operator} associated with $\cP$.
\end{definition}}
\noindent {With Definition \ref{defi-fixed-point} and \ref{defi-ave-operator}, it is easy to verify $\mathbf{Fix}\cQ = \mathbf{Fix}\cP$.}

{The next classical theorem states the convergence property of an averaged operator.}
\begin{theorem}[KM iteration~\cite{krasnosel1955two,bauschke2011convex}]\label{thm:cvg}
	Let $\cP:\cH\to\cH$ be a nonexpansive operator and $\beta \in (0,1)$. Let $\cQ$ be the averaged operator associated with $\cP$. Suppose that the set $\mathbf{Fix} \cQ$ is nonempty. From any starting point $\vz^0$, the iteration 
	\eq{\vz^{k+1} =\cQ \vz^k =(1-\beta)\vz^k+\beta\cP\vz^{k}}
	produces a sequence $\{\vz^k\}_{k\geq 0}$ converging to a point $\vz^* \in \mathbf{Fix} \cQ$ (and hence also to a point $\vz^* \in \mathbf{Fix} \cP$).  
\end{theorem}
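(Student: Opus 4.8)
The plan is to reduce the convergence of the KM iteration to the standard Krasnosel'ski\u{\i}--Mann (KM) theorem for averaged operators in a Hilbert space, since $\cH$ here is finite dimensional and the statement is precisely the classical result. First I would fix a point $\vz^\star \in \mathbf{Fix}\cQ = \mathbf{Fix}\cP$ (nonempty by hypothesis) and record the defining inequalities: $\cP$ nonexpansive means $\|\cP\vx - \cP\vy\| \le \|\vx - \vy\|$, and $\cQ = (1-\beta)I + \beta\cP$ with $\beta \in (0,1)$. The central computational step is the averaged-operator identity
\eq{
\|\cQ\vx - \vz^\star\|^2 = (1-\beta)\|\vx - \vz^\star\|^2 + \beta\|\cP\vx - \vz^\star\|^2 - \beta(1-\beta)\|\vx - \cP\vx\|^2,
}
which follows from expanding $\cQ\vx - \vz^\star = (1-\beta)(\vx - \vz^\star) + \beta(\cP\vx - \vz^\star)$ and using $\cP\vz^\star = \vz^\star$. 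Combining this with nonexpansiveness, $\|\cP\vx - \vz^\star\| = \|\cP\vx - \cP\vz^\star\| \le \|\vx - \vz^\star\|$, gives the Fej\'er-type inequality
\eq{
\|\vz^{k+1} - \vz^\star\|^2 \le \|\vz^k - \vz^\star\|^2 - \beta(1-\beta)\|\vz^k - \cP\vz^k\|^2.
}

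From this inequality the argument is routine. The sequence $\{\|\vz^k - \vz^\star\|\}$ is nonincreasing hence convergent, so $\{\vz^k\}$ is bounded; summing the inequality over $k$ and using $\beta(1-\beta) > 0$ yields $\sum_k \|\vz^k - \cP\vz^k\|^2 < \infty$, hence $\|\vz^k - \cP\vz^k\| \to 0$. By boundedness and finite dimensionality, some subsequence $\vz^{k_j} \to \bar{\vz}$; continuity of $\cP$ (nonexpansive maps are continuous) gives $\cP\bar{\vz} = \bar{\vz}$, i.e. $\bar{\vz} \in \mathbf{Fix}\cP = \mathbf{Fix}\cQ$. Finally, apply the Fej\'er inequality with $\vz^\star$ replaced by this particular limit point $\bar{\vz}$: the full sequence $\{\|\vz^k - \bar{\vz}\|\}$ is then convergent, and it has a subsequence tending to $0$, so the whole sequence converges to $\bar{\vz}$. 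This establishes $\vz^k \to \vz^\star := \bar{\vz} \in \mathbf{Fix}\cQ = \mathbf{Fix}\cP$.

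There is essentially no serious obstacle here: the result is classical and the only thing to be careful about is invoking finite-dimensionality (rather than general Hilbert-space weak convergence) to extract a convergent subsequence, which the hypothesis on $\cH$ grants. If one prefers, the entire proof can simply be cited to \cite{krasnosel1955two,bauschke2011convex} as stated in the theorem label; I would include the short self-contained argument above for completeness, since the two displayed inequalities are exactly what the later delay analysis will need to mimic in the non-Euclidean metric $G$. The one place worth double-checking is the averaged-operator identity, which is the parallelogram-law computation $\|(1-\beta)a + \beta b\|^2 = (1-\beta)\|a\|^2 + \beta\|b\|^2 - \beta(1-\beta)\|a-b\|^2$ applied with $a = \vx - \vz^\star$, $b = \cP\vx - \vz^\star$; this holds in any inner-product space and requires nothing beyond the given structure.
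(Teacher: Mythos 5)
Your proposal is correct and complete: the convexity identity, the resulting Fej\'er inequality, summability of the residuals $\|\vz^k-\cP\vz^k\|^2$, extraction of a convergent subsequence via finite-dimensionality, and the final upgrade to full-sequence convergence are exactly the standard Krasnosel'ski\u{\i}--Mann argument. The paper itself gives no proof of this theorem --- it simply cites \cite{krasnosel1955two,bauschke2011convex}, and your argument is precisely the one found in those references (specialized to finite dimensions, which lets you avoid weak convergence), so there is nothing to reconcile.
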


For the remainder of the paper, we consider a specific Hilbert space $\mathcal{H}$ for the purpose of analysis.
\begin{definition}\label{defi:4}
	Define $\cH$ to be the Hilbert space $\RR^{(n+m)\times p}$ endowed with the inner product {$\langle Z,\widetilde{Z}\rangle_A:=\tr(Z^\top A\widetilde{Z})$} and the norm $\|Z\|_A:=\sqrt{\langle Z,Z \rangle_A}$, {where $A\in \RR^{(n+m)\times (n+m)}$ is a positive definite matrix, and $Z \in \RR^{(n+m)\times p}$.}
\end{definition}

\subsection{{Convergence of Algorithm~\ref{alg:syn}}}\label{sec:covsyn}

{In this subsection we prove the convergence of Algorithm~\ref{alg:syn}. We first re-derive the iteration \eqref{alg:cv} using operator splitting techniques and identify it as an averaged operator. Next, we apply {Theorem \ref{thm:cvg}} and establish its convergence property.}

Let $\rho_{\min}:=\lambda_{\min}(G)>0$ be the smallest eigenvalue of $G$.
{In the following assumption, we specify the properties of the cost functions $s_i$ and $r_i$, and the step-size $\alpha$.}
\begin{assumption}\label{assumfunc}
	$ $
	\begin{enumerate}
		\item The functions $s_i$ and $r_i$ are closed, proper and convex.
		\item The functions $s_i$ are differentiable and satisfy:
		$$\|\nabla s_i(x)-\nabla s_i(\widetilde{x})\|\leq L_i\|x-\widetilde{x}\|,\quad\forall x,\widetilde{x}\in\RR^p,$$
		where $L_i>0$ is the Lipschitz constant.
		\item The parameter $\alpha$ in synchronous algorithm \eqref{syn-decentralized} and asynchronous  algorithm \eqref{update_asyn} satisfies $0<\alpha<{2\rho_{\min}}/{L}$, where $L:=\max_i L_i$.
	\end{enumerate}
\end{assumption}


Since $\vr$ and $\vs$ are convex, the solution to the saddle point problem~\eqref{prob:saddle} is the same as the solution to the following Karush-Kuhn-Tucker (KKT) system:
\eq{\label{KKT}
0\in \bigg(\hspace{-1.5mm}\underbrace{\begin{bmatrix}
                \nabla\vs & 0\\
                0 & 0
        \end{bmatrix}}_{\mbox{operator}~\cA}\hspace{-1.5mm} + \hspace{-1 mm} \underbrace{
        \begin{bmatrix}
                \partial\vr & 0 \\
                0 & 0
        \end{bmatrix}+\begin{bmatrix}
        0&\frac{1}{\alpha}V^\top\\
        -\frac{1}{\alpha}V&0
\end{bmatrix}}_{\mbox{operator}~\cB}\hspace{-1 mm} \bigg) \underbrace{\begin{bmatrix}
X\\
Y
\end{bmatrix}}_{Z},
}
which can be written as 
$$0\in\cA Z+\cB Z,\quad \text{with}\quad  Z:=\begin{bmatrix}X\\ Y\end{bmatrix}\in \RR^{(n+m)\times p}.$$
For any positive definite matrix $M\in \RR^{(n+m)\times (n+m)}$, we have
\begin{align}\label{xcmn8}
0\in\cA Z+\cB Z \Leftrightarrow&~MZ-\cA Z \in MZ+\cB Z\\
\Leftrightarrow&~Z-M^{-1}\cA Z \in Z+M^{-1}\cB Z\\
\Leftrightarrow&~Z=(\cI+M^{-1}\cB)^{-1}(\cI-M^{-1}\cA)Z,
\end{align}
Note that $(\cI+M^{-1}\cB)^{-1}$ is well-defined and single-valued; it is called \emph{resolvent} \cite{bauschke2011convex}.
From now on, we let 
\eq{
M:=\frac{1}{\alpha}G \succ 0
\label{M=G/alpha}
}
and define the operator
\begin{equation}
\cT:=(\cI+M^{-1}\cB)^{-1}(\cI-M^{-1}\cA).\label{operator}\end{equation}
From \eqref{KKT}, \eqref{xcmn8} and the definition of operator $\cT$ in \eqref{operator}, we conclude that the solutions to the KKT system~\eqref{KKT} coincide with the fixed points of $\cT$.

Now we consider the fixed-point iteration 
\eq{
Z^{k+1}=\cT Z^k=(\cI+M^{-1}\cB)^{-1}(\cI-M^{-1}\cA)Z^k,\label{908098asdkjh}
}
which reduces to 
\eq{\label{23089a}
MZ^k-\cA Z^k \in MZ^{k+1}+{\cB}Z^{k+1}.}
{Substituting $M$ defined in \eqref{M=G/alpha} into \eqref{23089a} and multiplying $\alpha$ to both sides, we will achieve 
}
\begin{align*}
\begin{cases}
        \begin{array}{rl}
                \hspace{-2mm}X^k \hspace{-1mm}+\hspace{-1mm} V^\top\hspace{-0.5mm} Y^{k}\hspace{-1mm}-\hspace{-1mm}\alpha\hspace{-0.5mm} \nabla \vs(X^k) \hspace{-3mm}&\in \hspace{-0.5mm} X^{k+1} \hspace{-1mm}+\hspace{-1mm} 2V^\top\hspace{-0.5mm} Y^{k\hspace{-0.2mm}+\hspace{-0.2mm}1} \hspace{-0.5mm}+\hspace{-0.5mm} \alpha\partial \vr (X^{k\hspace{-0.2mm}+\hspace{-0.2mm}1}),\\
                Y^k + VX^{k}&= Y^{k+1}+ VX^{k+1} -VX^{k+1},
        \end{array}
\end{cases}
\end{align*}
%
which, by cancellation, is further equivalent to Recursion~\eqref{alg:cv}. 

{Until now, we have rewritten the primal-dual algorithm \eqref{alg:cv} as a fixed-point iteration \eqref{908098asdkjh} with operator $\cT$ defined in \eqref{operator}. Besides, we also established that $\mathbf{Fix} \cT$ coincide with the solutions to the KKT system \eqref{KKT}. What still needs to be proved is that $\{Z^k\}$ generated through the fixed-point iteration \eqref{908098asdkjh} will converge to $\mathbf{Fix} \cT$. }
Below is a classic important result of the operator $\cT$. For a proof, see~\cite{davis2015convergence}.
\begin{theorem}\label{thm:average}
Under Assumption~\ref{assumfunc} and the above-defined norm with $M = G/\alpha$, there exists a nonexpansive operator $\cO$ and some $\beta \in (0,1)$ such that the operator $\cT$ defined in \eqref{operator} satisfies $\cT=(1-\beta)I+\beta\cO$. Hence, $\cT$ is an averaged operator. 
\end{theorem}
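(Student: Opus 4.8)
The plan is to realize the operator $\cT$ as a resolvent-type operator in the $M$-metric and invoke a standard averagedness result for forward-backward splitting. First I would observe that in the Hilbert space $\cH$ of Definition~\ref{defi:4} with $A=M=G/\alpha$, the operator $M^{-1}\cB$ is maximally monotone: indeed $\cB$ is the sum of the (maximally monotone) subdifferential block $\begin{bmatrix}\partial\vr&0\\0&0\end{bmatrix}$ and the skew-symmetric bounded linear operator $\begin{bmatrix}0&\tfrac1\alpha V^\top\\-\tfrac1\alpha V&0\end{bmatrix}$, and composing with $M^{-1}$ and using the $M$-inner product turns monotonicity in the Euclidean inner product into monotonicity in $\langle\cdot,\cdot\rangle_M$. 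Consequently its resolvent $(\cI+M^{-1}\cB)^{-1}$ is firmly nonexpansive in $\|\cdot\|_M$, hence $\tfrac12$-averaged.

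Next I would treat the forward part $\cI-M^{-1}\cA$. Here $\cA=\begin{bmatrix}\nabla\vs&0\\0&0\end{bmatrix}$ and $\nabla\vs$ is $L$-Lipschitz (Assumption~\ref{assumfunc}, item 2, with $L=\max_iL_i$). The key quantitative step is to convert Euclidean Lipschitz continuity of $\cA$ into cocoercivity of $M^{-1}\cA$ in the $M$-metric: since $M=G/\alpha\succeq(\rho_{\min}/\alpha)\cI$ in the Euclidean sense, one gets that $M^{-1}\cA$ is $\mu$-cocoercive in $\|\cdot\|_M$ with $\mu=\rho_{\min}/(\alpha L)$ (this is the standard computation relating $\langle M^{-1}\cA Z-M^{-1}\cA\widetilde Z,\,Z-\widetilde Z\rangle_M=\langle\cA Z-\cA\widetilde Z,Z-\widetilde Z\rangle$ to $\|M^{-1}\cA Z-M^{-1}\cA\widetilde Z\|_M^2\le(1/\rho_{\min})\|\cA Z-\cA\widetilde Z\|^2/\alpha\cdot\alpha=\ldots$, using $\lambda_{\min}(M)\ge\rho_{\min}/\alpha$). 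By the classical fact that $\cI-\gamma T$ with $T$ being $\mu$-cocoercive is $\gamma/(2\mu)$-averaged whenever $0<\gamma<2\mu$, and here $\gamma=1$, the forward step $\cI-M^{-1}\cA$ is $\alpha L/(2\rho_{\min})$-averaged provided $\alpha<2\rho_{\min}/L$, which is exactly Assumption~\ref{assumfunc}, item 3.

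Then I would assemble the two pieces using the composition rule for averaged operators: the composition of an $\alpha_1$-averaged and an $\alpha_2$-averaged operator is $\beta$-averaged with $\beta=(\alpha_1+\alpha_2-2\alpha_1\alpha_2)/(1-\alpha_1\alpha_2)\in(0,1)$. Taking $\alpha_1=\tfrac12$ (the resolvent) and $\alpha_2=\alpha L/(2\rho_{\min})\in(0,1)$ (the forward step) yields the desired $\beta\in(0,1)$, and setting $\cO:=\cI+\tfrac1\beta(\cT-\cI)$ gives a nonexpansive operator with $\cT=(1-\beta)\cI+\beta\cO$, proving $\cT$ is averaged. The main obstacle, and the place where care is genuinely needed, is the metric bookkeeping in step two: one must be precise that all the averagedness and cocoercivity constants are taken with respect to $\|\cdot\|_M$ rather than the Euclidean norm, and correctly track how $\lambda_{\min}(G)=\rho_{\min}$ enters the cocoercivity constant so that the step-size bound $\alpha<2\rho_{\min}/L$ comes out exactly. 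All of this is the content of the cited reference~\cite{davis2015convergence}, so in the paper I would simply cite it; the sketch above indicates why the stated $\beta\in(0,1)$ exists under Assumption~\ref{assumfunc}.
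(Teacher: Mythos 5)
Your proposal is correct and follows the same route the paper takes, namely the one in the cited reference~\cite{davis2015convergence}: the paper offers no proof beyond that citation, and your sketch is precisely the standard argument that the backward step is a firmly nonexpansive resolvent in the $M$-metric, that the forward step $\cI-M^{-1}\cA$ is $\alpha L/(2\rho_{\min})$-averaged by cocoercivity of $M^{-1}\cA$ under $\alpha<2\rho_{\min}/L$, and that the composition of averaged operators is averaged with the stated $\beta\in(0,1)$. The only point worth making explicit is that passing from $L$-Lipschitz continuity of $\nabla\vs$ to $1/L$-cocoercivity uses the Baillon--Haddad theorem, which requires the convexity of $\vs$ guaranteed by Assumption~\ref{assumfunc}.
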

{The convergence of the synchronous update~\eqref{alg:cv} follows directly from Theorems~\ref{thm:cvg} and \ref{thm:average}}.
\begin{corollary}\label{synconv}
Under Assumption~\ref{assumfunc}, Algorithm~\ref{alg:syn} produces $Z^k$ that converges to a solution $Z^*=[X^*;Y^*]$ to the KKT system~\eqref{KKT}, which is also a saddle point to Problem~\eqref{prob:saddle}. Therefore, $X^*$ is a solution to Problem~\eqref{prob-compact-V}.
\end{corollary}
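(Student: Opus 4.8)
The plan is to read the corollary off directly from the three ingredients already assembled: (i) Algorithm~\ref{alg:syn} is an exact implementation of the fixed-point iteration $Z^{k+1}=\cT Z^k$; (ii) by Theorem~\ref{thm:average}, $\cT$ is an averaged operator whose fixed-point set coincides with the solution set of the KKT system~\eqref{KKT}; and (iii) the KM iteration (Theorem~\ref{thm:cvg}) converges whenever that fixed-point set is nonempty. Concretely, the first step is to recall that the decentralized updates~\eqref{syn-decentralized} carried out by Algorithm~\ref{alg:syn}, once the agents' variables are stacked into $Z=[X;Y]$, are algebraically identical to the primal--dual recursion~\eqref{alg:cv}, which in turn was shown in the passage leading to~\eqref{908098asdkjh} to be exactly $Z^{k+1}=\cT Z^k$ with $\cT$ as in~\eqref{operator} and $M=G/\alpha$ as in~\eqref{M=G/alpha}. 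Hence it suffices to analyze this fixed-point iteration in the Hilbert space of Definition~\ref{defi:4} with $A=M=G/\alpha$, which is legitimate since $G\succ0$ by Lemma~1.

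The second step is to invoke Theorem~\ref{thm:average}: under Assumption~\ref{assumfunc} --- in particular the step-size restriction $0<\alpha<2\rho_{\min}/L$, which is precisely what makes $\cI-M^{-1}\cA$ averaged in the $G/\alpha$-norm --- there exist a nonexpansive operator $\cO$ and some $\beta\in(0,1)$ with $\cT=(1-\beta)\cI+\beta\cO$, so $\cT$ is the averaged operator associated with $\cO$ and $\mathbf{Fix}\,\cT=\mathbf{Fix}\,\cO$. The third step is to check $\mathbf{Fix}\,\cT\neq\emptyset$. By the remark following~\eqref{operator}, $\mathbf{Fix}\,\cT$ is exactly the solution set of~\eqref{KKT}; since Problem~\eqref{prob-compact-V} is convex and its only constraint $VX=0$ is linear, strong duality holds and a saddle point of~\eqref{prob:saddle} exists, so the KKT system~\eqref{KKT} is solvable. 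Applying Theorem~\ref{thm:cvg} with $\cP=\cO$ and $\cQ=\cT$ then yields that $\{Z^k\}$ produced by Algorithm~\ref{alg:syn} converges in the $G/\alpha$-norm --- hence in the standard norm, as $G\succ0$ --- to some $Z^*\in\mathbf{Fix}\,\cT$.

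Finally I would decode the limit: $Z^*=[X^*;Y^*]$ solving~\eqref{KKT} is a saddle point of~\eqref{prob:saddle}, so its $X$-component is feasible ($VX^*=0$) and minimizes the Lagrangian at the optimal multiplier $Y^*$, i.e. $X^*$ solves~\eqref{prob-compact-V}; by the chain of equivalences \eqref{prob-general}$\,\Leftrightarrow\,$\eqref{prob-genral-local}$\,\Leftrightarrow\,$\eqref{prob-compact-I-W}$\,\Leftrightarrow\,$\eqref{prob-compact-V}, each row of $X^*$ is a common minimizer of $\bar{f}$. The only point that is not pure bookkeeping is the nonemptiness of $\mathbf{Fix}\,\cT$, i.e. solvability of~\eqref{KKT}; I expect this to rest on the (implicit) standing assumption that Problem~\eqref{prob-general} admits a minimizer, which combined with the linearity of the consensus constraint supplies the constraint qualification for strong duality. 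Everything else is a direct appeal to Theorems~\ref{thm:cvg} and~\ref{thm:average} together with the identifications already established in the text.
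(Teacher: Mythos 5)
Your proposal is correct and follows the same route as the paper, which proves the corollary simply by combining Theorem~\ref{thm:cvg} (KM iteration) with Theorem~\ref{thm:average} ($\cT$ is averaged) after identifying Algorithm~\ref{alg:syn} with the fixed-point iteration $Z^{k+1}=\cT Z^k$ in the $G/\alpha$-metric. The only difference is that you make explicit two points the paper leaves implicit --- the nonemptiness of $\mathbf{Fix}\,\cT$ via existence of a saddle point, and the decoding of the limit back to a solution of Problem~\eqref{prob-compact-V} --- both of which you handle correctly.
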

Corollary \ref{synconv} states that if we run algorithm~\ref{alg:syn} in the synchronous fashion, the agents' local variables will converge to a consensual solution to the problem~\eqref{prob-general}.


\subsection{{Convergence of Algorithm~\ref{alg:asyn}}}
{Algorithm~\ref{alg:asyn} is an asynchronous version that involves random variables. In this subsection we will} establish its almost sure (a.s.) convergence (i.e., with probability 1). 
The analysis is inspired by~\cite{Peng_2015_AROCK} and~\cite{peng2016coordinate}.

{\subsubsection{Algorithm reformulation}It was shown in Sec. \ref{sec:covsyn} that Algorithm~\ref{alg:syn} can be rewritten as a fixed-point iteration \eqref{908098asdkjh}. Now we let $\cT_i$ be the operator corresponding to the update performed by agent $i$, i.e., $\cT_i$ only updates $x^i$ and $\{y^e\}_{e\in\cL_i}$ (the definition of $\cL_i$ can be referred to \eqref{Li}) and leaves the other variables unchanged.
{We define
	\eq{
	\cI_i = \{i\} \cup \{n+e|\mbox{ $e$ is the edge index of $(i,j)\in \cL_i$ }\}
	}
to be the set of all row indices of $Z$ that agent $i$ needs to update. The operator $\cT_i: \RR^{(m+n)\times p}\to \RR^{(m+n)\times p}$ is defined as follows. For any $Z\in \RR^{(m+n)\times p}$, $\cT_i Z \in \RR^{(m+n)\times p}$ is a matrix with
\eq{\label{Ti}
(\cT_i Z)_j :=
\begin{cases}
	\begin{array}{cl}
		(\cT Z)_j, & \mbox{if }j\in \cI_i;\\
		Z_j, & \mbox{if } j\notin \cI_i,
	\end{array}
\end{cases}
}
where $Z_j$, $(\cT Z)_j$ and $(\cT_i Z)_j$ are the $j$-th row of $Z$, $\cT Z$ and $\cT_i Z$, respectively (see the notation section \ref{sec:notation}).
	Moreover, we define
	\eq{\label{S=I-T}
		\cS:=I-\cT, \quad\quad \cS_i:= I - \cT_i.
		}
	{Based on the definition of the operator $\cT_i$, for any $Z\in \RR^{(m+n)\times p}$, $\cS_i Z \in \RR^{(m+n)\times p}$ is also a matrix with
		\eq{\label{Si}
			(\cS_i Z)_j :=
			\begin{cases}
				\begin{array}{cl}
					(Z - \cT_iZ)_j,& \mbox{if $j\in \cI_i$;}\\
					0, & \mbox{if $j\notin \cI_i$.}
				\end{array}
			\end{cases}
		}
	}
	We also define the delayed variable used in iteration $k$ as
	{\color{black}
		\eq{\widehat{Z}^k:=
			\ba{c}
			X^{k-\tau^k}\\Y^{k-\delta^k}
			\ea \in \RR^{(m+n)\times p}.\label{zhat}}

		Suppose agent $i_k$ is activated at iteration $k$, with definitions \eqref{Ti} and \eqref{zhat} it can be verified that recursions \eqref{asyn-compact-1} and \eqref{asyn-compact-2} are equivalent to:
		\eq{
			\begin{cases}
				\begin{array}{rl}
					\widetilde{Z}^{k} & = \cT_{i_k} \widehat{Z}^k, \\
					Z^{k+1} & =  Z^k - \eta_{i_k} \big( \widehat{Z}^k - \widetilde{Z}^{k} \big).
				\end{array}
			\end{cases} \label{c972}
		}
		From recursion \eqref{c972} we further have
		\eq{
			Z^{k+1}  = Z^k - \eta_{i_k} \big( \widehat{Z}^k - \cT_{i_k} \widehat{Z}^k \big)\overset{\eqref{Si}}{=}Z^k - \eta_{i_k} \cS_{i_k}\widehat{Z}^k. \label{i237}
		}
	}Therefore, the asynchronous update algorithm \eqref{update_asyn} can be viewed as a stochastic coordinate-update iteration based on delayed variables~\cite{Peng_2015_AROCK,peng2016coordinate}, with each coordinate update taking the form of~\eqref{i237}.
	
	\vspace{1mm}
	\subsubsection{Relation between $\widehat{Z}^k$ and ${Z}^k$}
	{The following assumption introduces a uniform upper bound for the random delays.}
	\begin{assumption}\label{ass-delay}
		{At any iteration $k$,}
		the delays $\tau_j^k,j=1,2,\ldots,n$ and $\delta_e^k,e=1,2,\ldots,m$ defined in~\eqref{update_asyn} have a uniform upper bound $\tau>0$.
	\end{assumption}
	As we are finishing this paper, the recent work \cite{Hannah2016unbounded} has relaxed the above assumption by associating step sizes to potentially unbounded delays. {We still keep Assumption~\ref{ass-delay} in our main analysis and shall discuss the case of unbounded delays in Section~\ref{sec:ubd}} 
	
	Under Assumption \ref{ass-delay}, the relation between $\widehat{Z}^k$ and $Z^k$ can be characterized in the following lemma \cite{Peng_2015_AROCK}:
	\begin{lemma}\label{lemma-hat-z-relation}
		Under Assumption \ref{ass-delay}, it holds that
		\begin{align}
		\widehat{Z}^k=Z^k+\sum_{d\in J(k)}(Z^d-Z^{d+1}),\label{eqn:inconsist}
		\end{align}
		where $J(k)\subseteq \{k-1,...,k-\tau\}$ is an index set.
	\end{lemma}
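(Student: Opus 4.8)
The plan is to adapt the telescoping identity of \cite{Peng_2015_AROCK} to the block structure here. The starting point is that a single iteration changes only the rows owned by the activated agent. Indeed, the coordinate recursion \eqref{i237} reads $Z^{d+1}=Z^d-\eta_{i_d}\cS_{i_d}\widehat{Z}^d$, and by the support property \eqref{Si} of $\cS_{i_d}$ the matrix $Z^d-Z^{d+1}$ is supported on the row index set $\cI_{i_d}$; every other row of $Z$ is identical at iterations $d$ and $d+1$.

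Next I would telescope one row at a time. Fix a row index $j$. By the definition of $\widehat{Z}^k$ in \eqref{zhat} together with Assumption~\ref{ass-delay}, the $j$-th row of $\widehat{Z}^k$ coincides with the $j$-th row of $Z^{t_j}$ for some iteration index $t_j\in\{k-\tau,\dots,k\}$ (namely $t_j=k-\tau^k_j$ when $j\le n$ is a primal row, and $t_j=k-\delta^k_e$ when $j=n+e$ is a dual row). Hence
\eq{
\widehat{Z}^k_j-Z^k_j \;=\; Z^{t_j}_j-Z^k_j \;=\; \sum_{d=t_j}^{k-1}\big(Z^d_j-Z^{d+1}_j\big),
}
with the convention that the sum is empty (equal to $0$) when $t_j=k$. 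By the first step the summand $Z^d_j-Z^{d+1}_j$ vanishes unless $j\in\cI_{i_d}$, so only those $d$ with $j\in\cI_{i_d}$ and $t_j\le d\le k-1$ actually contribute, and since $t_j\ge k-\tau$ all such $d$ lie in $\{k-\tau,\dots,k-1\}$.

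It remains to merge these row-wise sums into one index set $J(k)$. The point requiring care is that the several rows comprising one block $\cI_{i_d}$ carry a common read time at iteration $k$: whatever delayed copy of agent $i_d$'s variables enters iteration $k$, it is one consistent snapshot taken at a single past iteration, because an agent broadcasts its freshly updated primal variable and incident dual variables together; for rows of $\cI_{i_d}$ not used at iteration $k$ we may, without loss of generality, assign them that same snapshot. Denote this common value by $t(d)$, so that for every $j\in\cI_{i_d}$ the condition $d\ge t(d)$ is equivalent to $d\ge t_j$. Now set
\eq{
J(k):=\big\{\,d\in\{k-\tau,\dots,k-1\}\ :\ d\ge t(d)\,\big\}\ \subseteq\ \{k-1,\dots,k-\tau\}.
}
Then, for any row $j$, using that terms with $j\notin\cI_{i_d}$ vanish,
\eq{
\Big(\sum_{d\in J(k)}\big(Z^d-Z^{d+1}\big)\Big)_j
=\sum_{d\in J(k),\, j\in\cI_{i_d}}\big(Z^d_j-Z^{d+1}_j\big)
=\sum_{d=t_j}^{k-1}\big(Z^d_j-Z^{d+1}_j\big)
=\widehat{Z}^k_j-Z^k_j,
}
which is exactly \eqref{eqn:inconsist}.

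The main obstacle is precisely the block-consistency of read times invoked in the last paragraph. In the scalar-coordinate framework of \cite{Peng_2015_AROCK} each coordinate block carries a single delay, so this holds automatically; here an agent owns several rows (its primal iterate and the dual variables of its outgoing edges) whose delays $\tau^k_j$ and $\delta^k_e$ are a priori unrelated, so one must invoke (or fold into Assumption~\ref{ass-delay}) the natural communication model in which each agent transmits its freshly updated variables as a single bundle. Granting that, everything else is the elementary telescoping bookkeeping above.
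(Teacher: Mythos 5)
Your proof is correct and is essentially the telescoping argument that the paper delegates to \cite{Peng_2015_AROCK}: the paper itself offers no proof of Lemma~\ref{lemma-hat-z-relation}, only the citation, and in that reference the identity \eqref{eqn:inconsist} is in effect the \emph{definition} of a valid inconsistent read under atomic block updates. What you add --- and it is a genuine observation, not a defect of your argument --- is that the per-row delays $\tau^k_j$, $\delta^k_e$ entering \eqref{zhat} are a priori unrelated even for rows owned by the same agent, whereas the existence of a single index set $J(k)$ requires each past update $Z^d-Z^{d+1}$ (supported on the whole block $\cI_{i_d}$ by \eqref{Si}) to be either entirely captured or entirely missed by the read at iteration $k$; if one agent's primal row were read post-update while one of its dual rows were read pre-update relative to the same iteration $d$, no choice of $J(k)$ could satisfy \eqref{eqn:inconsist} for both rows simultaneously. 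Your resolution --- that an agent transmits its freshly updated primal and dual variables as one bundle, and that rows of $\cI_{i_d}$ not actually read at iteration $k$ may be assigned the same snapshot without loss of generality since they do not enter $\cT_{i_k}\widehat{Z}^k$ --- is precisely the implicit atomicity hypothesis under which the cited lemma holds, so flagging it as something to fold into Assumption~\ref{ass-delay} is appropriate rather than a gap. The remaining bookkeeping (row-wise telescoping, vanishing of terms with $j\notin\cI_{i_d}$, and $J(k)\subseteq\{k-\tau,\dots,k-1\}$ from the delay bound) is exactly right.
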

}
\noindent {The proof of relation \eqref{eqn:inconsist} can be referred to \cite{Peng_2015_AROCK}.
}

\vspace{1mm}
\subsubsection{Convergence analysis} Now we introduce an assumption about the activation probability of each agent.

\begin{assumption}\label{assum:random}
	For any $k>0$, let  $i_k$ be the index of the agent that is responsible for the $k$-th completed update. It is assumed that each $i_k$ is a random variable. The random variable $i_k$ is independent of $i_1,i_2,\cdots,i_{k-1}$ as $$P(i_k=i)=:q_i>0,$$
	where $q_i$'s are constants.
\end{assumption}

This assumption is satisfied under either of the following scenarios: (i) every agent $i$ is activated following an independent Poisson process with parameter $\lambda_i$, and any computation occurring at  agent $i$ is instant, leading to $q_i = \lambda_i / (\sum_{i=1}^n \lambda_i)$; (ii) every agent $i$ runs continuously, and the duration of each round of computation follows the exponential distribution $\exp(\beta_i)$, leading to $q_i=\beta_i^{-1} / (\sum_{i=1}^n \beta_i^{-1})$. {Scenarios (i) and (ii) often appear as assumptions in the existing literature.}

\begin{definition}
	Let $(\Omega,\cF,P)$ be the probability space we work with, where $\Omega$, $\cF$ and $P$ are the sample space, $\sigma$-algebra and the probability measure, respectively. Define
	$$\cZ^k:=\sigma (Z^0,\widehat{Z}^0,Z^1,\widehat Z^1,\cdots,Z^k,\widehat Z^k)$$
	 to be the $\sigma$-algebra generated by $Z^0,\widehat Z^0,\cdots,Z^k,\widehat Z^k$.
\end{definition}
\begin{assumption}\label{ass-4}
	Throughout our analysis, we assume
	\begin{equation}
	P(i_k=i|\cZ^k)=P(i_k=i)=q_i,\quad\forall i,k.
	\end{equation}
\end{assumption}
{\begin{remark}
	Assumption~\ref{ass-4} 
	indicates that the index responsible for the $k$-th completed update, $i_k$, is independent of the delays in different rows of $\widehat Z^k$.  Although not always practical, it is a key assumption for our proof to go through. One of the cases for this assumption to hold is when every row of $\widehat Z^k$ always has the maximum delay $\tau$. In reality, what happens is between this worst case and the no-delay case. Besides, Assumption \ref{ass-4} is also a common assumption in the recent literature of stochastic asynchronous algorithms; see \cite{Peng_2015_AROCK} and the references therein.
	
\end{remark}}

{Now we are ready to prove the convergence of Algorithm~\ref{alg:asyn}. For simplicity, we define }
\begin{align}
\bar{Z}^{k+1}&:=Z^k-\eta\, \cS\, \widehat{Z}^k,\quad q_{\min}:=\min_i \{q_i\}>0,\label{zbar}\\
\lambda_{\max}&:=\lambda_{\max}(M),\quad \hspace{4.5mm}\lambda_{\min}:=\lambda_{\min}(M),\nonumber
\end{align}
{where $\eta>0$ is some constant, and $M\succ 0$ is defined in \eqref{M=G/alpha}.}
{Note that $\bar{Z}^{k+1}$ is the full update as if all agents synchronously compute at the $k$-th iteration. It is used only for analysis.}

\begin{lemma}\label{lemma:a-avg}Define $\cQ := I - \cP$. When $\cP:\RR^{(m+n)\times p}\to \RR^{(m+n)\times p}$ is nonexpansive under the norm $\|\cdot\|_A$, we have
\begin{equation}\label{eqn:alpha_avg}
\langle Z-\widetilde{Z},\cQ Z- \cQ\,\widetilde{Z}\rangle_A \ge \frac{1}{2}\|\cQ Z- \cQ\,\widetilde{Z}\|_A^2,
\end{equation}
where $Z,\widetilde{Z}\in \RR^{(m+n)\times p}$ are two arbitrary matrices.
\end{lemma}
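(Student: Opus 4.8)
The plan is to recognize that the stated inequality is exactly the characterization of a $\tfrac12$-cocoercive (equivalently, firmly nonexpansive-type) operator, and to derive it from the nonexpansiveness of $\cP$ via the standard polarization identity. Concretely, I would set $u := Z - \widetilde Z$ and observe that $\cQ Z - \cQ\widetilde Z = u - (\cP Z - \cP\widetilde Z)$; writing $v := \cP Z - \cP\widetilde Z$, the claim becomes $\langle u, u - v\rangle_A \ge \tfrac12\|u-v\|_A^2$, which after expanding both sides is equivalent to $\|u\|_A^2 \ge \|v\|_A^2$, i.e. $\|\cP Z - \cP\widetilde Z\|_A \le \|Z - \widetilde Z\|_A$. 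That is precisely the nonexpansiveness hypothesis on $\cP$ under $\|\cdot\|_A$, so the lemma follows.

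The key steps, in order, would be: first, expand $\|\cQ Z - \cQ\widetilde Z\|_A^2 = \|u - v\|_A^2 = \|u\|_A^2 - 2\langle u, v\rangle_A + \|v\|_A^2$ using bilinearity of $\langle\cdot,\cdot\rangle_A$ (which is a genuine inner product since $A \succ 0$ by Definition~\ref{defi:4}). Second, compute $\langle Z - \widetilde Z, \cQ Z - \cQ\widetilde Z\rangle_A = \langle u, u - v\rangle_A = \|u\|_A^2 - \langle u, v\rangle_A$. Third, substitute these into \eqref{eqn:alpha_avg} and simplify: the inequality $\|u\|_A^2 - \langle u,v\rangle_A \ge \tfrac12\big(\|u\|_A^2 - 2\langle u,v\rangle_A + \|v\|_A^2\big)$ rearranges to $\tfrac12\|u\|_A^2 \ge \tfrac12\|v\|_A^2$. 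Fourth, invoke nonexpansiveness of $\cP$: $\|v\|_A = \|\cP Z - \cP\widetilde Z\|_A \le \|Z - \widetilde Z\|_A = \|u\|_A$, which gives the required bound.

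There is really no substantive obstacle here — this is a textbook identity (it is the equivalence between $\tfrac12$-averagedness / firm nonexpansiveness of $I - \cQ = \cP$... wait, more precisely it says $\cP$ nonexpansive $\iff$ $I-\cP$ is $\tfrac12$-cocoercive). The only things to be careful about are: (i) making sure all manipulations use that $\langle\cdot,\cdot\rangle_A$ is symmetric and bilinear, which holds because $A$ is symmetric positive definite; and (ii) getting the algebra of the rearrangement right so that the cross terms $\langle u,v\rangle_A$ cancel cleanly, leaving only the norm-comparison that nonexpansiveness supplies. I would present it as a three-line computation followed by one application of the hypothesis.

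One stylistic note for the writeup: I would state it with $u = Z-\widetilde Z$ and $v = \cP Z - \cP\widetilde Z$ to keep the display compact, then conclude with ``$\|v\|_A \le \|u\|_A$ by nonexpansiveness of $\cP$, hence \eqref{eqn:alpha_avg} holds.'' This keeps the proof to a few lines and makes transparent that the lemma is simply repackaging nonexpansiveness in the cocoercivity form that will be convenient for the subsequent delay-absorbing analysis.
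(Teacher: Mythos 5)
Your proposal is correct and follows essentially the same route as the paper: both expand $\|\cP Z-\cP\widetilde Z\|_A^2=\|(Z-\widetilde Z)-(\cQ Z-\cQ\widetilde Z)\|_A^2$ via bilinearity of $\langle\cdot,\cdot\rangle_A$ and then invoke nonexpansiveness of $\cP$ to cancel down to the claimed cocoercivity inequality. Your version with $u,v$ is just a notational repackaging of the paper's one-display computation.
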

\begin{proof} By nonexpansiveness of $\cP$, we have
\eq{\label{xlhadu}
\|\cP Z-\cP\,\widetilde Z\|_A^2 \le \| Z-\widetilde Z\|_A^2, \ \ \forall\  Z, \widetilde{ Z} \in \RR^{(m+n)\times p}.
}	
Notice that
\begin{align}
\label{aslkhfa}
&\ \|\cP Z-\cP\,\widetilde Z\|_A^2 = \| Z-\widetilde Z-(\cQ Z-\cQ\,\widetilde Z)\|^2_A\nnb
=&\ {\| Z\hspace{-0.8mm}-\hspace{-0.8mm}\widetilde Z\|^2_A \hspace{-0.8mm}-\hspace{-0.8mm} 2\langle  Z\hspace{-0.8mm}-\hspace{-0.8mm}\widetilde Z,\cQ Z\hspace{-0.8mm}-\hspace{-0.8mm}\cQ\,\widetilde Z\rangle_A \hspace{-0.8mm}+\hspace{-0.8mm} \|\cQ Z\hspace{-0.8mm}-\hspace{-0.8mm}\cQ\,\widetilde Z\|_A^2}.
\end{align}
Substituting \eqref{aslkhfa} into \eqref{xlhadu}, we achieve \eqref{eqn:alpha_avg}.
\end{proof}

Let $\kappa$ be its condition number of matrix $G$. Since $M=G/\alpha$, $\kappa$ is also the condition number of $M$. The following lemma establishes the relation between $\cS_i \widehat{ Z}$ and $\cS \widehat{ Z}$.
\begin{lemma}
Recall the definition of $\cS, \cS_i$ and $M$ in \eqref{S=I-T} and \eqref{M=G/alpha}. It holds that
\begin{align}
\sum_{i=1}^n\cS_i\widehat{ Z}^k=\cS\,\widehat{ Z}^k, \quad \sum_{i=1}^n\|\cS_i\widehat{ Z}^k\|_M^2\leq \kappa\|\cS\,\widehat{ Z}^k\|_M^2.\label{eqn:bound_lemma1}
\end{align}
\end{lemma}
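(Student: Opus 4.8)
The plan is to deduce both claims from a single structural fact: the row-index sets $\cI_i$, $i=1,\dots,n$, partition $\{1,\dots,n+m\}$. I would first check this partition property. Each $\cI_i$ consists of the primal index $i$ together with the indices $n+e$ for $e\in\cL_i$; the singletons $\{i\}$ obviously partition $\{1,\dots,n\}$, and since every edge of $\cG$ lies in exactly one $\cL_i$ by \eqref{Li}, the sets $\{n+e : e\in\cL_i\}$ partition $\{n+1,\dots,n+m\}$. Hence every $j\in\{1,\dots,n+m\}$ lies in exactly one $\cI_i$. Combining the rowwise descriptions \eqref{Ti} and \eqref{Si}, for any $Z$ the $j$-th row of $\cS_iZ=(I-\cT_i)Z$ equals $(\cS Z)_j=(Z-\cT Z)_j$ when $j\in\cI_i$ and vanishes otherwise. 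Summing over $i$ and invoking the partition, the $j$-th row of $\sum_i\cS_iZ$ equals $(\cS Z)_j$ for every $j$, which gives $\sum_i\cS_i\widehat Z^k=\cS\,\widehat Z^k$ after setting $Z=\widehat Z^k$; this is the first assertion.

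For the norm bound, the obstacle to a direct rowwise argument is that the $M$-weighted norm is not separable over rows, because $M=G/\alpha$ (see \eqref{M=G/alpha}) carries the off-diagonal blocks $V,V^\top$. The plan is to route through the Frobenius norm, which \emph{is} row-separable, and then sandwich. Since $\cS_iZ$ is supported on the rows in $\cI_i$ and agrees there with $\cS Z$, we have $\|\cS_i\widehat Z^k\|_{\rm F}^2=\sum_{j\in\cI_i}\|(\cS\widehat Z^k)_j\|^2$; summing over $i$ and using the partition yields $\sum_i\|\cS_i\widehat Z^k\|_{\rm F}^2=\|\cS\,\widehat Z^k\|_{\rm F}^2$. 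Finally I would invoke the two-sided eigenvalue bound $\lambda_{\min}(M)\|A\|_{\rm F}^2\le\|A\|_M^2\le\lambda_{\max}(M)\|A\|_{\rm F}^2$, valid for every $A\in\RR^{(n+m)\times p}$ (apply the scalar Rayleigh bound to each column of $A$ and sum), to convert between the two norms: $\sum_i\|\cS_i\widehat Z^k\|_M^2\le\lambda_{\max}(M)\sum_i\|\cS_i\widehat Z^k\|_{\rm F}^2=\lambda_{\max}(M)\|\cS\widehat Z^k\|_{\rm F}^2\le\bigl(\lambda_{\max}(M)/\lambda_{\min}(M)\bigr)\|\cS\widehat Z^k\|_M^2=\kappa\|\cS\,\widehat Z^k\|_M^2$.

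No step is genuinely deep here; the two things to get right are the bookkeeping that $\{\cI_i\}$ really partitions the rows, so that in the sums nothing is double-counted or omitted, and the recognition that the cost of moving between the separable Frobenius norm and the non-separable $M$-norm is exactly the condition number $\kappa$ of $G$ (equivalently of $M$), which is why $\kappa$, rather than the constant $1$, appears in \eqref{eqn:bound_lemma1}.
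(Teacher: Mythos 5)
Your proof is correct and follows essentially the same route as the paper: the first identity from the disjointness of the index sets $\cI_i$, and the second by passing to the row-separable Frobenius norm and paying the condition number $\kappa$ via the two-sided eigenvalue bounds $\lambda_{\min}(M)\|\cdot\|_{\rm F}^2\le\|\cdot\|_M^2\le\lambda_{\max}(M)\|\cdot\|_{\rm F}^2$. The only difference is that you spell out the partition bookkeeping explicitly, which the paper leaves as ``immediate from the definitions.''
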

\begin{proof} The first part comes immediately from the definition of $\cS$ and $\cS_i$ in \eqref{S=I-T} and \eqref{i237}. For the second part,
\begin{align}
\sum_{i=1}^n\|\cS_i\widehat{ Z}^k\|_M^2&\leq \sum_{i=1}^n \lambda_{\max}\|\cS_i\widehat{ Z}^k\|^2_{\rm F} = \lambda_{\max}\|\cS\widehat{ Z}^k\|^2_{\rm F}\\
&\leq {\lambda_{\max}\over\lambda_{\min}}\|\cS\widehat{ Z}^k\|^2_M=\kappa\|\cS\widehat{ Z}^k\|^2_M.
\end{align}
\end{proof}
The next lemma shows that the conditional expectation of the distance between $ Z^{k+1}$
and any $ Z^*\in \mathbf{Fix} \cT$ for given $\cZ^k$ has an
upper bound that depends on $\cZ^k$ and $ Z^*$ only. {From now on, the relaxation parameters will be set as
		\begin{equation}\label{eta_i_def}
		\eta_i=\frac{\eta}{nq_i},
		\end{equation}
		where $\eta>0$ is the constant appearing at \eqref{zbar}.}
\begin{lemma}\label{lemma:fund}
Let $\{Z^k\} _{k\geq 0}$ be the sequence generated by Algorithm~
\ref{alg:asyn}.  Then for
any $ Z^*\in \mathbf{Fix} \cT$, we have
\begin{align}
& \mathbb{E}\big(\| Z^{k+1} -  Z^* \|_M^2 \,\big|\, \cZ^k\big) \nnb
  \leq\ &  \| Z^{k} -  Z^*\|_M^2  +{\xi\over n}\sum_{k-\tau\leq d<k}\| Z^d- Z^{d+1}\|_M^2\nonumber\\
&\quad + {1\over n}\left({{\tau}\over \xi}+{\kappa\over nq_{\min}}-{1\over \eta}\right)\| Z^k-\bar  Z^{k+1}\|_M^2,\label{eqn:fund_inquality0}
\end{align}%
where $\mathbb{E}(\cdot\,|\,\cZ^k)$ denotes conditional expectation  on $\cZ^k$ and $\xi$ is an arbitrary positive number.
%
\end{lemma}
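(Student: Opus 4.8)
The plan is to start from the update rule \eqref{i237}, namely $Z^{k+1} = Z^k - \eta_{i_k}\cS_{i_k}\widehat Z^k$, expand $\|Z^{k+1}-Z^*\|_M^2$, and then take conditional expectation with respect to $\cZ^k$. Writing $Z^{k+1}-Z^* = (Z^k-Z^*) - \eta_{i_k}\cS_{i_k}\widehat Z^k$, I would expand the squared $M$-norm into three terms: $\|Z^k-Z^*\|_M^2$, the cross term $-2\eta_{i_k}\langle Z^k-Z^*,\cS_{i_k}\widehat Z^k\rangle_M$, and the quadratic term $\eta_{i_k}^2\|\cS_{i_k}\widehat Z^k\|_M^2$. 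Using Assumption~\ref{ass-4}, $\mathbb E(\,\cdot\mid\cZ^k)$ acts on the random index $i_k$ with $P(i_k=i\mid\cZ^k)=q_i$, so with the choice $\eta_i = \eta/(nq_i)$ from \eqref{eta_i_def} the factor $q_i\eta_i = \eta/n$ is constant and the cross term collapses via $\sum_i \cS_i\widehat Z^k = \cS\widehat Z^k$ (first identity in \eqref{eqn:bound_lemma1}) to $-\tfrac{2\eta}{n}\langle Z^k-Z^*,\cS\widehat Z^k\rangle_M$. Similarly $\sum_i q_i\eta_i^2\|\cS_i\widehat Z^k\|_M^2 = \tfrac{\eta^2}{n^2}\sum_i \tfrac{1}{q_i}\|\cS_i\widehat Z^k\|_M^2 \le \tfrac{\eta^2}{n^2 q_{\min}}\sum_i\|\cS_i\widehat Z^k\|_M^2 \le \tfrac{\eta^2\kappa}{n^2 q_{\min}}\|\cS\widehat Z^k\|_M^2$ by the second identity in \eqref{eqn:bound_lemma1}.

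Next I would handle the cross term, which is the crux of the argument, since it involves $Z^k-Z^*$ paired against $\cS$ evaluated at the \emph{delayed} point $\widehat Z^k$ rather than at $Z^k$. The idea is to split $\langle Z^k-Z^*,\cS\widehat Z^k\rangle_M = \langle \widehat Z^k - Z^*, \cS\widehat Z^k\rangle_M + \langle Z^k - \widehat Z^k, \cS\widehat Z^k\rangle_M$. For the first piece, since $Z^*\in\mathbf{Fix}\cT = \mathbf{Fix}\cO$ and $\cS = I-\cT = \beta(I-\cO) = \beta\cQ$ with $\cQ = I-\cO$ nonexpansive-derived (in the sense of Lemma~\ref{lemma:a-avg}), I apply \eqref{eqn:alpha_avg} with $\widetilde Z = Z^*$ (noting $\cQ Z^* = 0$) to get $\langle \widehat Z^k-Z^*,\cQ\widehat Z^k\rangle_M \ge \tfrac12\|\cQ\widehat Z^k\|_M^2$; translating back through $\cS=\beta\cQ$ this yields a term proportional to $\|\cS\widehat Z^k\|_M^2$ with a favorable (negative, after the $-2\eta/n$ factor) sign, plus the averaging constant $\beta$. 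For the second piece, I would use Lemma~\ref{lemma-hat-z-relation} to write $Z^k - \widehat Z^k = -\sum_{d\in J(k)}(Z^d - Z^{d+1})$, then bound $\langle Z^k-\widehat Z^k,\cS\widehat Z^k\rangle_M$ by Cauchy–Schwarz and Young's inequality with parameter $\xi$: for each $d\in J(k)\subseteq\{k-\tau,\dots,k-1\}$, $2\langle Z^d-Z^{d+1},\cS\widehat Z^k\rangle_M \le \tfrac{1}{\xi}\|Z^d-Z^{d+1}\|_M^2 + \xi\|\cS\widehat Z^k\|_M^2$, and since $|J(k)|\le\tau$ the $\xi\|\cS\widehat Z^k\|_M^2$ terms add up to at most $\tau\xi\|\cS\widehat Z^k\|_M^2$.

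Finally I collect terms. Observing from \eqref{zbar} that $\bar Z^{k+1} = Z^k - \eta\,\cS\widehat Z^k$, we have $\|Z^k - \bar Z^{k+1}\|_M^2 = \eta^2\|\cS\widehat Z^k\|_M^2$, so every appearance of $\|\cS\widehat Z^k\|_M^2$ can be rewritten as $\tfrac{1}{\eta^2}\|Z^k-\bar Z^{k+1}\|_M^2$. Gathering the $\|\cS\widehat Z^k\|_M^2$ contributions — one from the quadratic term ($+\tfrac{\eta^2\kappa}{n^2q_{\min}}$), one from the Young's-inequality slack ($+\tfrac{2\eta}{n}\cdot\tfrac{\tau\xi}{2} = \tfrac{\eta\tau\xi}{n}$ after suitable bookkeeping of the $\beta$ factor), and one negative contribution from the $\alpha$-averaging inequality ($-\tfrac{\eta}{n}\cdot(\text{something})$) — and dividing through by $\eta^2$ produces exactly the coefficient $\tfrac1n\big(\tfrac{\tau}{\xi} + \tfrac{\kappa}{nq_{\min}} - \tfrac1\eta\big)$ in front of $\|Z^k-\bar Z^{k+1}\|_M^2$, while the $\tfrac1\xi\|Z^d-Z^{d+1}\|_M^2$ terms summed over $d\in J(k)$, bounded above by the full sum over $k-\tau\le d<k$, give the middle term $\tfrac{\xi}{n}\sum_{k-\tau\le d<k}\|Z^d-Z^{d+1}\|_M^2$. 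The main obstacle I anticipate is getting the constants to line up exactly — in particular correctly tracking the averaging parameter $\beta$ from Theorem~\ref{thm:average} through the $\alpha$-averaged inequality so that the "good" negative term is precisely $-\tfrac{1}{n\eta}$ and not merely $-\tfrac{c}{n\eta}$ for some $c$; this typically requires using the sharper identity from expanding $\|\bar Z^{k+1}-Z^*\|_M^2$ directly (treating the synchronous step as one averaged-operator step) rather than the crude nonexpansiveness bound, so that the $\beta$ and $(1-\beta)$ factors combine cleanly.
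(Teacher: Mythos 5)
Your proposal is correct and follows essentially the same route as the paper: expand $\|Z^{k+1}-Z^*\|_M^2$ from the update \eqref{i237}, take the conditional expectation using Assumption~\ref{ass-4} and $\eta_i=\eta/(nq_i)$, collapse the cross term via $\sum_i\cS_i\widehat Z^k=\cS\widehat Z^k$, bound the quadratic term by $\kappa/q_{\min}$ through \eqref{eqn:bound_lemma1}, and split the cross term into a delay-free part handled by the operator inequality and a delay part handled by Lemma~\ref{lemma-hat-z-relation} plus Young's inequality with parameter $\xi$.

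The one obstacle you anticipate---tracking $\beta$ through $\cS=\beta(I-\cO)$ so that the favorable term comes out as exactly $-\tfrac{1}{n\eta}$---is not actually an obstacle, and the paper sidesteps it entirely: since $\cT$ is averaged it is in particular nonexpansive, so Lemma~\ref{lemma:a-avg} applies directly with $\cP=\cT$ and $\cQ=\cS$, giving $\langle \widehat Z^k-Z^*,\cS\widehat Z^k-\cS Z^*\rangle_M\ge\tfrac12\|\cS\widehat Z^k\|_M^2$ with the clean constant $\tfrac12$ and hence exactly $-\tfrac{1}{n\eta}$ after multiplying by $\tfrac{2\eta}{n}$ and using $Z^k-\bar Z^{k+1}=\eta\,\cS\widehat Z^k$. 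Your detour through $\cO$ would produce the factor $\tfrac{1}{2\beta}\ge\tfrac12$, which is a stronger lower bound and still implies the stated inequality, so the ``sharper identity'' you contemplate is unnecessary. One small bookkeeping point: orient Young's inequality as the paper does, placing $\xi$ on $\|Z^d-Z^{d+1}\|_M^2$ and $1/\xi$ on $\|Z^k-\bar Z^{k+1}\|_M^2$ (your write-up has them swapped in the displayed step but unswapped in the conclusion); since $\xi>0$ is arbitrary this is purely cosmetic.
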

\begin{proof}
We have
\eq{
&\ \mathbb{E}\left(\| Z^{k+1} -  Z^*\|_M^2\,|\,\cZ^k\right)\\
=&\ \mathbb{E}\Big( \Big\| Z^{k}  -
\frac{\eta}{nq_{i_k}}\cS_{i_k} \widehat{ Z}^{k}-  Z^*
\Big\|_M^2\,\Big|\,\cZ^k\Big)\quad\mbox{(by}~\eqref{i237},\eqref{eta_i_def}\mbox{)} \nonumber
}
\eq{
=&\ \mathbb{E}\Big(  \frac{2\eta}{nq_{i_k}} \big\langle
\cS_{i_k} \widehat{ Z}^{k},  Z^* -  Z^k \big\rangle_M +
\frac{\eta^2}{n^2q_{i_k}^2}
\|\cS_{i_k}\widehat{ Z}^{k}\|_M^2\,\big|\,\cZ^k\Big) \nnb
&\quad +\| Z^k - Z^*\|_M^2 \nnb
\overset{(a)}{=}&\ \frac{2\eta}{n} \sum_{i=1}^n\left\langle \cS_i
\widehat{ Z}^{k},  Z^* -  Z^k \right\rangle_M +
\frac{\eta^2}{n^2}\sum_{i=1}^n\frac{1}{q_i}\|\cS_i\widehat{ Z}^{k}\|_M^2\\
&\quad + \| Z^k -
 Z^*\|_M^2 \nnb
\overset{\eqref{eqn:bound_lemma1}}{=}&\ \| Z^k \hspace{-0.8mm}
- \hspace{-0.8mm}  Z^*\|_M^2 \hspace{-0.5mm} +  \hspace{-0.5mm} \frac{2\eta}{n} \left\langle \hspace{-0.5mm} \cS \widehat{ Z}^{k},  Z^* \hspace{-0.8mm}
- \hspace{-0.8mm}  Z^k \hspace{-0.5mm}
\right\rangle_M  \hspace{-0.5mm} +  \hspace{-0.5mm} \frac{\eta^2}{n^2} \sum_{i=1}^n\frac{1}{q_i}
\|\cS_i\widehat{ Z}^{k}\|_M^2,\label{eqn:equality_inconsistent}
}
where the equality (a) holds because of Assumptions \ref{assum:random} and \ref{ass-4}. On the other hand, note that
\eq{\label{term2}
\sum_{i=1}^n\frac{1}{q_i} \|\cS_i\widehat{ Z}^{k}\|_M^2&\le\frac{1}{q_{\min}}
\sum_{i=1}^n\|\cS_i\widehat{ Z}^{k}\|_M^2\overset{\eqref{eqn:bound_lemma1}}{\leq}
\frac{\kappa}{q_{\min}} \sum_{i=1}^n\|\cS\widehat{ Z}^{k}\|^2_M \nnb
&\overset{\eqref{zbar}}{=}\frac{\kappa}{\eta^2q_{\min}}\| Z^k-\bar{ Z}^{k+1}\|_M^2,
}
and
\eq{
&\left\langle \cS \widehat{ Z}^{k},  Z^* -  Z^k \right\rangle_M\nnb
\overset{\eqref{eqn:inconsist}}{=}&\langle \cS \widehat{ Z}^{k}, Z^* - \widehat{ Z}^k + \sum_{d\in J(k)} ( Z^{d} -
 Z^{d+1})\rangle_M \nnb
\overset{\eqref{zbar}}=&\langle \cS
\widehat{ Z}^{k},  Z^* - \widehat{ Z}^k\rangle_M + \frac{1}{\eta}\sum_{d\in J(k)}\langle
 Z^k-\bar{ Z}^{k+1},  Z^{d} -  Z^{d+1}\rangle_M \nnb
\overset{(b)}{\le}& \langle \cS
\widehat{ Z}^{k}-\cS  Z^*,  Z^* - \widehat{ Z}^k\rangle_M \nnb
&+\frac{1}{2\eta}\sum_{d\in
J(k)}\left(\frac{1}{\xi}\| Z^k-\bar{ Z}^{k+1}\|_M^2+ \xi\| Z^{d} -
 Z^{d+1}\|_M^2\right) \nnb
\overset{\eqref{eqn:alpha_avg}}\le&
\hspace{-1.2mm}-\hspace{-1mm}\frac{1}{2}\hspace{-0.2mm}\|\hspace{-0.2mm}\cS \widehat{ Z}^{k}\hspace{-0.2mm}\|_M^2\hspace{-0.8mm}+\hspace{-0.8mm}\frac{1}{2\eta}\hspace{-1mm}\sum_{d\in
J(k)}\hspace{-2mm}\left(\hspace{-0.5mm}\frac{1}{\xi}\| Z^k\hspace{-0.8mm}-\hspace{-0.8mm}\bar{ Z}^{k+1}\|_M^2 \hspace{-0.8mm}+\hspace{-0.8mm} \xi\| Z^{d} \hspace{-0.8mm}-\hspace{-0.8mm}
 Z^{d+1}\|_M^2\hspace{-1.5mm}\right) \nnb
\overset{\eqref{zbar}}=&-\frac{1}{2\eta^2}\| Z^k-\bar{ Z}^{k+1}\|_M^2+
\frac{|J(k)|}{2\xi\eta}\| Z^k-\bar{ Z}^{k+1}\|_M^2 \nnb
&+\frac{\xi}{2\eta}\sum_{d\in
J(k)}\| Z^{d} -  Z^{d+1}\|_M^2, \label{term1}
}
where the inequality (b) follows from Young's inequality and the fact that $\cS  Z^* = (I - \cT) Z^*=0$ since $ Z^*$ is a fixed point of $\cT$. Substituting~\eqref{term2} and~\eqref{term1} into~\eqref{eqn:equality_inconsistent} and using $J(k)\subseteq \{k-1,...,k-\tau\}$ we get the desired result.\hfill\end{proof}
The term ${\xi\over n}\sum_{k-\tau\leq d<k}\|Z^d-Z^{d+1}\|^2$ in the inequality \eqref{eqn:fund_inquality0} appears because of the delay.
Next we stack $\tau+1$ iterates together to form a new vector and introduce a new metric in order to absorb these terms.

Let $\cH^{\tau+1}=\prod_{i=0}^{\tau}\cH$ be a product space (Recall $\cH$ denotes $\RR^{(m+n)\times p}$, see Definition \ref{defi:4}). For any $(Z_0,\ldots,Z_{\tau}), (\widetilde{Z}_0,\ldots,\widetilde{Z}_{\tau}) \in \cH^{\tau+1}$, we let $\langle\cdot,\cdot \rangle$ be the induced inner product, i.e.,
$ \langle (Z_0,\ldots,Z_{\tau}),(\widetilde{ Z}_0,\ldots,\widetilde{ Z}_{\tau})\rangle  :=\ \sum_{i=0}^{\tau}\langle  Z_i,\widetilde{ Z}_i\rangle_M=\sum_{i=0}^{\tau}\tr(Z_i^\top M \widetilde{Z}_i).
$
Define a matrix $U'\in \RR^{(\tau+1)\times(\tau+1)}$ as
\eq{U' := U_1 + U_2,}
where
\eq{
U_1:=\begin{bmatrix}1 & 0 & \cdots &0\\
	0 & 0 &\cdots & 0\\ \vdots &\vdots & \ddots & \vdots\\
	0 & 0 &\cdots & 0 \end{bmatrix}
}
\eq{
U_2:=&\
\sqrt{\frac{q_{\min}}{\kappa}}\begin{bmatrix} \tau & -\tau &  & \\
-\tau & 2\tau-1 & 1-\tau & \\
 & 1-\tau & 2\tau-3 & 2-\tau  & \\
 & & \ddots & \ddots & \ddots &\\
 & & & -2 & 3  & -1 \\
 & & & &-1 & 1
\end{bmatrix},
}
and let $U=U'\otimes I$ where $\otimes$ represents the Kronecker product and $I$ is the identity operator.
For a given $(A_0,\cdots,A_\tau)\in\cH^{\tau+1}$,
$$(B_0,\cdots,B_\tau)=U(A_0,\cdots,A_\tau)$$
is given by:
\eq{
 B_0&=
 A_0+\tau\sqrt{\frac{q_{\min}}{\kappa}} ( A_0- A_1), \nnb
 B_i &=
\sqrt{\frac{q_{\min}}{\kappa}}[(i\hspace{-0.8mm}-\hspace{-0.8mm} \tau\hspace{-0.8mm}-\hspace{-0.8mm}1) A_{i-1} \hspace{-0.8mm}+\hspace{-0.8mm} (2\tau \hspace{-0.8mm}-\hspace{-0.8mm} 2i \hspace{-0.8mm}+\hspace{-0.8mm} 1) A_i+(i-\tau) A_{i+1}],\nnb
 B_{\tau}&=\sqrt{\frac{q_{\min}}{\kappa}} ( A_{\tau}- A_{\tau-1}),\label{a-b-relation}
}
where the index $i$ for $B_i$ is from $1$ to $\tau+1$.The linear operator $U$ is a self-adjoint and positive definite since $U'$ is
	symmetric and positive definite. We define $\langle\cdot,
	\cdot\rangle_U=\langle\cdot,U\cdot\rangle$ as the $U$-weighted inner
	product and $\|\cdot\|_U$ as the induced norm.
We further let
\begin{align*}
\vZ^k&:=( Z^k, Z^{k-1},\ldots, Z^{k-\tau})\in \cH^{\tau+1},~k\ge 0,\\
\vZ^*&:=( Z^*,\ldots, Z^*)\in\cH^{\tau+1},
\end{align*}
where $ Z^{k}= Z^{0}$ for $k<0$. {We have
\begin{align}
&\hspace{-2mm}\|\vZ^k-\vZ^*\|_U^2 \nnb
\hspace{-4mm}\overset{\eqref{a-b-relation}}{=}&\| Z^{k} \hspace{-1.5mm}-\hspace{-1.5mm}  Z^*\|_M^2 \hspace{-1.1mm}+\hspace{-1.2mm}\sqrt{q_{\min}\over \kappa}\hspace{-1.5mm}\sum_{d=k-\tau}^{k-1}\hspace{-2mm} (d\hspace{-1mm}-\hspace{-1mm}(k\hspace{-1mm}-\hspace{-1mm}\tau)\hspace{-1mm}+\hspace{-1mm}1) \|
 Z^{d} \hspace{-1mm}-\hspace{-1mm}  Z^{d+1}\|_M^2, \label{eqn:xi}
\end{align}
and the following fundamental inequality.
\begin{theorem}[Fundamental inequality]\label{thm:fund_inquality}
Let $\{Z^k\}_{k\geq 0}$ be the sequence generated by Algorithm~\ref{alg:asyn}. Then for any $\vZ^*=( Z^*,\ldots, Z^*)$, it holds that 
\begin{align}
&\hspace{-2mm} \mathbb{E}\big(\|\vZ^{k+1}-\vZ^*\|^2_U \big|\, \cZ^k\big) \nnb
\hspace{-3mm}\leq&\  \|\vZ^k\hspace{-1mm}-\hspace{-1mm}\vZ^*\|^2_U \hspace{-1mm} - \hspace{-1mm} \frac{1}{n}
\hspace{-1mm} \left( \frac{1}{ \eta} \hspace{-1mm}-\hspace{-1mm} \frac{2\tau\sqrt{\kappa}}{n\sqrt{q_{\min}}} \hspace{-1mm}-\hspace{-1mm}
{\kappa\over nq_{\min}} \hspace{-1mm}\right)\hspace{-1mm}
\|\bar{ Z}^{k+1} \hspace{-1mm}-\hspace{-1mm}  Z^k \|_M^2.\label{eqn:fund_inquality}
\end{align}
\end{theorem}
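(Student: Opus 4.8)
The plan is to bootstrap Lemma~\ref{lemma:fund} by absorbing the historical difference terms $\|Z^d-Z^{d+1}\|_M^2$ into the $U$-weighted norm. First I would write the identity \eqref{eqn:xi} at index $k+1$, so that $\|\vZ^{k+1}-\vZ^*\|_U^2 = \|Z^{k+1}-Z^*\|_M^2 + \sqrt{q_{\min}/\kappa}\sum_{d=k+1-\tau}^{k}(d-k+\tau)\|Z^d-Z^{d+1}\|_M^2$, and take $\mathbb{E}(\cdot\,|\,\cZ^k)$ of both sides. For $d=k+1-\tau,\dots,k-1$ the differences $Z^d-Z^{d+1}$ are $\cZ^k$-measurable, so they pass through the conditional expectation untouched; only the top term $d=k$ and the term $\|Z^{k+1}-Z^*\|_M^2$ need to be estimated.

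For the $d=k$ term I would use the coordinate update \eqref{i237} together with the choice \eqref{eta_i_def} of the relaxation parameters to get $Z^{k+1}-Z^k=-\tfrac{\eta}{nq_{i_k}}\cS_{i_k}\widehat Z^k$, hence $\|Z^{k+1}-Z^k\|_M^2=\tfrac{\eta^2}{n^2 q_{i_k}^2}\|\cS_{i_k}\widehat Z^k\|_M^2$; taking the conditional expectation and using Assumptions~\ref{assum:random} and \ref{ass-4} gives $\mathbb{E}\big(\|Z^{k+1}-Z^k\|_M^2\,\big|\,\cZ^k\big)=\tfrac{\eta^2}{n^2}\sum_{i=1}^n \tfrac1{q_i}\|\cS_i\widehat Z^k\|_M^2$, and the right-hand side is $\le \tfrac{\kappa}{n^2 q_{\min}}\|Z^k-\bar{Z}^{k+1}\|_M^2$ by the same chain of inequalities as in \eqref{term2} (which relies on \eqref{eqn:bound_lemma1} and the definition of $\bar{Z}^{k+1}$). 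For the term $\mathbb{E}\big(\|Z^{k+1}-Z^*\|_M^2\,\big|\,\cZ^k\big)$ I would invoke Lemma~\ref{lemma:fund} with the specific free parameter $\xi=n\sqrt{q_{\min}/\kappa}$, so that its coefficient $\tfrac{\xi}{n}$ in front of each $\|Z^d-Z^{d+1}\|_M^2$ becomes exactly $\sqrt{q_{\min}/\kappa}$.

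Adding the three contributions, the coefficient multiplying $\|Z^d-Z^{d+1}\|_M^2$ is $\sqrt{q_{\min}/\kappa}(d-k+\tau)+\sqrt{q_{\min}/\kappa}$ for the interior indices $d=k+1-\tau,\dots,k-1$, and $\sqrt{q_{\min}/\kappa}$ alone for the boundary index $d=k-\tau$; in every case this equals $\sqrt{q_{\min}/\kappa}(d-k+\tau+1)$, which is precisely the coefficient appearing in \eqref{eqn:xi} at index $k$. Hence the historical terms recombine exactly into $\|\vZ^k-\vZ^*\|_U^2$, and what remains is a single multiple of $\|Z^k-\bar{Z}^{k+1}\|_M^2$ with coefficient $\tfrac{\tau\sqrt{\kappa}}{n^2\sqrt{q_{\min}}}$ (from the $d=k$ term) plus $\tfrac1n\big(\tfrac{\tau}{\xi}+\tfrac{\kappa}{nq_{\min}}-\tfrac1\eta\big)$ (from Lemma~\ref{lemma:fund}); substituting $\xi=n\sqrt{q_{\min}/\kappa}$ and collecting terms produces $-\tfrac1n\big(\tfrac1\eta-\tfrac{2\tau\sqrt{\kappa}}{n\sqrt{q_{\min}}}-\tfrac{\kappa}{nq_{\min}}\big)$, which is exactly \eqref{eqn:fund_inquality}.

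The main obstacle will be the bookkeeping in the combination step: one has to track the coefficients of all $\tau$ historical difference terms, keeping the two boundary indices $d=k-\tau$ and $d=k$ separate from the interior ones, and verify that the index-shifted sum coming from $\|\vZ^{k+1}-\vZ^*\|_U^2$, together with the $\xi$-term of Lemma~\ref{lemma:fund}, reproduces the index-$k$ sum in \eqref{eqn:xi} term by term. This matching is exactly what forces the choice $\xi=n\sqrt{q_{\min}/\kappa}$ and, implicitly, the staircase structure of $U_2$. The only genuinely new estimate beyond Lemma~\ref{lemma:fund} is the bound $\mathbb{E}\big(\|Z^{k+1}-Z^k\|_M^2\,\big|\,\cZ^k\big)\le \tfrac{\kappa}{n^2 q_{\min}}\|Z^k-\bar{Z}^{k+1}\|_M^2$, and it reuses \eqref{eqn:bound_lemma1}.
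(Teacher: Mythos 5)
Your proposal is correct and follows essentially the same route as the paper's proof: expand $\|\vZ^{k+1}-\vZ^*\|_U^2$ via \eqref{eqn:xi} at index $k+1$, bound the $d=k$ term through \eqref{i237} and \eqref{term2}, invoke Lemma~\ref{lemma:fund} with the choice $\xi=n\sqrt{q_{\min}/\kappa}$, and recombine the historical difference terms into $\|\vZ^k-\vZ^*\|_U^2$. The coefficient bookkeeping, including the boundary index $d=k-\tau$, matches the paper's computation exactly.
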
}
\begin{proof}
Let $\xi=n\sqrt{{q_{\min}}/{\kappa}}$. We have
\eq{
&\ \mathbb{E} (\|\vZ^{k+1}-\vZ^*\|^2_U | \cZ^k)  \nnb
\overset{\eqref{eqn:xi}}= &\  \mathbb{E} (\hspace{-0.3mm}\|\hspace{-0.5mm} Z^{k\hspace{-0.3mm}+\hspace{-0.3mm}1} \hspace{-0.8mm}-\hspace{-0.8mm}  Z^*\hspace{-0.5mm}\|_M^2| \cZ^k \hspace{-0.3mm}) \nnb
&\quad + \xi\textstyle{\sum_{d=k\hspace{-0.2mm}+\hspace{-0.2mm}1\hspace{-0.2mm}-\hspace{-0.2mm}\tau}^{k} \hspace{-4mm} \frac{d\hspace{-0.2mm}-\hspace{-0.2mm}(k\hspace{-0.2mm}-\hspace{-0.2mm}\tau)}{n}} \mathbb{E} (\hspace{-0.5mm}\|\hspace{-0.5mm}  Z^{ d} \hspace{-0.8mm}-\hspace{-0.8mm}
 Z^{d\hspace{-0.3mm}+\hspace{-0.3mm}1}\hspace{-0.5mm}\|_M^2 | \cZ^k\hspace{-0.5mm}) \nnb
  \overset{\eqref{i237}}= &\ \mathbb{E} (\| Z^{k+1} -
   Z^*\|_M^2| \cZ^k) + \textstyle{\frac{\xi\tau}{n}}
  \mathbb{E}(\textstyle{\frac{\eta^2}{n^2q_{i_k}^2}}\| S_{i_k}\widehat  Z^k\|_M^2|\cZ^k) \nnb
  &\quad +   \xi\textstyle{\sum_{d=k+1-\tau}^{k-1}} \frac{d\hspace{-0.2mm}-\hspace{-0.2mm}(k\hspace{-0.2mm}-\hspace{-0.2mm}\tau)}{n} \|  Z^{d} -  Z^{d+1}\|_M^2 \nnb
          \overset{\eqref{term2}}{\le} &\ \mathbb{E} (\| Z^{k+1} -  Z^*\|_M^2| \cZ^k) +
          \textstyle{\frac{\xi\tau\kappa}{n^3q_{\min}}} \|  Z^{k } -
          \bar{ Z}^{k+1}\|_M^2 \nnb
          &\quad +   \xi\textstyle{\sum_{d=k+1-\tau}^{k-1}} \frac{d\hspace{-0.2mm}-\hspace{-0.2mm}(k\hspace{-0.2mm}-\hspace{-0.2mm}\tau)}{n} \|  Z^{d} -  Z^{d+1}\|_M^2 \nnb
  \overset{\eqref{eqn:fund_inquality0}}\leq &\ \| Z^k \hspace{-1mm}-\hspace{-0.8mm}  Z^*\|_M^2 \hspace{-1mm}+\hspace{-1mm}
  \textstyle{\frac{1}{n}}\hspace{-1mm} \left(\hspace{-1mm}\textstyle{\tau\over \xi} \hspace{-1mm}+\hspace{-1mm}
  \textstyle{\frac{\xi\tau\kappa}{n^2q_{\min}}} \hspace{-1mm}+\hspace{-1mm}
  {\kappa\over nq_{\min}} \hspace{-1mm}-\hspace{-0.8mm} \frac{1}{\eta}\right) \| Z^k \hspace{-1mm}-\hspace{-0.8mm}
  \bar{ Z}^{k+1}\|_M^2   \nnb
&\textstyle{+\frac{\xi}{n}\sum_{k-\tau\leq d<k}\|{ Z}^{d} - { Z}^{d+1}\|_M^2}   \nnb
& \textstyle{+
\xi\sum_{d=k+1-\tau}^{k-1} \frac{d-(k-\tau)}{n} \|  Z^{d} -  Z^{d+1}\|_M^2} \nnb
=&\ \| Z^k \hspace{-1mm}-\hspace{-0.8mm}  Z^*\|_M^2 \hspace{-1mm}+\hspace{-1mm}
\textstyle{\frac{1}{n}}\hspace{-1mm} \left(\hspace{-1mm}\textstyle{\frac{\tau}{\xi}} \hspace{-1mm}+\hspace{-1mm}
\textstyle{\frac{\xi\tau\kappa}{n^2q_{\min}}} \hspace{-1mm}+\hspace{-1mm}
{\kappa\over nq_{\min}} \hspace{-1mm}-\hspace{-0.8mm} \frac{1}{\eta}\right) \| Z^k \hspace{-1mm}-\hspace{-0.8mm}
\bar{ Z}^{k+1}\|_M^2   \nnb
&\ +
\textstyle{\frac{\xi}{n}\sum_{d=k-\tau}^{k-1} {(d-(k-\tau)+1)}} \|  Z^{d} -  Z^{d+1}\|_M^2\nnb
 \overset{\eqref{eqn:xi}}= &\ \|\vZ^k-\vZ^*\|^2_U  \hspace{-0.8mm}+\hspace{-0.8mm} \textstyle{\frac{1}{n}}
 \Big(\textstyle{\frac{2\tau\sqrt{\kappa}}{n\sqrt{q_{\min}}}} \hspace{-0.8mm}+\hspace{-0.8mm}
 {\kappa\over nq_{\min}} \hspace{-0.8mm}-\hspace{-0.8mm} \frac{1}{ \eta}\Big) \| Z^k \hspace{-0.8mm}-\hspace{-0.8mm}
 \bar{ Z}^{k+1}\|_M^2.
}
Hence, the desired inequality \eqref{eqn:fund_inquality} holds.
\hfill
\end{proof}
\begin{remark}[Stochastic Fej{\'e}r monotonicity~\cite{combettes2015stochastic}]
From~\eqref{eqn:fund_inquality}, suppose
\eq{\label{28abncmko}
0<\eta<\frac{nq_{\min}}{2\tau
\sqrt{\kappa q_{\min}}+\kappa},
}
then we have
$$\EE(\|\vZ^{k+1}-\vZ^*\|_U^2|\cZ^k)\leq\|\vZ^k-\vZ^*\|_U^2,$$
i.e., the sequence $\{\vZ^k\}_{k\geq0}$ is stochastically Fej{\'e}r monotone, which means the covariance of $\vZ^k$ is non-increasing as the iteation $k$ evolves.
\end{remark}
{Based on Theorem~\ref{thm:fund_inquality}, we have the following corollary:
\begin{corollary}\label{cor:summable}
When $\eta$ satisfies \eqref{28abncmko}, we have
\begin{enumerate}
\item $\sum_{k=0}^{\infty}\| Z^k - \bar{ Z}^{k+1}\|_M^2 < \infty$ a.s.;\\
\item the sequence $\{\|\vZ^k-\vZ^*\|_U^2\}_{k\geq0}$ converges to a $[0, +\infty)$-valued random variable a.s.
\end{enumerate}
\end{corollary}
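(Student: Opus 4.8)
The plan is to recognize the fundamental inequality \eqref{eqn:fund_inquality} as exactly the hypothesis of the Robbins--Siegmund supermartingale convergence lemma. First I would fix any $Z^*\in\mathbf{Fix}\cT$ --- one exists since, under Assumption~\ref{assumfunc}, Problem~\eqref{prob-compact-V} is solvable and its KKT system \eqref{KKT} coincides with $\mathbf{Fix}\cT$, as was already used in Corollary~\ref{synconv} --- and set $\vZ^*=(Z^*,\ldots,Z^*)$. Next I would verify that the scalar
$$c:=\frac{1}{n}\left(\frac{1}{\eta}-\frac{2\tau\sqrt{\kappa}}{n\sqrt{q_{\min}}}-\frac{\kappa}{nq_{\min}}\right)$$
is strictly positive: the assumed bound \eqref{28abncmko} on $\eta$ rearranges to $1/\eta>(2\tau\sqrt{\kappa q_{\min}}+\kappa)/(nq_{\min})=2\tau\sqrt{\kappa}/(n\sqrt{q_{\min}})+\kappa/(nq_{\min})$, so indeed $c>0$. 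With this, \eqref{eqn:fund_inquality} reads
$$\mathbb{E}\big(\|\vZ^{k+1}-\vZ^*\|_U^2\,\big|\,\cZ^k\big)\leq\|\vZ^k-\vZ^*\|_U^2-c\,\|Z^k-\bar{Z}^{k+1}\|_M^2.$$

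Because $U$ is positive definite and $M=G/\alpha\succ0$ by \eqref{M=G/alpha}, the quantities $a_k:=\|\vZ^k-\vZ^*\|_U^2$ and $c\,\|Z^k-\bar{Z}^{k+1}\|_M^2$ are nonnegative and $\cZ^k$-measurable, and $\mathbb{E}\,a_k<\infty$ for every $k$ (by induction from $a_0<\infty$, taking full expectations in the displayed inequality). Hence $\{a_k\}$ is a nonnegative supermartingale from which a summable nonnegative term is subtracted off, which is precisely the situation covered by the Robbins--Siegmund lemma (all additive error terms being zero; cf.\ \cite{combettes2015stochastic}). Applying it gives at once that $a_k$ converges a.s.\ to a $[0,+\infty)$-valued random variable --- this is claim~2 --- and that $\sum_{k=0}^{\infty}c\,\|Z^k-\bar{Z}^{k+1}\|_M^2<\infty$ a.s.; dividing by the positive constant $c$ yields claim~1.

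I do not foresee a substantive obstacle: the content is essentially an application of a standard supermartingale convergence theorem. The only items requiring a line of care are the sign of $c$ (immediate from \eqref{28abncmko}) and the measurability/integrability bookkeeping needed to legitimately invoke the theorem (routine, since $Z^0$ is deterministic and each iteration alters $Z^k$ by one bounded operator application). No estimate beyond Theorem~\ref{thm:fund_inquality} is required.
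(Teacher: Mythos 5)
Your proposal is correct and follows essentially the same route as the paper: the paper's proof likewise observes that the step-size condition \eqref{28abncmko} makes the coefficient of $\|Z^k-\bar{Z}^{k+1}\|_M^2$ in \eqref{eqn:fund_inquality} strictly positive and then invokes the Robbins--Siegmund supermartingale convergence theorem to obtain both conclusions at once. Your additional remarks on the existence of $Z^*\in\mathbf{Fix}\cT$ and the measurability/integrability bookkeeping are sound but are left implicit in the paper.
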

\begin{proof}
The condition \eqref{28abncmko} implies that $\frac{1}{ \eta}-\frac{2\tau\sqrt{\kappa}}{n\sqrt{q_{\min}}} -
{\kappa\over nq_{\min}}>0.$
Applying \cite[Theorem~1]{robbins1985convergence} to \ref{eqn:fund_inquality} directly gives the two conclusions.
\end{proof}}

The following lemma establishes the convergence properties of the sequences $\{\vZ^k\}_{k=1}^\infty$ and $\{ Z^k\}_{k=1}^\infty$.
\begin{lemma}\label{lemma:convergence}
Define $\vS^*=\{( Z^*,\ldots, Z^*)| Z^*\in\mathbf{Fix} \cT\}$, let $( Z^k)_{k\geq0}\subset \cH$ be the sequence generated by Algorithm~\ref{alg:asyn}, $\eta
\in (0, \eta_{\max}]$ for certain $\eta_{\max}$ satisfying \eqref{28abncmko}. Let $\mathscr{Z}(\vZ^k)$ be the set of cluster points of $\{\vZ^k\}_{k\geq 0}$. Then, $\mathscr{Z}(\vZ^k) \subseteq \vS^*$ a.s.
\end{lemma}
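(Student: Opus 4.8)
The plan is to pass to the almost-sure event supplied by Corollary~\ref{cor:summable} and extract three asymptotic ``vanishing'' facts, after which a deterministic subsequence argument plus the continuity of $\cT$ finishes the job. First, Corollary~\ref{cor:summable}(1) gives $\sum_k\|Z^k-\bar Z^{k+1}\|_M^2<\infty$ a.s., hence $\|Z^k-\bar Z^{k+1}\|_M\to 0$ a.s.; since $Z^k-\bar Z^{k+1}=\eta\,\cS\widehat Z^k$ by \eqref{zbar}, this is exactly $\cS\widehat Z^k\to 0$ a.s. Second, \eqref{i237} with $\eta_{i_k}=\eta/(nq_{i_k})\le \eta/(nq_{\min})$ and the bound $\|\cS_{i_k}\widehat Z^k\|_M^2\le\sum_i\|\cS_i\widehat Z^k\|_M^2\le\kappa\|\cS\widehat Z^k\|_M^2$ from \eqref{eqn:bound_lemma1} yield $\|Z^{k+1}-Z^k\|_M\le\frac{\eta\sqrt{\kappa}}{nq_{\min}}\|\cS\widehat Z^k\|_M\to 0$ a.s. Third, Lemma~\ref{lemma-hat-z-relation} and the triangle inequality give $\|\widehat Z^k-Z^k\|_M\le\sum_{d=k-\tau}^{k-1}\|Z^d-Z^{d+1}\|_M$, a sum of $\tau$ (a fixed number of) terms each tending to $0$, so $\widehat Z^k-Z^k\to 0$ a.s. All three statements are intersected into one probability-one event $\Omega'$ on which the rest of the argument is purely deterministic.

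Next I would analyze an arbitrary cluster point of $\{\vZ^k\}$ on a fixed path in $\Omega'$. Suppose $\vZ^{k_j}\to\vW=(W_0,\ldots,W_\tau)\in\cH^{\tau+1}$ along a subsequence. The $l$-th block of $\vZ^{k}$ is $Z^{k-l}$, so $Z^{k_j-l}\to W_l$ for each $l$; since $\|Z^{m}-Z^{m-1}\|_M\to 0$ as $m\to\infty$ and $k_j-l\to\infty$, letting $j\to\infty$ gives $W_l=W_{l+1}$ for $l=0,\ldots,\tau-1$. Hence $\vW=(Z^\infty,\ldots,Z^\infty)$ with $Z^\infty:=W_0$, and in particular $Z^{k_j}\to Z^\infty$; combined with $\widehat Z^{k}-Z^{k}\to 0$ this also gives $\widehat Z^{k_j}\to Z^\infty$.

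The argument then closes by continuity: $\cT$ is nonexpansive (it is the averaged operator of Theorem~\ref{thm:average}), hence continuous in $\|\cdot\|_M$, so $\cS=I-\cT$ is continuous, and therefore $\cS Z^\infty=\lim_j\cS\widehat Z^{k_j}=0$. Thus $Z^\infty=\cT Z^\infty$, i.e. $Z^\infty\in\mathbf{Fix}\cT$, whence $\vW\in\vS^*$. Since the cluster point was arbitrary and the whole construction lives on the single event $\Omega'$ of probability one, we conclude $\mathscr{Z}(\vZ^k)\subseteq\vS^*$ a.s.

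The main obstacle is bookkeeping rather than analysis: one must be careful that the quantity which is shown to vanish is $\cS\widehat Z^k$ evaluated at the \emph{delayed} iterate, not $\cS Z^k$, and Lemma~\ref{lemma-hat-z-relation} (feeding the third vanishing fact) is precisely what reconciles $\widehat Z^k$ with $Z^k$ in the limit. Note that finite-dimensionality lets us replace the usual demiclosedness-of-$(I-\cT)$ principle by plain continuity, which keeps this step short; the Fej\'er-type conclusion of Corollary~\ref{cor:summable}(2) is not needed for this lemma (it will be used subsequently, together with an Opial/separability argument over a countable dense subset of $\mathbf{Fix}\cT$, to upgrade this statement to genuine almost-sure convergence of $\{\vZ^k\}$).
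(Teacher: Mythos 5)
Your proof is correct and follows essentially the same route as the paper's: both extract the three vanishing quantities $\cS\widehat Z^k\to 0$, $Z^{k+1}-Z^k\to 0$, and $\widehat Z^k-Z^k\to 0$ from Corollary~\ref{cor:summable} and Lemma~\ref{lemma-hat-z-relation}, then run a deterministic subsequence argument on a probability-one event to show every cluster point has equal blocks lying in $\mathbf{Fix}\cT$. The only cosmetic difference is that the paper bounds $\|Z^{k_l}-\cT Z^{k_l}\|_M$ via the triangle inequality and invokes the demiclosedness principle, whereas you pass the limit through $\cS$ at the delayed iterate using plain continuity of the nonexpansive $\cT$ --- equivalent in this finite-dimensional setting.
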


\begin{proof}
We take several steps to complete the proof of this lemma.

%
{(i) Firstly, from Corollary~\ref{cor:summable}, we have $ Z^k-\bar { Z}^{k+1}\rightarrow 0$ a.s.. {Since $\| Z^k- Z^{k+1}\|_{\rm F}\leq {{\sqrt{\kappa}}\over nq_{\min}}\| Z^k-\bar Z^{k+1}\|_M$ (c.f.~\eqref{term2}),}
we have $ Z^k- Z^{k+1}\rightarrow 0\text{ a.s.}$ Then
from~\eqref{eqn:inconsist}, we have $\widehat  Z^k- Z^k\rightarrow 0\text{ a.s.}$

(ii) From Corollary \ref{cor:summable}, we have that $(\|\vZ^{k} - {\vZ}^* \|_U^2)_{k\geq 0}$ converges a.s., and so does $(\|\vZ^{k} - {\vZ}^* \|_U)_{k\geq 0}$. Hence, we have $\lim_{k\rightarrow \infty} \|\vZ^{k} - {\vZ}^* \|_U = \gamma$ a.s., where $\gamma$ is a $[0, +\infty )$-valued random variable. Hence, $(\|\vZ^{k} - {\vZ}^* \|_U)_{k\geq 0}$ must be bounded a.s., and so is $(\vZ^k)_{k\geq 0}$.

(iii) We claim that there exists $\widetilde{\Omega} \in \cF$ such that $P(\widetilde{\Omega}) = 1$ and, for every $\omega \in \widetilde{\Omega}$ and every ${\vZ}^* \in \vS^*$, $(\|\vZ^k(\omega) - {\vZ}^*\|_U)_{k\geq 0}$ converges.

The proof follows directly from \cite[Proposition 2.3 (iii)]{combettes2015stochastic}. It is worth noting that $\widetilde \Omega$ in the statement works for all ${\vZ}^* \in \vS^*$, namely, $\Omega$ does not depend on ${\vZ}^*$.

(iv) By~(i),  there exists $\widehat{\Omega} \in \cF$ such that $P(\widehat{\Omega} )=1$ and
$\label{lim-xk} Z^k(w)- Z^{k+1}(w)\rightarrow 0,\quad\forall w\in\widehat\Omega.$
For any $\omega\in\widehat{\Omega}$, let $(\vZ^{k_l}(\omega))_{l\ge0}$ be a convergent subsequence of $(\vZ^k(\omega))_{k\geq 0}$, i.e., $\vZ^{k_l}(\omega) \rightarrow \vZ$, where $\vZ^{k_l}(\omega)  = ( Z^{k_l}(\omega),  Z^{k_l - 1}(\omega)...,  Z^{k_l - \tau}(\omega) )$ and $\vZ = (\vu^0, ..., \vu^{\tau})$. Note that $\vZ^{k_l}(\omega) \rightarrow \vZ$ implies
$ Z^{k_l - j}(\omega) \rightarrow \vu^j,\, \forall j.$ 
Therefore, $\vu^i=\vu^j$, for any $i,j\in\{0,\cdots,\tau\}$ because $ Z^{k_l - i}(\omega) -  Z^{k_l - j}(\omega)\rightarrow 0$. Furthermore, observing $\eta>0$, we have
\begin{equation}\label{lim-xhat}
\begin{aligned}
&\ \textstyle \lim_{l\rightarrow \infty} \widehat  Z^{k_l} (\omega) - \cT \widehat  Z^{k_l}(\omega)\nnb
=&\ \lim_{l\to\infty}S\widehat  Z^{k_l}(\omega)=\lim_{l\to\infty}\frac{1}{\eta}( Z^{k_l}(\omega)-\bar{ Z}^{k_l+1}(\omega))=0.
\end{aligned}
\end{equation}
From the triangle inequality and the nonexpansiveness of $\cT$, it follows that
\begin{align*}
&\| Z^{k_l}(\omega) - \cT Z^{k_l}(\omega)\|_M \\
=& { \| Z^{k_l}\hspace{-0.5mm}(\omega) \hspace{-1mm}-\hspace{-1mm} \widehat  Z^{k_l}\hspace{-0.5mm} (\omega) \hspace{-1mm}+\hspace{-1mm} \widehat  Z^{k_l}\hspace{-0.5mm}(\omega) \hspace{-1mm}-\hspace{-1mm} \cT\widehat  Z^{k_l}\hspace{-0.5mm}(\omega) \hspace{-1mm}+\hspace{-1mm}
\cT\widehat  Z^{k_l}\hspace{-0.5mm}(\omega) \hspace{-1mm}-\hspace{-1mm} \cT Z^{k_l}\hspace{-0.5mm}(\omega)\|_M} \nnb
 \leq& \| Z^{k_l}(\omega) - \widehat  Z^{k_l}(\omega)\|_M + \| \widehat
  Z^{k_l}(\omega) - \cT\widehat  Z^{k_l}(\omega)\|_M \\
 &+ \| \cT\widehat
  Z^{k_l}(\omega) - \cT Z^{k_l}(\omega)\|_M \nnb
 \leq& 2 \| Z^{k_l}(\omega) - \widehat  Z^{k_l} (\omega)\|_M + \| \widehat
  Z^{k_l}(\omega) - \cT\widehat  Z^{k_l}(\omega)\|_M\\
 \leq&\textstyle 2\sum_{d\in J(k_l)}\| Z^{d}(\omega) \hspace{-1mm}-\hspace{-1mm}   Z^{d+1}
 (\omega)\|_M \hspace{-1mm}+\hspace{-1mm} \| \widehat  Z^{k_l}(\omega) \hspace{-1mm}-\hspace{-1mm} \cT\widehat  Z^{k_l}(\omega)\|_M.
\end{align*}
By~\eqref{lim-xk} and \eqref{lim-xhat}, we have from the above inequality that
$\lim_{l\rightarrow \infty}  Z^{k_l}(\omega) - \cT  Z^{k_l}(\omega) = 0.$
Now the demiclosedness principle~\cite[Theorem 4.17]{bauschke2011convex} implies
$\vu^0 \in \textbf{Fix }\cT$, and this implies the statement of the lemma.}
\hfill\end{proof}
From Lemma~\ref{lemma:convergence} and Opial's Lemma~\cite{opial1967weak}, we have the convergence theorem of the asynchronous algorithm.
\begin{theorem}\label{thm:async-convergence2}
Let $( Z^k)_{k\geq0}\subset \cH$ be the sequence generated by Algorithm~\ref{alg:asyn}, $\eta
\in (0, \eta_{\max}]$ for certain $\eta_{\max}$ satisfying \eqref{28abncmko}. If Assumptions \ref{assum:random}, \ref{ass-delay} and \ref{ass-4} hold,
then $( Z^k)_{k\geq 0}$ converges to a $\mathbf{Fix} \cT$-valued random variable a.s..
\end{theorem}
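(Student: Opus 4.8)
The plan is to combine the almost-sure results already established for the stacked sequence $\{\vZ^k\}$ with a Fej\'er-type (Opial) argument to upgrade subsequential convergence to full convergence. From Lemma~\ref{lemma:convergence} we know that, almost surely, every cluster point of $\{\vZ^k\}$ lies in $\vS^*=\{(Z^*,\ldots,Z^*):Z^*\in\mathbf{Fix}\,\cT\}$. From Corollary~\ref{cor:summable}(ii) together with step (iii) in the proof of Lemma~\ref{lemma:convergence}, we know there is an event $\widetilde\Omega$ of probability one on which, for \emph{every} $\vZ^*\in\vS^*$, the scalar sequence $\|\vZ^k(\omega)-\vZ^*\|_U$ converges. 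These are exactly the two hypotheses of the stochastic Opial lemma.

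First I would fix $\omega$ in the intersection $\widehat\Omega\cap\widetilde\Omega$ (still probability one), so that on this event: (a) $\|\vZ^k(\omega)-\vZ^*\|_U$ converges for every $\vZ^*\in\vS^*$, and (b) every cluster point of $\{\vZ^k(\omega)\}$ belongs to $\vS^*$. Since $\{\vZ^k(\omega)\}$ is bounded (step (ii) of Lemma~\ref{lemma:convergence}), it has at least one cluster point. Opial's lemma then yields that $\{\vZ^k(\omega)\}$ has a \emph{unique} cluster point $\vZ^\infty(\omega)\in\vS^*$, hence $\vZ^k(\omega)\to\vZ^\infty(\omega)$. Writing $\vZ^\infty(\omega)=(Z^\infty(\omega),\ldots,Z^\infty(\omega))$ with $Z^\infty(\omega)\in\mathbf{Fix}\,\cT$, and reading off the first block coordinate, we get $Z^k(\omega)\to Z^\infty(\omega)$. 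Measurability of the limit $Z^\infty$ follows since it is the a.s.\ pointwise limit of the measurable maps $Z^k$, so $Z^\infty$ is a $\mathbf{Fix}\,\cT$-valued random variable.

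The only mildly delicate point is bookkeeping: Opial's lemma is usually stated for a single target point, so I would either invoke the ``all cluster points in $S$ + distance to every point of $S$ converges'' version directly, or argue by contradiction --- if $\vZ^a,\vZ^b$ were two distinct cluster points in $\vS^*$, then $\lim_k\|\vZ^k-\vZ^a\|_U$ and $\lim_k\|\vZ^k-\vZ^b\|_U$ both exist, and passing to subsequences converging to $\vZ^a$ and to $\vZ^b$ and expanding $\|\vZ^k-\vZ^a\|_U^2-\|\vZ^k-\vZ^b\|_U^2=-2\langle\vZ^k,\vZ^a-\vZ^b\rangle_U+\|\vZ^a\|_U^2-\|\vZ^b\|_U^2$ forces $\langle\vZ^k,\vZ^a-\vZ^b\rangle_U$ to converge, and evaluating its limit along the two subsequences gives $\|\vZ^a-\vZ^b\|_U^2=0$, a contradiction. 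I expect this contradiction/uniqueness argument to be the main (and essentially only) obstacle; everything else is an immediate assembly of Lemma~\ref{lemma:convergence}, Corollary~\ref{cor:summable}, and the boundedness already proved.
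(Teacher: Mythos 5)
Your proposal is correct and follows essentially the same route as the paper, which proves Theorem~\ref{thm:async-convergence2} in one line by combining Lemma~\ref{lemma:convergence} (all cluster points lie in $\vS^*$ a.s.) with Opial's lemma, relying implicitly on the a.s.\ convergence of $\|\vZ^k-\vZ^*\|_U$ for every $\vZ^*\in\vS^*$ established in step (iii) of that lemma's proof. Your expanded bookkeeping --- intersecting the two probability-one events, invoking boundedness for existence of a cluster point, the uniqueness-by-contradiction argument via $\langle\vZ^k,\vZ^a-\vZ^b\rangle_U$, and the measurability of the pointwise limit --- is exactly the detail the paper leaves to the cited reference.
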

This theorem guarantees that, if we run the asynchronous algorithm~\ref{alg:asyn} with an arbitrary starting point $ Z^0$, then the sequence $\{ Z^k\}$ produced will converge to one of the solutions to problem~\eqref{prob:saddle} almost surely. From the upper bound of $\eta_{\max}$, we can see that we must relax the update more if maximum delay $\tau$ becomes larger, or if the matrix $M$ becomes more ill-conditioned. The relative computation speed of the slowest agent will also affect the relaxation parameter.

\vspace{-3mm}
\subsection{New step size rules}\label{sec:stepsize}
For both the synchronous and asynchronous algorithms, we require the step size $\alpha$ be less than ${2\rho_{\min}}/{L}$ (Assumption~\ref{assumfunc}). Both $\rho_{\min}$ and $L$ involve global properties across the network: $\rho_{\min}$ is related to the property of the matrix $V$ and $L$ is related to all the functions $s_i$. Unless they are known \emph{a priori}, we need to apply a consensus algorithm to obtain them.

In this section we introduce a step size $\alpha_i$ for each agent $i$ that depends only on its local properties. The following theorem is a consequence of combining results of~\cite[Lemma 10]{lorenz2015inertial} and the monotone operator theory.
\begin{theorem}
Let the iteration $ Z^{k+1}=\cT Z^k$ be defined as
\begin{equation}
\begin{dcases}
x^{i,k+1}\hspace{-0.8mm}=\hspace{-0.8mm}\prox_{\alpha_i r_i}\hspace{-1mm}\Big(\hspace{-1mm}\sum_{j\in \mathcal{N}_i}\hspace{-1mm} w_{ij}x^{j,k}\hspace{-1mm}-\hspace{-1mm}\alpha_i\nabla s_i(\hspace{-0.2mm}x^{i,k}\hspace{-0.2mm}) \hspace{-1mm}-\hspace{-1.5mm}\sum_{e\in \mathcal{E}_i}\hspace{-1mm} v_{ei}y^{e,k} \hspace{-1mm}\Big),\forall i,\nnb
y^{e,k+1}=y^{e,k}+ \big(v_{ei}x^{i,k} + v_{ej}x^{j,k} \big),~\forall e=(i,j)\in\cL_i,\nonumber
\end{dcases}
\end{equation}
with $\alpha_i=\frac{1}{L_i/\gamma+1-w_{ii}}$ for an arbitrary $0<\gamma<2$. Then under assumption~\ref{assumfunc}, the operator $\cT$ is an averaged operator.
\end{theorem}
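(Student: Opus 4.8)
The plan is to recognize the stated recursion as a \emph{variable-metric} (preconditioned) forward--backward iteration for the same KKT inclusion $0\in\cA Z+\cB Z$ used in Section~\ref{sec:covsyn}, and then to certify that the resulting operator $\cT$ is averaged by the very composition argument behind Theorem~\ref{thm:average}, now carried out in a metric built from the agent-wise step sizes. Concretely, I would set $\Lambda:=\Diag(\alpha_1,\dots,\alpha_n)$, take the preconditioner
\[
\cM:=\begin{bmatrix}\Lambda^{-1}&V^\top\\V&I_m\end{bmatrix}
\]
as the agent-wise counterpart of $M=G/\alpha$ in \eqref{M=G/alpha}, and define $\cT=(\cI+\cM^{-1}\cB)^{-1}(\cI-\cM^{-1}\cA)$ with $\cA$ the primal-gradient part and $\cB$ the nonsmooth-plus-skew-linear part of the KKT operator of \eqref{prob:saddle}. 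Expanding the resolvent row by row and eliminating the intermediate dual variable, as in the derivation of \eqref{alg:cv2} and \eqref{syn-decentralized}, shows that this $\cT$ realizes the displayed iteration: the $(2,2)$ block of $\cM$ being $I_m$ reproduces the dual update, and the $i$-th row of $(\cI+\cM^{-1}\partial\vr)^{-1}$ is a $\prox_{\alpha_i r_i}$. It then remains to prove two things: (i) $\cM\succ0$, so that $\langle\cdot,\cdot\rangle_{\cM}$ is an inner product and $\cM^{-1}\cB$ is maximally monotone in it; and (ii) the forward map $\cI-\cM^{-1}\cA$ is $\theta$-averaged in $\|\cdot\|_{\cM}$ for some $\theta\in(0,1)$. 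Granting (i)--(ii): the resolvent $(\cI+\cM^{-1}\cB)^{-1}$ is firmly nonexpansive (hence $\tfrac12$-averaged) in $\|\cdot\|_{\cM}$, and the composition of two averaged operators is averaged \cite{bauschke2011convex}, so $\cT$ is averaged.

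For (i) I would use the Schur-complement criterion, which reduces $\cM\succ0$ to $\Lambda^{-1}\succ V^\top V=\tfrac12(I-W)$. The structural fact here is a ``signless graph Laplacian'' identity: writing $W=\Diag(w_{ii})+W_{\mathrm{off}}$,
\[
\Diag(1-w_{ii})-\tfrac12(I-W)=\tfrac12\bigl(\Diag(1-w_{ii})+W_{\mathrm{off}}\bigr)\succeq0,
\]
since $x^\top\bigl(\Diag(1-w_{ii})+W_{\mathrm{off}}\bigr)x=\sum_{(i,j)\in\cE}w_{ij}(x_i+x_j)^2\ge0$ (using $\sum_{j\ne i}w_{ij}=1-w_{ii}$ from double stochasticity). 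Because $\alpha_i^{-1}=L_i/\gamma+(1-w_{ii})>1-w_{ii}$, this gives $\Lambda^{-1}\succ\Diag(1-w_{ii})\succeq\tfrac12(I-W)$, hence $\cM\succ0$; the same computation moreover yields the sharper bound $\Lambda^{-1}-V^\top V\succeq\tfrac1\gamma\Diag(L_i)$, which is exactly what (ii) needs.

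For (ii) I would note that the only nonzero block of $\cA$ is the block-diagonal gradient $\nabla\vs$, and that by the Baillon--Haddad theorem (Assumption~\ref{assumfunc}(1)--(2)) each $\nabla s_i$ is $L_i^{-1}$-cocoercive, so $\langle X-\widetilde X,\nabla\vs(X)-\nabla\vs(\widetilde X)\rangle\ge\|\nabla\vs(X)-\nabla\vs(\widetilde X)\|_{\Diag(L_i^{-1})}^2$. Substituting this into
\[
\|(\cI-\cM^{-1}\cA)Z-(\cI-\cM^{-1}\cA)\widetilde Z\|_{\cM}^2=\|Z-\widetilde Z\|_{\cM}^2-2\langle Z-\widetilde Z,\cA Z-\cA\widetilde Z\rangle+\|\cA Z-\cA\widetilde Z\|_{\cM^{-1}}^2
\]
and using that the $(1,1)$ block of $\cM^{-1}$ is the Schur-complement inverse $(\Lambda^{-1}-V^\top V)^{-1}$ (so $\|\cA Z-\cA\widetilde Z\|_{\cM^{-1}}^2=\|\nabla\vs(X)-\nabla\vs(\widetilde X)\|_{(\Lambda^{-1}-V^\top V)^{-1}}^2$), the $\theta$-averagedness inequality $\|(\cI-\cM^{-1}\cA)Z-(\cI-\cM^{-1}\cA)\widetilde Z\|_{\cM}^2\le\|Z-\widetilde Z\|_{\cM}^2-\tfrac{1-\theta}{\theta}\|\cM^{-1}(\cA Z-\cA\widetilde Z)\|_{\cM}^2$ is seen to follow from $(\Lambda^{-1}-V^\top V)^{-1}\preceq2\theta\Diag(L_i^{-1})$, i.e.\ $\Lambda^{-1}-V^\top V\succeq\tfrac1{2\theta}\Diag(L_i)$. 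By the bound from (i) this holds with $\theta=\gamma/2$, and $\theta\in(0,1)$ precisely because $0<\gamma<2$ --- this is the role of the restriction on $\gamma$. This is the calculation underlying \cite[Lemma~10]{lorenz2015inertial}; invoking it together with the standard facts about resolvents and compositions of averaged operators completes the proof and, in fact, shows $\cT$ is $\tfrac{2}{4-\gamma}$-averaged.

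I expect the crux to be arranging (i) and (ii) at once: larger agent-wise steps $\alpha_i$ shrink $\Lambda^{-1}$ and push against both $\cM\succ0$ and the cocoercivity margin, so the content of the theorem is that $\alpha_i^{-1}=L_i/\gamma+1-w_{ii}$ is exactly calibrated --- with one free parameter $\gamma$ --- to clear both, the decisive structural input being the signless-Laplacian inequality $\Diag(1-w_{ii})\succeq V^\top V$, which separates the contribution of the mixing matrix $W$ from the local smoothness constants $L_i$.
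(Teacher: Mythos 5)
Your overall strategy---recasting the iteration as a variable-metric forward--backward step, certifying $\cM\succ 0$ by a Schur complement together with the signless-Laplacian bound $V^\top V=\tfrac12(I-W)\preceq \Diag(1-w_{ii})$, and then combining the Baillon--Haddad cocoercivity of $\nabla\vs$ with the composition rule for averaged operators---is exactly the content of the result the paper invokes (it gives no proof of its own beyond citing \cite[Lemma~10]{lorenz2015inertial}), and those individual computations, including the identity $x^\top(\Diag(1-w_{ii})+W_{\mathrm{off}})x=\sum_{(i,j)\in\cE}w_{ij}(x_i+x_j)^2$ and the bound $\Lambda^{-1}-V^\top V\succeq\tfrac1\gamma\Diag(L_i)$, are all correct.

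The gap is in the identification step, and it is not cosmetic: the operator $\cT=(\cI+\cM^{-1}\cB)^{-1}(\cI-\cM^{-1}\cA)$ with your $\cM=\bigl[\begin{smallmatrix}\Lambda^{-1}&V^\top\\ V&I_m\end{smallmatrix}\bigr]$ does \emph{not} realize the displayed iteration. Writing the fixed-point step as $\cM Z^k-\cA Z^k\in \cM Z^{k+1}+\cB Z^{k+1}$ (with $\cB$ having off-diagonal blocks $V^\top$ and $-V$ so that the $Y$-row reproduces $Y^{k+1}=Y^k+VX^k$), the $X$-row reads $\Lambda^{-1}X^{k+1}+\partial\vr(X^{k+1})\ni \Lambda^{-1}X^k-(I-W)X^k-V^\top Y^k-\nabla\vs(X^k)$, i.e.
\begin{equation}
x^{i,k+1}=\prox_{\alpha_i r_i}\Big(x^{i,k}-\alpha_i\big[(I-W)X^k\big]_i-\alpha_i\big[V^\top Y^k\big]_i-\alpha_i\nabla s_i(x^{i,k})\Big),
\end{equation}
in which the mixing term and the dual term carry the factor $\alpha_i$. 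The theorem's iteration instead has $\sum_j w_{ij}x^{j,k}-\sum_e v_{ei}y^{e,k}=x^{i,k}-[(I-W)X^k]_i-[V^\top Y^k]_i$, i.e.\ coefficient $1$ on both; the two agree only when all $\alpha_i=1$. Everything downstream (the $(1,1)$ Schur block of $\cM^{-1}$, the $\gamma/2$-averagedness of the forward map, the constant $\tfrac{2}{4-\gamma}$) is therefore a correct proof about a \emph{different} operator. Nor can the mismatch be patched by adjusting $\cM$ within your framework: matching the stated dual update forces $\cM_{21}=V$, while matching the stated primal update forces $\cM_{12}=-\Lambda^{-1}V^\top$, and these are incompatible with symmetry of $\cM$ (equivalently, a single dual scale $\sigma_e$ per edge cannot satisfy $\sigma_e=\alpha_i=\alpha_j$ for both endpoints of $e$ when the $\alpha_i$ are heterogeneous). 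So either the theorem's displayed recursion must be re-derived from the correct preconditioner---in which case the $\alpha_i$ factors appear on the $W$- and $V^\top Y$-terms as above---or a genuinely different (non-symmetric-metric) argument is needed; as written, your proof does not establish the statement for the operator the theorem actually defines.
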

Because this new operator $\cT$ is still an averaged operator, the convergence results, Corollary~\ref{synconv} and Theorem~\ref{thm:async-convergence2}, still hold.
%

{\subsection{Allowing unbounded delays}\label{sec:ubd}
Using the analytical tools developed in~\cite{Hannah2016unbounded}, we can further prove that our algorithm allows unbounded delays. The detailed proofs are omitted due to the page limit; interested readers are referred to~\cite{Hannah2016unbounded}.

We make the following assumption.
\begin{assumption}\label{assum:eqpr}
\begin{equation}
q_i=\frac{1}{n}, ~\forall i=1,2,...,n.
\end{equation}
\end{assumption} 
Assumption~\ref{assum:eqpr} is made for simplicity of formulas. Only $q_i \ge \epsilon >0$ for $i$ is needed.

\textbf{Stochastic unbounded delays}\\
Instead of Assumption~\ref{ass-delay}, we make the following assumptions on the delay vectors $\tau^k,\delta^k$.
\begin{assumption}\label{assum:stdelay}
The delay vectors $\{\tau^k\}_{k\geq 0}$ are i.i.d. random variables; so are the delay vectors $\{\delta^k\}_{k\geq 0}$. In addition, they are independent of the agents $i_1,\ldots,i_k,\ldots$
\end{assumption}
\begin{assumption}[Evenly old delays]\label{assum:evenly}
We assume that the delays are \emph{evenly old}, meaning that there exists an integer $B>0$, such that for all $k\geq 0$, all $i,j=1,...,n$, all $e,f=1,...,m$,
\begin{align}
|\tau^k_i-\tau^k_j|\leq B,\\
|\tau^k_i-\delta^k_e|\leq B,\\
|\delta^k_e-\delta^k_f|\leq B.
\end{align}
\end{assumption}
In Assumption~\ref{assum:evenly}, only the existence of B is required. It can be as long as it needs to be.
\begin{definition}\label{def:qt}
We define 
\begin{align}
\Delta^k:&=\max\{\tau^k_1,\ldots,\tau^k_n,\delta^k_1,\ldots,\delta^k_m\};\label{deltak}\\
\mathbf{P}_l:&=P(\Delta^k\geq l), \quad l = 0, 1, \ldots
\end{align}
\end{definition}
We have the following theorem.
\begin{theorem}\label{thm:stdelay}
Let $( Z^k)_{k\geq0}\subset \cH$ be the sequence generated by Algorithm~\ref{alg:asyn}, 
Let Assumptions \ref{assum:eqpr}, \ref{ass-4}, \ref{assum:stdelay} and \ref{assum:evenly} hold. 
Then $( Z^k)_{k\geq 0}$ converges to a $\mathbf{Fix} \cT$-valued random variable a.s. if either of the following holds:
\begin{enumerate}
\item $\sum_{l=1}^\infty(l\mathbf{P}_l)^{1/2}<\infty$; the step sizes for each agent $i$ are equal and satisfy\\ $0<\eta_i<\left(\kappa+\frac{1}{\sqrt{n}}\sum_{l=1}^\infty \mathbf{P}_l^{1/2}(l^{1/2}+l^{-1/2})\right)^{-1}$;
\item $\sum_{l=1}^\infty(\mathbf{P}_l)^{1/2}<\infty$, the $\eta_i$'s are equal and satisfy $0<\eta_i<\left(\kappa+\frac{2}{\sqrt{n}}\sum_{l=1}^\infty \mathbf{P}_l^{1/2}\right)^{-1}$.
\end{enumerate}
\end{theorem}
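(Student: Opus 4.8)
The plan is to replay the bounded-delay analysis of Lemma~\ref{lemma:fund}, the fundamental inequality~\eqref{eqn:fund_inquality}, and Lemma~\ref{lemma:convergence}, with the deterministic horizon $\tau$ replaced by the random delay $\Delta^k$ of Definition~\ref{def:qt} and the finite product space $\cH^{\tau+1}$ replaced by an energy function carrying infinitely many backward differences with summable weights. I would start from the coordinate-update recursion~\eqref{i237}, $Z^{k+1}=Z^k-\eta_{i_k}\cS_{i_k}\widehat Z^k$; under Assumption~\ref{assum:eqpr} the relaxation parameters are equal, $\eta_i\equiv\eta$. Taking conditional expectation over $i_k$ (legitimate by Assumption~\ref{ass-4}) and using $\sum_i\cS_i\widehat Z^k=\cS\widehat Z^k$, the bound $\sum_i\|\cS_i\widehat Z^k\|_M^2\le\kappa\|\cS\widehat Z^k\|_M^2$ from~\eqref{eqn:bound_lemma1}, the firm-nonexpansiveness inequality~\eqref{eqn:alpha_avg} (valid since $\cT$ is averaged by Theorem~\ref{thm:average}), and $\widehat Z^k=Z^k+\sum_{d\in J(k)}(Z^d-Z^{d+1})$ with $J(k)\subseteq\{k-1,\dots,k-\Delta^k\}$, I obtain, for free positive parameters $\varepsilon_d$ coming from a term-by-term Young splitting of $\langle\cS\widehat Z^k,\sum_{d\in J(k)}(Z^d-Z^{d+1})\rangle_M$,
\[
\mathbb{E}\big(\|Z^{k+1}-Z^*\|_M^2\mid\cZ^k\big)\le\|Z^k-Z^*\|_M^2+\textstyle\sum_{d\in J(k)}\varepsilon_d\|Z^d-Z^{d+1}\|_M^2-c\,\|Z^k-\bar Z^{k+1}\|_M^2 .
\]

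The next step is the weight construction. Introduce a nonincreasing sequence $c_1\ge c_2\ge\cdots\ge0$ and the Lyapunov function $\zeta^k:=\|Z^k-Z^*\|_M^2+\sum_{l\ge1}c_l\|Z^{k-l}-Z^{k-l+1}\|_M^2$ (with $Z^j:=Z^0$ for $j<0$). Taking conditional expectation of $\zeta^{k+1}$ and using $P(d\in J(k))\le\mathbf{P}_{k-d}$ --- this is where Assumption~\ref{assum:stdelay} (delays independent of the activated agents) lets the tail probabilities enter the expectation, and Assumption~\ref{assum:evenly} is what lets the single scalar $\Delta^k$ control all coordinatewise delays --- I would choose $\varepsilon_d$ and $c_l$ so that after the index shift $k\mapsto k+1$ the backward-difference terms telescope into $\zeta^k$ with net nonpositive contribution, leaving $\mathbb{E}(\zeta^{k+1}\mid\cZ^k)\le\zeta^k-c'\|Z^k-\bar Z^{k+1}\|_M^2$. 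Carrying out this bookkeeping forces $c_l\sim\frac{1}{\sqrt n}\sum_{j\ge l}\mathbf{P}_j^{1/2}(j^{1/2}+j^{-1/2})$ in regime~(1) and the sharper $c_l\sim\frac{2}{\sqrt n}\sum_{j\ge l}\mathbf{P}_j^{1/2}$ in regime~(2); finiteness of $c_1$ is exactly the hypotheses $\sum_l(l\mathbf{P}_l)^{1/2}<\infty$ and $\sum_l\mathbf{P}_l^{1/2}<\infty$, and positivity of $c'$ is exactly the stated upper bounds on $\eta_i$.

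Once this supermartingale inequality is in hand the remainder is parallel to the bounded case. The Robbins--Siegmund theorem~\cite{robbins1985convergence} gives $\sum_k\|Z^k-\bar Z^{k+1}\|_M^2<\infty$ and a.s.\ convergence of $\zeta^k$; since $\|Z^k-Z^{k+1}\|_{\rm F}$ is controlled by $\|Z^k-\bar Z^{k+1}\|_M$ (as in step (i) of the proof of Lemma~\ref{lemma:convergence}) we get $Z^k-Z^{k+1}\to0$ a.s., and a Borel--Cantelli argument based on $\sum_l\mathbf{P}_l^{1/2}<\infty$ (so $\Delta^k/k\to0$ a.s.) upgrades this to $\widehat Z^k-Z^k\to0$ a.s. Then boundedness of $(\zeta^k)$ gives boundedness of $(Z^k)$, and the cluster-point argument of Lemma~\ref{lemma:convergence} --- $Z^{k_l}-\cT Z^{k_l}\to0$ along any convergent subsequence via $\|Z^{k_l}-\cT Z^{k_l}\|_M\le 2\|Z^{k_l}-\widehat Z^{k_l}\|_M+\|\widehat Z^{k_l}-\cT\widehat Z^{k_l}\|_M$, then the demiclosedness principle~\cite[Thm.~4.17]{bauschke2011convex} --- shows every cluster point lies in $\mathbf{Fix}\,\cT$; the stochastic Opial lemma~\cite{opial1967weak,combettes2015stochastic} finally yields a.s.\ convergence of $(Z^k)$ to a $\mathbf{Fix}\,\cT$-valued random variable.

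The main obstacle is the weight-selection/telescoping step: the coefficients $\varepsilon_d$ must be chosen so that simultaneously the coefficient of $\|Z^k-\bar Z^{k+1}\|_M^2$ stays strictly negative, the shifted weights still dominate the unshifted ones, and $c_1$ remains finite under the given tail condition; and one must see precisely why the evenly-old Assumption~\ref{assum:evenly} removes the extra $j^{1/2}$ factor, producing the weaker summability requirement in regime~(2). This is exactly the analysis of~\cite{Hannah2016unbounded}, which is why the full details are omitted and the reader is referred there.
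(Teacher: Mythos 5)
Your proposal takes essentially the same approach as the paper: the paper offers no proof of Theorem~\ref{thm:stdelay} at all, deferring entirely to~\cite{Hannah2016unbounded}, and your sketch reproduces precisely the route of that reference --- replace the finite product-space metric of Theorem~\ref{thm:fund_inquality} by a Lyapunov function weighting infinitely many backward differences $\|Z^{k-l}-Z^{k-l+1}\|_M^2$ by tail sums of $\mathbf{P}_l^{1/2}$, close the supermartingale inequality via Robbins--Siegmund, and finish with the same demiclosedness/Opial argument as in Lemma~\ref{lemma:convergence}. Like the paper, you leave the decisive weight-selection and telescoping computation (which yields the two step-size bounds and separates the two summability regimes) to the cited reference, so this is a faithful and correctly structured proof plan rather than a self-contained proof, matching the paper's own level of detail.
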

In both scenarios of Theorem~\ref{thm:stdelay}, $\mathbf{P}_l$ are required to decay fast when $l$ grows. Bigger step sizes can be taken when the delay distribution has thinner tails. In particular, bounded delays and delays following a geometric distribution are special cases of both scenarios.

\textbf{Unbounded deterministic delays}

In the following, we allow the delays to be arbitrary.
\begin{assumption}\label{assum:dtdelay}
The delay vectors $\tau^k,\delta^k$ are arbitrary, with $\liminf\Delta^k<\infty$ where $\Delta^k$ is defined in~\eqref{deltak}.
\end{assumption}
Assumption~\ref{assum:dtdelay} means that there exists an bound $\cB$, as large as it needs to be, such that for infinite many iterations, the delays are no larger than $\cB$.
\begin{definition}
Let $( Z^k)_{k\geq0}\subset \cH$ be the sequence generated by Algorithm~\ref{alg:asyn}. For all positive integers $T$, define $Q_T$ be a subsequence of $( Z^k)_{k\geq0}$ obtained by removing the iterates with $\Delta^k>T$.
\end{definition}
We have the following convergence result.
\begin{theorem}\label{thm:dtdelay}
Let $( Z^k)_{k\geq0}\subset \cH$ be the sequence generated by Algorithm~\ref{alg:asyn}. 
Let Assumptions \ref{assum:eqpr}, \ref{ass-4} and \ref{assum:dtdelay} hold. Fix $\gamma>0$, and let the step sizes $\eta_i$ vary by each iteration and satisfy
\begin{equation}
0<\eta_i^k<\left(\kappa+\frac{1}{\sqrt{n}}(1+\frac{1}{\gamma})+\frac{1}{2+\gamma}(\Delta^k+1)^{2+\gamma}) \right)^{-1}.
\end{equation}
Then for all $T\geq\liminf \Delta^k$, the sequence $Q_T$ converges to the same $\mathbf{Fix} \cT$-valued random variable a.s.
\end{theorem}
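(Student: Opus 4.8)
The plan is to follow the bounded-delay argument of Sections~\ref{sec:covsyn}--\ref{sec:ubd} almost verbatim, but with two changes dictated by the unboundedness of the delays, exactly along the lines of the framework in~\cite{Hannah2016unbounded}: the fixed stacked metric $\|\cdot\|_U$ is replaced by an \emph{iteration-adapted} Lyapunov energy with growing weights, and the step size is allowed to shrink with $\Delta^k$. By Theorem~\ref{thm:average} the operator $\cT$ is still averaged under Assumption~\ref{assumfunc}, so $\cS=I-\cT$ still obeys the cocoercivity bound~\eqref{eqn:alpha_avg}. First I would fix a realization and introduce, for each $k$, an energy of the form $\Phi^k:=\|Z^k-Z^*\|_M^2+\sum_{d<k}c_d^k\,\|Z^d-Z^{d+1}\|_M^2$, where the nonnegative weights $c_d^k$ are chosen so that (i) for each $k$ they dominate the coefficient attached to the delay term $\sum_{d\in J(k)}\|Z^d-Z^{d+1}\|_M^2$ produced by the cross term in the proof of Lemma~\ref{lemma:fund}, and (ii) they telescope from step $k$ to step $k+1$. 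Since now $J(k)$ may reach back $\Delta^k$ steps and $\Delta^k$ is unbounded, these weights must be allowed to grow; the polynomial term $\tfrac{1}{2+\gamma}(\Delta^k+1)^{2+\gamma}$ in the step-size window is precisely what is needed to split the cross term $\langle\cS\widehat{Z}^k,\sum_{d\in J(k)}(Z^d-Z^{d+1})\rangle_M$ by Young's inequality against these growing weights while preserving the telescoping.

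With such weights in hand, the second step is to prove the analog of the fundamental inequality of Theorem~\ref{thm:fund_inquality}: under Assumptions~\ref{assum:eqpr} and~\ref{ass-4} the conditional expectation over $i_k$ is taken exactly as in step~(a) of the proof of Lemma~\ref{lemma:fund}, and the stated bound on $\eta_i^k$ forces the ``diagonal'' coefficient multiplying $\|\cS\widehat{Z}^k\|_M^2$ to be strictly positive, yielding $\mathbb{E}(\Phi^{k+1}\mid\cZ^k)\le\Phi^k-\rho^k\,\|\cS\widehat{Z}^k\|_M^2$ for some $\rho^k>0$ whose size is bounded away from zero whenever $\Delta^k$ is bounded. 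Applying the Robbins--Siegmund theorem~\cite{robbins1985convergence} then gives, almost surely, that $\Phi^k$ converges to a finite limit and that $\sum_k\rho^k\,\|\cS\widehat{Z}^k\|_M^2<\infty$. A density argument over a countable dense subset of $\vS^*$, as in Lemma~\ref{lemma:convergence}(iii) and~\cite[Prop.~2.3]{combettes2015stochastic}, upgrades the first conclusion to: almost surely, $\Phi^k$ evaluated at $Z^*$ converges simultaneously for every fixed point $Z^*$, i.e.\ the adapted stacked distance to $\vS^*$ converges for all cluster-point candidates at once.

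Third, Assumption~\ref{assum:dtdelay} enters through the subsequence $Q_T$. Fix $T\ge\liminf\Delta^k$; along $Q_T$ the step sizes $\eta_i^k$ are uniformly bounded below, so $\rho^k\ge\rho_T>0$ there, hence $\sum_{k\in Q_T}\|\cS\widehat{Z}^k\|_M^2<\infty$ and thus $\cS\widehat{Z}^k\to 0$ along $Q_T$. Following~\cite{Hannah2016unbounded}, this together with $\Delta^k\le T$ on $Q_T$ (so only boundedly many successive differences enter $\widehat{Z}^k$ via Lemma~\ref{lemma-hat-z-relation}) yields $\widehat{Z}^k-Z^k\to 0$ and $Z^k-Z^{k+1}\to 0$ along $Q_T$, and then $Z^k-\cT Z^k\to 0$ along $Q_T$ by the triangle-inequality/nonexpansiveness estimate of Lemma~\ref{lemma:convergence}(iv). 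The demiclosedness principle~\cite[Thm.~4.17]{bauschke2011convex} then places every cluster point of $(Z^k)_{k\in Q_T}$ in $\mathbf{Fix}\cT$, and combined with the simultaneous convergence of the adapted distance, the stochastic Opial lemma~\cite{opial1967weak} forces $Q_T$ to converge to a single $\mathbf{Fix}\cT$-valued random variable. Finally, for $T\le T'$ we have $Q_T\subseteq Q_{T'}$ and both converge, so the limit of $Q_T$ is a cluster point of $Q_{T'}$, hence equals the limit of $Q_{T'}$ --- this is the ``same limit for all $T$'' assertion.

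The main obstacle lies entirely in the first two steps: designing the iteration-dependent weights $c_d^k$ and verifying the supermartingale inequality inside the precise window $0<\eta_i^k<\big(\kappa+\tfrac{1}{\sqrt{n}}(1+\tfrac{1}{\gamma})+\tfrac{1}{2+\gamma}(\Delta^k+1)^{2+\gamma}\big)^{-1}$. The delicate point, imported from~\cite{Hannah2016unbounded}, is that the Young's-inequality coefficients used to absorb the far-reaching cross terms must be tuned per iteration (scaling with $\Delta^k$), and one must check that the weights they induce still telescope so that $\Phi^k$ remains a supermartingale; the power $2+\gamma$ rather than something milder is dictated by exactly this balancing, and $\gamma>0$ is the slack that makes the relevant series summable. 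Everything downstream --- Robbins--Siegmund, the density/Fej\'er argument, demiclosedness, and Opial --- is a faithful repetition of the bounded-delay proof restricted to the subsequence $Q_T$.
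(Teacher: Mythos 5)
Your plan follows exactly the route the paper itself takes: the paper omits the proof of Theorem~\ref{thm:dtdelay} entirely and defers to the Lyapunov-function analysis of~\cite{Hannah2016unbounded}, which is precisely the iteration-adapted energy $\Phi^k$ with growing weights, Robbins--Siegmund, and the restriction to the subsequence $Q_T$ that you describe. So there is no divergence of approach to report. The issue is that your proposal is a roadmap rather than a proof: you yourself flag that ``the main obstacle lies entirely in the first two steps,'' and those steps are exactly where all the content of the theorem lives. You never construct the weights $c_d^k$, never verify that they telescope from $\Phi^k$ to $\Phi^{k+1}$ when $J(k)$ reaches back $\Delta^k$ steps with $\Delta^k$ unbounded, and never check that the specific window $0<\eta_i^k<\bigl(\kappa+\tfrac{1}{\sqrt{n}}(1+\tfrac{1}{\gamma})+\tfrac{1}{2+\gamma}(\Delta^k+1)^{2+\gamma}\bigr)^{-1}$ makes the diagonal coefficient positive. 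Since the exponent $2+\gamma$ and the $1/\gamma$ term are the entire point of the statement, asserting that they are ``precisely what is needed'' without carrying out the Young's-inequality bookkeeping leaves the theorem unproved; a referee could not reconstruct the argument from what you wrote any more than from the paper's citation.

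One further concrete soft spot in your third step: from $\sum_{k}\rho^k\|\cS\widehat{Z}^k\|_M^2<\infty$ you conclude $\cS\widehat{Z}^k\to 0$ along $Q_T$ because ``$\rho^k\ge\rho_T>0$ there.'' But the coefficient multiplying $\|\cS\widehat{Z}^k\|_M^2$ in the descent inequality scales like $\eta_i^k\bigl(1-\eta_i^k C(\Delta^k)\bigr)$, and the theorem only imposes an \emph{upper} bound on $\eta_i^k$; if the chosen step sizes tend to zero along $Q_T$, $\rho^k$ is not bounded below and the conclusion fails. You need to either fix $\eta_i^k$ as a definite fraction of the stated bound (as is done in~\cite{Hannah2016unbounded}) or add a lower bound on the step sizes along $Q_T$; as written, the inference is not valid. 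The downstream machinery (density argument over a countable subset of $\vS^*$ as in~\cite{combettes2015stochastic}, demiclosedness~\cite[Theorem~4.17]{bauschke2011convex}, Opial~\cite{opial1967weak}, and the nesting $Q_T\subseteq Q_{T'}$ to get a common limit) is correctly assembled.
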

Theorem~\ref{thm:dtdelay} requires the step sizes to change according to the current delay and the convergence is applied to the subsequence with some iterates excluded as in Definition~\ref{def:qt}.
}
\section{Numerical Experiments}
In this part we simulate the performance of the proposed asynchronous algorithm \eqref{update_asyn}. We will compare it with its synchronous counterpart \eqref{syn-decentralized} (synchronous PG-EXTRA) as well as the proximal gradient descent algorithms (synchronous algorithm \eqref{dpg} and asynchronous algorithm \eqref{dpg-asyn}).



In the following experimental settings, we generate the network as follows. The location of the $n$ agents are randomly generated in a $30 \times 30$ area. If any two agents are within a distance of $15$ units, they are regarded as neighbors and one edge is assumed to connect them. Once the network topology is generated, the associated weighting matrix $W$ is produced according to the Metropolis-Hastings rule \cite{sayed2014adaptation}.

{In all simulations, we let each agent start a new round of update, with available neighboring variables which involves delays in general, instantly after the finish of last one. To guarantee each agent to be activated i.i.d,} the computation time of agent $i$ is sampled from an exponential distribution exp$(1/\mu_i)$. 
For agent $i$, $\mu_i$ is set as $2+|\bar{\mu}|$ where $\bar{\mu}$ follows the standard normal distribution  $N(0,1)$. The communication times is also simulated. The communication time between agents are independently  sampled from exp$(1/0.6)$. After the computation time is generated, the probability $q_i$ can be computed accordingly.

In all curves in the following figures, the relative error $\frac{\|X^{k}-X^*\|_F}{\|X^{0}-X^*\|_F}$ against time is plotted, where $X^*$ is the exact solution to \eqref{prob-general}.

%

%

\vspace{-4mm}
\subsection{Decentralized compressed sensing}\label{sec:dcs}
For \emph{decentralized compressed sensing}, each agent $i\in\{1,2,\cdots,n\}$ holds some measurements: $b_i=A_i x+e_i\in\RR^{m_i}$, where $A_i\in \RR^{m_i\times p}$ is a sensing matrix, $x\in\RR^p$ is the common unknown \emph{sparse} signal, and $e_i$ is i.i.d. Gaussian noise. The goal is to recover $x$. The number of  measurements $\sum_{i=1}^n m_i$ may be less than the number of unknowns $p$, so  we solve the  $\ell_1$-regularized least squares:
\begin{align}
\Min_x~\textstyle{\frac{1}{n}\sum_{i=1}^n} s_i(x)+r_i(x),\label{prob:cs}
\end{align}
where $s_i(x)=\frac{1}{2}\|A_i x-b_i\|_2^2,\quad r_i(x)=\theta_i\|x\|_1,$
and $\theta_i$ is the regularization parameter with agent $i$.

The tested network has 10 nodes and 14 edges. We set $m_i=3$ for $i=1,\cdots,10$ and $p=50$. The entries of $A_i,e_i$ are  independently sampled from the standard normal distribution $N(0,1)$, and $A_i$ is normalized so that $\|A_i\|_2=1$. The signal $x$ is  generated randomly with $20\%$ nonzero elements. We set the regularization parameters $\theta_i=0.01$.

The step sizes of all the four algorithms are tuned by hand and are nearly optimal. The step size $\alpha$ for both the primal synchronous algorithm \eqref{dpg} and the asynchronous algorithm \eqref{dpg-asyn} are set to be $0.05$. {This choice is a compromise between convergence speed and accuracy.} The relaxation parameters for the asynchronous algorithm \eqref{dpg-asyn}, are chosen to be $\eta_i={0.036}/{q_i}$. The step size $\alpha$ for both synchronous PG-EXTRA Algorithm \ref{alg:syn} and asynchronous Algorithm~\ref{alg:asyn} are set to be $1$. The relaxation parameters for asynchronous Algorithm~\ref{alg:asyn} are chosen to be $\eta_i={0.0288}/{q_i}$.
From Fig.~\ref{fig:CS} we can see that asynchronous primal algorithm \eqref{dpg-asyn} is faster than its synchronous version \eqref{dpg}, but both algorithms are far slower than synchronous PG-EXTRA Algorithm \ref{alg:syn} and asynchronous Algorithm~\ref{alg:asyn}. The latter two algorithms exhibit linear convergence and Algorithm~\ref{alg:asyn}  converges significantly faster. {Within the same period (roughtly 2760ms), the two asynchronous algorithm finishes 21 times as many rounds of computation and communication as the synchronous counterparts, due to the elimination of waiting time.
	
To better illustrate the reason why the asynchronous Algorithm \ref{alg:asyn} is much faster than the synchronous Algorithm \ref{alg:syn}, we list the computation and communication time during the first iteration in Tables \ref{table-computation} and \ref{table-communication}, respectively. In Table \ref{table-communication} each edge corresponds to two rounds of communication time: communications from agent $i$ to $j$ and from $j$ to $i$. From Table \ref{table-computation} it is observed that the longest computation time is $1.152$ ms. From Table \ref{table-communication} it is observed that the longest communication time is $4.592$ ms. Therefore, the duration for the first iteration in the synchronous Algorithm \ref{alg:syn} is $1.152 + 4.592 = 5.744$ ms. However, in the asynchornous Algorithm \ref{alg:asyn}, each agent will start a new iteration right after the finish of its previous update. The average duration for the first iteration is $\sum_{i=1}^{10}t_i/10 = 0.443$ ms. Therefore, during the first iteration the asynchronous Algorithm \ref{alg:asyn} is $5.744/0.443\simeq 13$ times faster than the synchronous Algorithm \ref{alg:syn}. The data listed in Tables \ref{table-computation} and \ref{table-communication} illustrates the necessity to remove the idle time appearing in the synchronous algorithm.

\begin{table}
	\setlength{\belowcaptionskip}{-4mm}
	\begin{center}
		\begin{tabular}{|c|c|c|c|c|c| } 
			\hline
			agent & 1 & 2 & 3 &4 & 5 \\
			\hline
			time (ms) & 0.497 & 0.033 & 0.944 & 0.551 & 1.152 \\
			\hline \hline
			agent & 6 & 7 & 8 &9 & 10 \\
			\hline
			time (ms) & 0.072 &  0.112 &  0.996 &  0.049  & 0.025 \\
			\hline
		\end{tabular}
		\caption{\small Sampled computation time in the $1$st iteration.}
		\label{table-computation}
	\end{center}
\end{table}



\begin{table}
		\setlength{\belowcaptionskip}{-4mm}
	\begin{center}
		\begin{tabular}{|c|c|c|c|c|c|c|c|c} 
			\hline
			edge & (1,2) & (1,10) & (1,8) &(2,3) & (2,5) & (2,8) & (2,9) \\
			\hline
			time(ms) & 0.489 & 1.425 & 0.024 & 1.191 & 2.862 & 2.140 & 0.091 \\
			\hline \hline
			edge & (3,6) & (4,6) & (4,7) & (4,9)  & (6,7) & (7,8) & (8,10)\\
			\hline 
			time(ms) & 1.429 & 0.018 & 2.359 & 2.233 & 0.003 & 1.952 & 2.412 \\
			\hline \hline
			edge & (2,1) & (10,1) & (8,1) &(3,2) & (5,2) & (8,2) & (9,2)\\
			\hline
			time (ms)  & 2.762 & 1.165 & 1.672 & 1.828 & 0.569 & 4.592 & 0.617 \\
			\hline \hline
			edge  & (6,3) & (6,4) & (7,4) & (9,4) & (7,6) & (8,7) & (10,8) \\
			\hline 
			time (ms)  & 0.385 &0.887 & 1.152 & 0.744 & 2.716  &  0.649&  3.031 \\
			\hline
		\end{tabular}
		\caption{\small Sampled communication time in the $1$st iteration. \vspace{-0mm}}
		\label{table-communication}
	\end{center}
\end{table}

}

Since both synchronous and asynchronous proximal-gradient-descent-type algorithms are much slower compared to the synchronous PG-EXTRA Algorithm~\ref{alg:syn} and asynchronous Algorithm~\ref{alg:asyn}, in the following two simulations we just show convergence performance of PG-EXTRA Algorithm~\ref{alg:syn}  and asynchronous Algorithm~\ref{alg:asyn}.

\begin{figure}
	\setlength{\belowcaptionskip}{-3mm}
\centering
\includegraphics[scale=0.4]{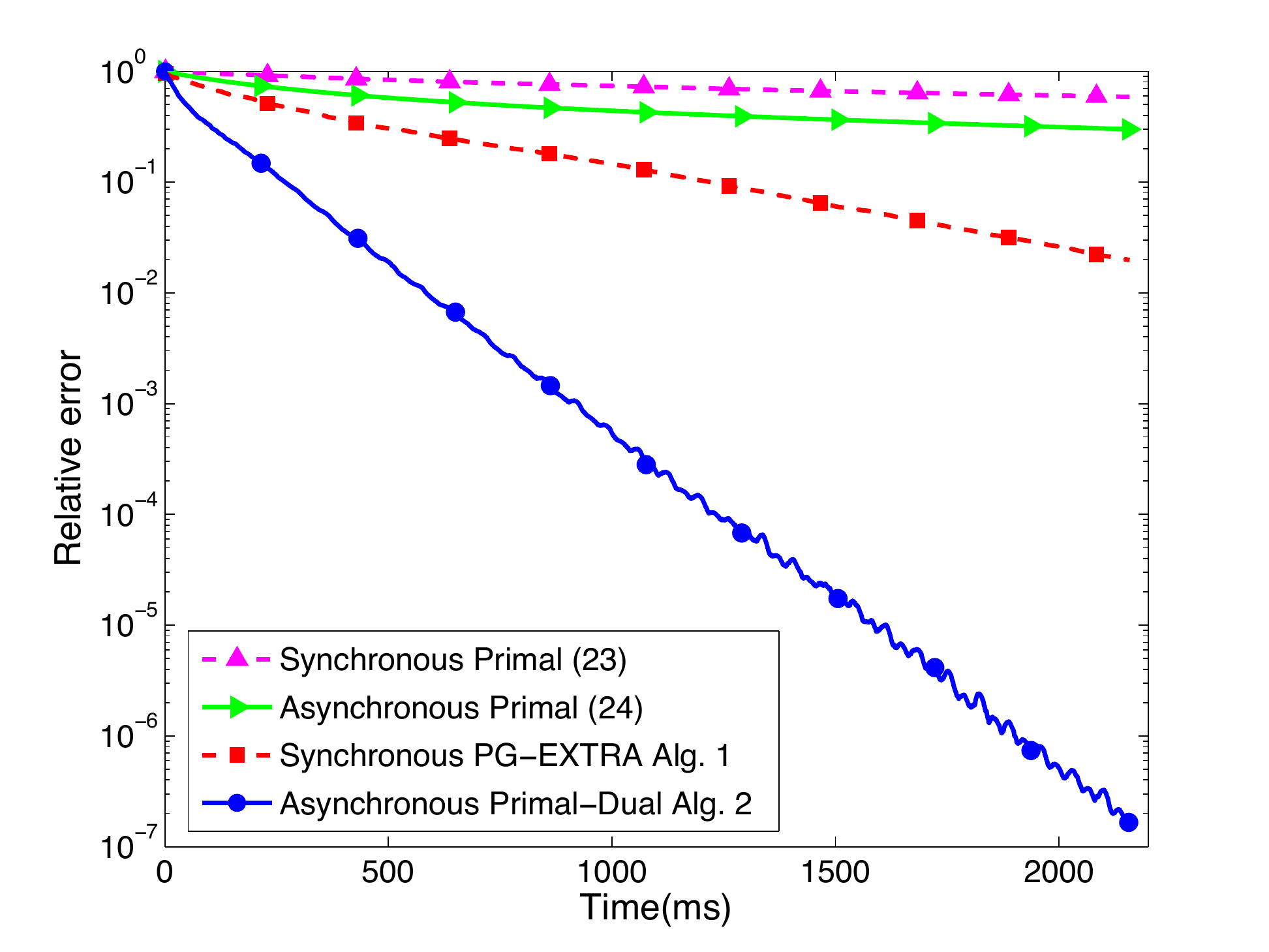}
\caption{\small Convergence comparison between synchronous algorithm \protect\eqref{dpg}, asynchronous algorithm \protect\eqref{dpg-asyn}, synchronous PG-EXTRA Algorithm \protect\ref{alg:syn} and the asynchronous Algorithm \protect\ref{alg:asyn} for compressed sensing.}
\label{fig:CS}
\end{figure}
\vspace{-3mm}
\subsection{Decentralized sparse logistic regression}
In this subsection the tested problem is {\em decentralized sparse logistic regression}. Each agent $i\in\{1,2,\cdots,n\}$ holds local data samples $\{h^i_j, d^i_j\}_{j=1}^{m_i}$, where the supscript $i$ indiates the agent index and subscript $j$ indicates the data index. $h_j^i\in \RR^p$ is a feature vector and $d_j^i \in \{+1, -1\}$ is the corresponding label. $m_i$ is the number of local samples kept by agent $i$. All agents in the network will cooperatively solve the sparse logistic regression problem
\eq{\label{sparse-lr}
\textstyle{\Min_{x\in \RR^p}\quad  \frac{1}{n} \sum_{i=1}^{n} [s_i(x) + r_i(x)],}
}
where $\hspace{-0.5mm}s_i(x) \hspace{-1mm}=\hspace{-1mm} \frac{1}{m_i}\hspace{-1mm} \sum_{j=1}^{m_{i}} \hspace{-1mm}\ln \hspace{-0.4mm}\left( \hspace{-0.3mm}1 \hspace{-0.8mm}+\hspace{-0.8mm} \exp(-d_j^i (h_j^i)^\top\hspace{-0.8mm} x  \hspace{-0.4mm}) \right)$, $r_i(x) \hspace{-1mm} = \hspace{-1mm}\theta_{i} \|x\|_1$.

In the simulation, we set $n=10$, $p=50$, and $m_{i}=3$ for all $i$. For local data samples $\{h_j^i, d_j^i\}_{j=1}^{m_{i}}$ at agent $i$, each $h_j^i$ is generated from the standard normal distribution $N(0,1)$. To generate $d_{j}^i$, we first generate a random vector $x^o\in \RR^{p}$ with $80\%$ entries being zeros. Next, we generate $d_j^i$ from a uniform distribution $U(0,1)$. If $d_j^i \le 1/[1+\exp(-(h_j^i)^\top x^o)]$ then $d_j^i$ is set as $+1$; otherwise $d_j^i$ is set as $-1$. We set the regularization parameters $\theta_i=0.1$.

%

The step sizes of both algorithms are tuned by hand and are nearly optimal. The step sizes $\alpha$ for both synchronous PG-EXTRA Algorithm \ref{alg:syn} and asynchronous Algorithm~\ref{alg:asyn} are set to be $0.4$. The relaxation parameters for asynchronous Algorithm~\ref{alg:asyn} are chosen to be $\eta_i=\frac{0.0224}{q_i}$.
From Fig.~\ref{fig:logistic} we can see that both algorithms exhibit convergence almost linearly and that Algorithm~\ref{alg:asyn}  converges significantly faster. In our experiments, within the same period, the asynchronous algorithm finishes 21 times as many rounds of computation and communication as that performed by the synchronous algorithm, due to the elimination of waiting time.

\begin{figure}
	\setlength{\belowcaptionskip}{-5mm}
	\centering
\includegraphics[scale=0.4]{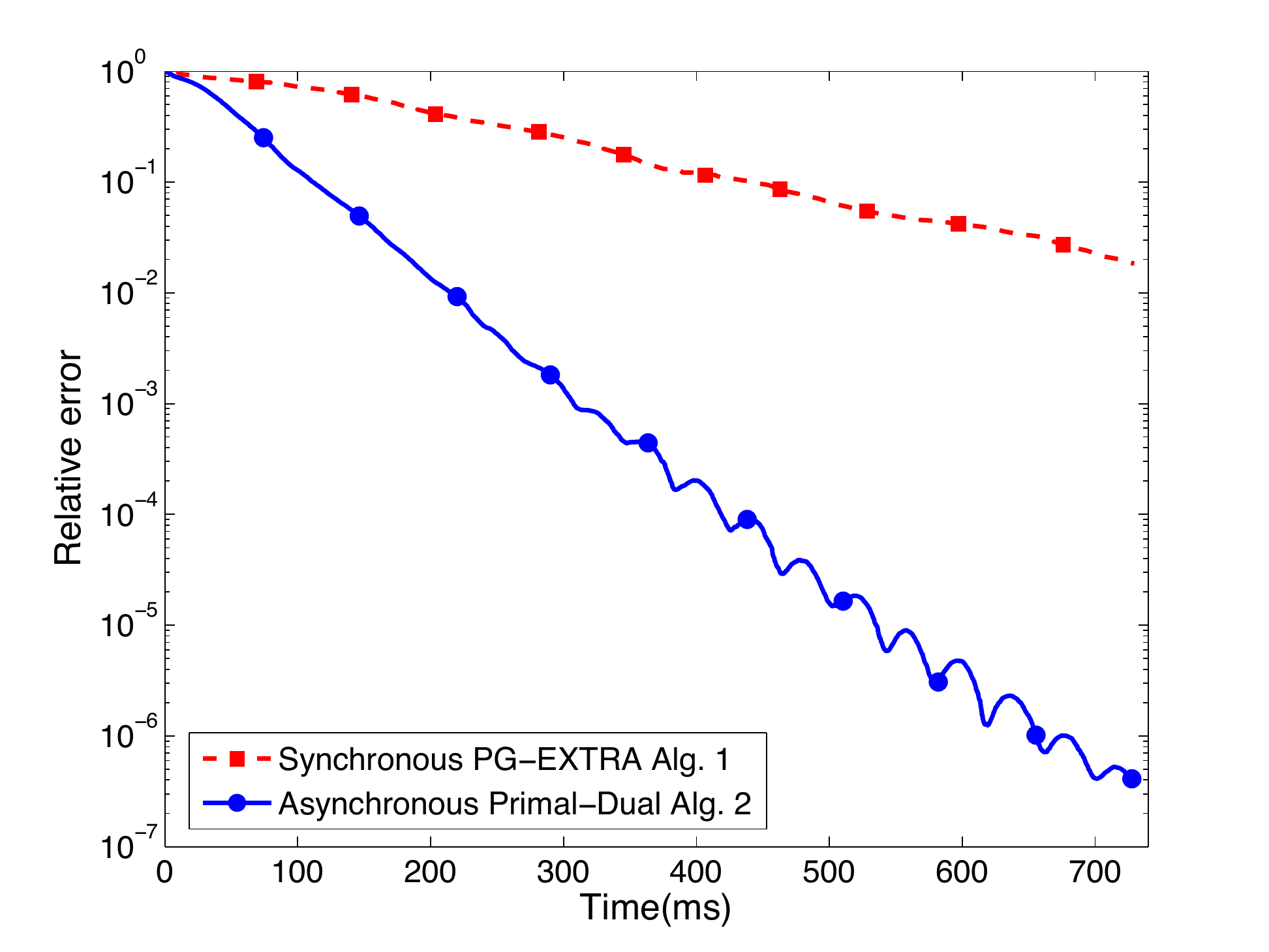}
\caption{\small Convergence comparison between synchronous PG-EXTRA and the asynchronous Algorithm \protect\ref{alg:asyn} for sparse logistic regression.}
\label{fig:logistic}
\end{figure}

\subsection{Decentralized low-rank matrix completion}
Consider a low-rank matrix
$A = [A_1, \cdots, A_n] \in \RR^{N \times K}$
of rank $r \ll \min\{N,K\}$. In a network, each agent $i$ observes some entries of $A_i \in \RR^{N\times K_i}$, $\sum_{i=1}^{n}K_i = K$. The set of observations is $\Omega = \cup_{i=1}^n \Omega_i$. To recover the unknown entries of $A$, we introduce a {\em public matrix} $X \in \RR^{N\times r}$, which is known to all agents, and a {\em private matrix}
$Y = [Y^1, \cdots, Y^n] \in \RR^{r \times K},$ 
where each $Y^i\in \RR^{N\times K_i}$ corresponds to $A_i$ and is held by agent $i$. The supscript $i$ indicates the agent index. We reconstruct $A = XY$, which is at most rank $r$, by recovering $X$ and $Y$ in a decentralized fashion. The problem is formulated as follows (see \cite{ling2012decentralized} for reference).
\eq{
	\label{matrix fac}
	\Min_{X, \{Y_i\}_{i=1}^n, \{Z_i\}_{i=1}^n} \quad& \textstyle{\frac{1}{2}\sum_{i=1}^{n}} \|XY^i - Z^i\|_F^2, \nnb
		\St \quad & (Z^i)_{ab} =  (A_i)_{ab}, \forall (a,b)\in \Omega_i,
}
where $Z^i\in \RR^{N\times K_i}$ is an auxiliary matrix, and $(Z^i)_{ab}$ is the $(a,b)$-th element of $A_i$.
\subsubsection{Synchronous algorithm}
\cite{ling2012decentralized} proposes a decentralized algorithm to solve Problem \eqref{matrix fac}. Let each agent hold $X^i$ as a local copy of the public matrix $X$, the algorithm is:

\vspace{2mm}
\noindent \textit{Step 1: Initialization.} Agent $i$ initializes $X^{i,0}$ and $Y^{i,0}$ as random matrices, $Z^{i,0}$ is also initialized as a random matrix with $(Z^{i,0})_{ab}=(A_i)_{ab}$ for any $(a,b)\in \Omega_i$.

\noindent \textit{Step 2: Update of $X^i$.} Each agent $i$ updates $X^{i,k+1}$ by solving the following average consensus problem:
\eq{\label{ave-cns}
\Min_{\{X^i\}_{i=1}^n}\quad& \textstyle{\frac{1}{2}\sum_{i=1}^{n} \|X^i - Z^{i,k} (Y^{i,k})^\top\|_F^2,} \nnb
\St \quad& \textstyle{X^1 = X^2 = \cdots = X^n}.
}

\noindent \textit{Step 3: Update of $Y^i$.} Each agent $i$ updates
$$ Y^{i,k+1} = \big[ (X^{i,k+1})^\top X^{i,k+1} \big]^{-1} (X^{i,k+1})^\top Z^{i,k}.$$

\noindent \textit{Step 4: Update of $Z_i$.} Each agent $i$ updates
\eq{
Z^{i,k+1} =&  X^{i,k+1} Y^{i,k+1} + P_{\Omega_i} \big(A_i - X^{i,k+1} Y^{i,k+1} \big),
}
where $P_{\Omega}(\cdot)$ is defined as follows: {\color{black}for any matrix $A$, if $(a,b)\in \Omega$, then $[P_{\Omega}(A)]_{ab} = A_{ab}$; otherwise $[P_{\Omega}(A)]_{ab} = 0$.}


In problem \eqref{ave-cns} appearing at Step 2, we can let $r_i(X^i)=\frac{1}{2}\|X^i - Z^{i,k} (Y^{i,k})^\top\|_F^2$ and $s_i(X_i)=0$ and hence problem \eqref{ave-cns} falls into the general form of problem \eqref{prob-general}, for which we can apply the synchronous PG-EXTRA Algorithm \ref{alg:syn} to solve it. We introduce matrices $\{Q^e,\forall e\in\cE\}$ as dual variables. Instead of solving \eqref{ave-cns} exactly, we just run \eqref{syn-decentralized} once for each iteration $k$. Therefore, {\em Step 2} becomes

\noindent \textit{Step 2$^\prime$: Update of $X^i$.} Each agent $i$ updates $X^{i,k+1}$ by
\eq{\label{update step 2'}
		X^{i,k+1}\hspace{-0.8mm} &=\hspace{-0.5mm}\frac{1}{\alpha\hspace{-0.8mm}+\hspace{-0.8mm}1} \hspace{-1mm} \Big( \hspace{-1mm}\sum_{j\in \mathcal{N}_i} \hspace{-1.5mm} w_{ij}X^{j,k}\hspace{-0.8mm}-\hspace{-1.5mm} \sum_{e\in \mathcal{E}_i} \hspace{-1mm}v_{ei} Q^{e,k} \hspace{-0.8mm}+\hspace{-0.8mm} \alpha  Z^{i,k} (Y^{i,k})^\top \hspace{-1mm}\Big), \\
		Q^{e,k+1}&= Q^{e,k}\hspace{-0.8mm}+\hspace{-0.8mm} \big(v_{ei} X^{i,k} \hspace{-0.8mm}+\hspace{-0.8mm} v_{ej} X^{j,k} \big),~\forall e=(i,j)\in\cL_i.
}
\subsubsection{Asynchronous algorithm}
For the asynchronous algorithm, agent $i$, when activated, updates $X_i$ and $\{Q^e,\forall e=(i,j)\in\cL_i\}$ per {\em Step 2'} using the neighboring information it has available, then performs {\em Step 3} and {\em Step 4} to update its private matrices $Y_i$ and $Z_i$, and sends out the updated $X_i$ and $Q^e$ to neighbors.

We test our algorithms on a network with $20$ nodes and $41$ edges. In the simulation, we generate a rank $4$ matrix $A\in \RR^{40 \times 140}$, hence each $A_i \in \RR^{40 \times 7}$. To generate $A$, we first produce $E\in \RR^{40 \times 4} \sim N(0,1)$ and $F\in \RR^{140 \times 4} \sim N(0,1)$. We also produce a diagonal matrix $D\in \RR^{4\times 4} \sim N(0,1)$. With $E,F$ and $D$, we let $A = E D F^\top$. A total of $80\%$ entries of $A$ are known. To generate $\Omega$, we first sample its entries independently from the uniform distribution $U(0,1)$. If any entry of $\Omega$ is less than $0.8$, it will be set as $1$, which indicates this entry is known; otherwise it will be set as $0$.
%

We run the synchronous PG-EXTRA Algorithm~\ref{alg:syn} and the proposed asynchronous primal-dual  Algorithm~\ref{alg:asyn} and plot the relative error $\frac{\|Z^{k}-A\|_F}{\|Z^0-A\|_F}$ against time, as depicted in Fig.~\ref{fig:matrix_comp}. The step sizes of both algorithms are chosen to be $\alpha=0.1$. The relaxation parameters for asynchronous Algorithm~\ref{alg:asyn} are chosen to be $\eta_i=\frac{0.0102}{q_i}$. The performance of the algorithms are similar to the previous experiments.
In this $20$-node network, within the same period, the asynchronous algorithm finishes $29$ times as many rounds of computation and communication as those finished by synchronous Algorithm \ref{alg:syn}.
\begin{figure}
	\setlength{\belowcaptionskip}{-5mm}
	\centering
\includegraphics[scale=0.4]{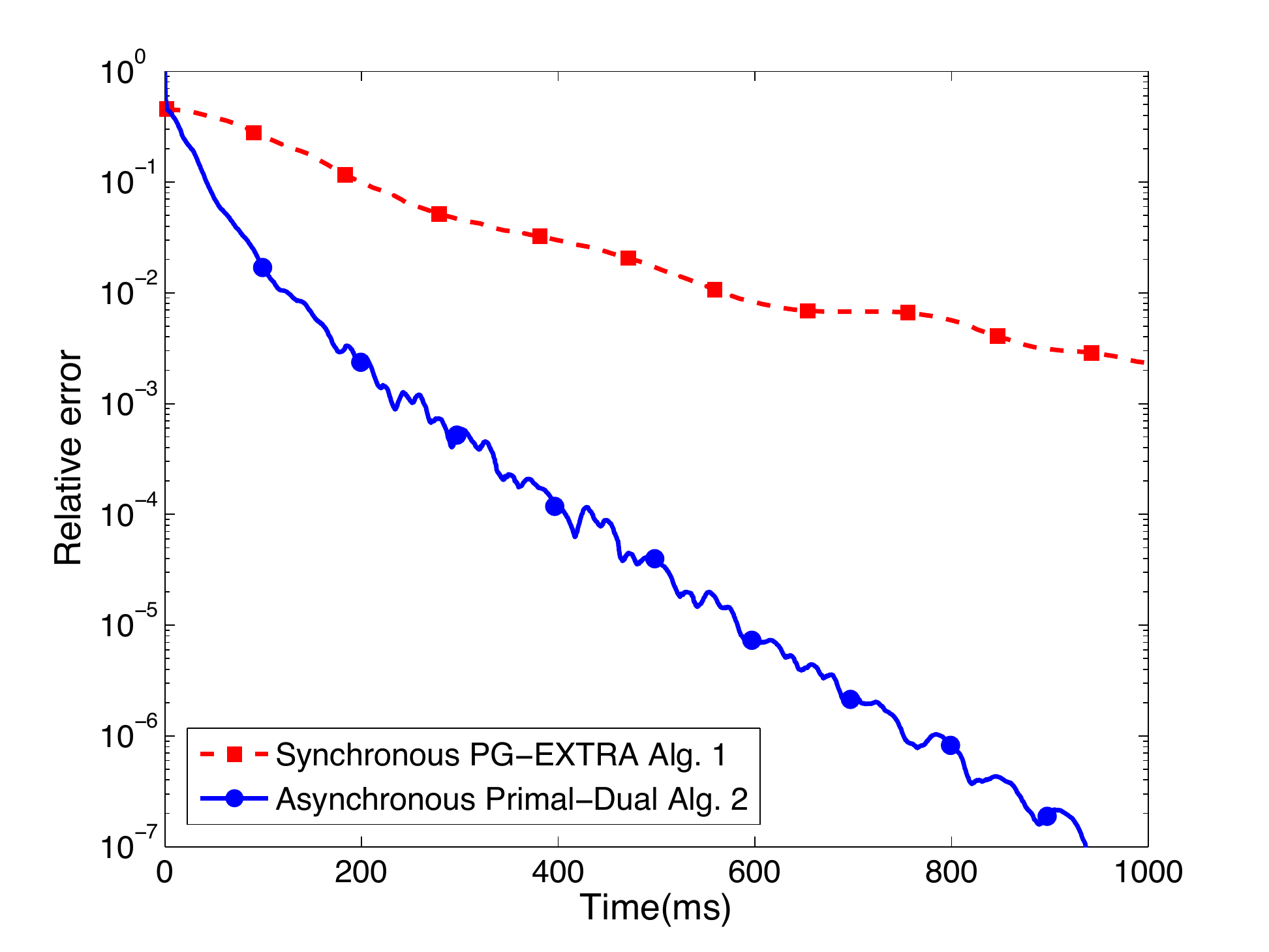}
\caption{\small Convergence comparison between synchronous PG-EXTRA and the asynchronous Algorithm \protect\ref{alg:asyn} for matrix completion.}
\label{fig:matrix_comp}
\end{figure}

{
\subsection{Decentralized geometric median}
In the literature there are limited asynchronous algorithms proposed to solve problems with composite cost function as in Problem \eqref{prob-general}. When there is no differentiable term, i.e.,  $s_i(x)=0$, the most closely related algorithm to the proposed asynchronous primal-dual Alg.~\ref{alg:asyn} is the asynchronous ADMM \cite{Peng_2015_AROCK}. In this subsection, we compare these two algorithms when solving the geometric-median problem:
\eq{
\min_{x\in \RR^p}\quad \frac{1}{n}\sum_{i=1}^{n}\|x - b_i \|_2,
}
where $\{b_i\}_{i=1}^n$ are given constants. Computing decentralized geometric medians have various interesting applications, see \cite{shi2015proximal} for more detail. In this simulation, we set $n=11$, $p=4$, and each $b_i$ is generated from the Gaussian distribution $N(0, \Lambda)$, where $\Lambda$ is a diagonal matrix with each entry follows uniform distribution $U(0,10)$. The parameters of both algorithms are hand-optimized. The augmented coefficient in the asynchronous ADMM is 0.3, and the step-size in Alg. \ref{alg:asyn} is $1$. The relaxation parameter for both algorithms is set as $0.4$. It is observed in Fig. \ref{fig:mp_10nodes} that Alg. \ref{alg:asyn} is faster than the asynchronous ADMM.
\begin{figure}
	\setlength{\belowcaptionskip}{-5mm}
	\centering
	\includegraphics[scale=0.4]{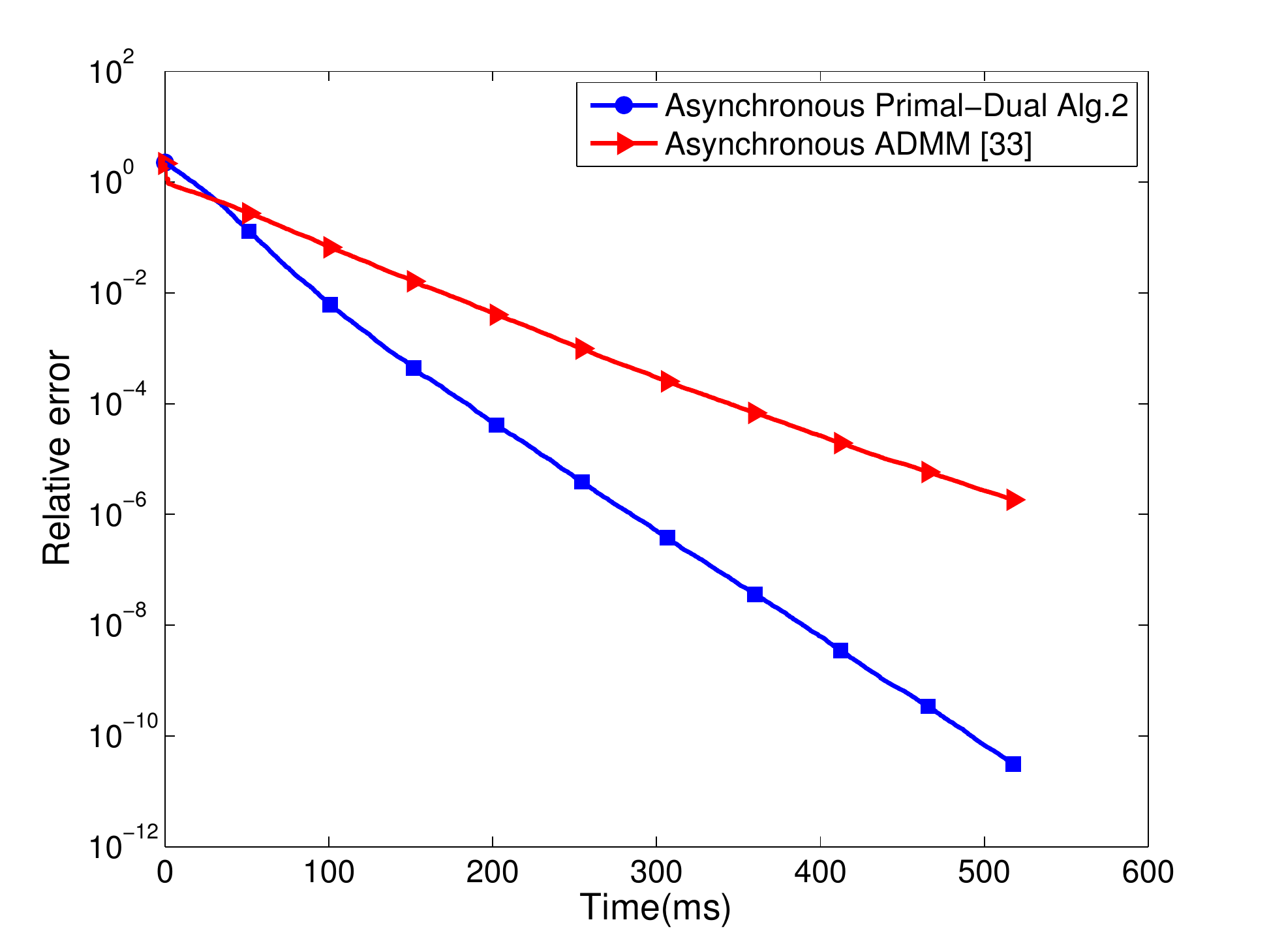}
	\caption{\small Convergence comparison between asynchronous ADMM and the proposed Alg. \protect\ref{alg:asyn} for decentralized geometric median problem.}
	\label{fig:mp_10nodes}
\end{figure}

}
\vspace{-2mm}

\section{Conclusions}
This paper developed an asynchronous, decentralized algorithm for concensus optimization. The agents in this algorithm can compute and communicate in an uncoordinated fashion; local variables are updated with possibly out-of-date information. Mathematically, the developed algorithm extends the existing algorithm PG-EXTRA by adding an explicit dual variable for each edge and to take asynchronous steps. The convergence of our algorithm is established under certain statistical assumptions. Although not all assumptions are satisfied in practice, the algorithm is practical and efficient. In particular, step size parameters are fixed and depend only on local information.

Numerical simulations were performed on both convex and nonconvex problems, and synchronous and asynchronous algorithms were compared. In addition, we introduced an asynchronous algorithm without dual variables by extending an existing algorithm and included it in the simulation. All simulation results clearly show the advantages of the developed asynchronous primal-dual algorithm.

%
%
%
%
\bibliographystyle{IEEEbib}
\bibliography{reference,reference1}

\end{document}